\documentclass[12pt]{article}
\usepackage[final]{epsfig}
\usepackage{graphics}
\usepackage{amsmath}
\usepackage{amsfonts}
\usepackage{latexsym}
\usepackage{amssymb}
\usepackage{graphicx}
\usepackage{url}
\usepackage{epstopdf}
\usepackage{hyperref}
\usepackage{xcolor}
\usepackage{comment}
\usepackage{mathrsfs}

\newtheorem{lemma}{Lemma}[section]
\newtheorem{proposition}[lemma]{Proposition}
\newtheorem{remark}[lemma]{Remark}
\newtheorem{example}[lemma]{Example}
\newtheorem{theorem}{Theorem}
\newtheorem{definition}[lemma]{Definition}
\newtheorem{corollary}[lemma]{Corollary}

\newcommand{\g}{{\gamma}}

\newcommand{\proofend}{$\Box$\bigskip}
\newcommand{\C}{{\mathbb C}}

\newcommand{\R}{{\mathbb R}}
\newcommand{\Z}{{\mathbb Z}}
\newcommand{\RP}{{\mathbb {RP}}}

\def\proof{\paragraph{Proof.}}

\title{Outer symplectic billiards}

\author{Peter Albers
 \and 
Ana Chavez Caliz
\and
Serge Tabachnikov
} 

\date{}

\begin{document}

\maketitle
\begin{abstract}
A submanifold of the standard symplectic space determines a partially defined, multi-valued symplectic map, the outer symplectic billiard correspondence. Two points are in this correspondence if the midpoint of the segment connecting them is on the submanifold, and this segment is symplectically orthogonal to the tangent space of the submanifold at its midpoint. This is a far-reaching generalization of the outer billiard map in the plane; the particular cases, when the submanifold is a closed convex hypersurface or a Lagrangian submanifold, were considered earlier. 

Using a variational approach, we establish the existence of odd-periodic orbits of the outer symplectic billiard correspondence. On the other hand, we give examples of curves in 4-space which do not admit 4-periodic orbits at all. If the submanifold satisfies certain conditions (which are always satisfied if its dimension is at least half of the ambient dimension) we prove the existence of two $n$-reflection orbits connecting two transverse affine Lagrangian subspaces for every $n\geq1$. In addition, for every immersed closed submanifold, the number of single outer symplectic billiard ``shots" from one affine Lagrangian subspace to another is no less than the number of critical points of a smooth function on this submanifold. 

We study, in detail, the behavior of this correspondence when the submanifold is a curve or a Lagrangian submanifold. For Lagrangian submanifolds in 4-dimensional space we present a criterion for the outer symplectic billiard correspondence to be an actual map. We show, in every dimension, that if a Lagrangian submanifold has a cubic generating function, then the outer symplectic billiard correspondence is completely integrable in the Liouville sense.
\end{abstract}

\tableofcontents

\section{Introduction} \label{sect:intro}

Outer billiard is a discrete time dynamical system defined in the exterior of an oriented plane oval (closed smooth strictly convex curve). The outer billiard map is depicted in Figure \ref{fig:out}: the point $z$ is mapped to point the $z'$ if $zz'$ is the positive tangent line of the curve at the midpoint $Q=\frac{z+z'}{2}$ of the segment $zz'$.

\begin{figure}[hbtp] 
\centering
\includegraphics[width=2.7in]{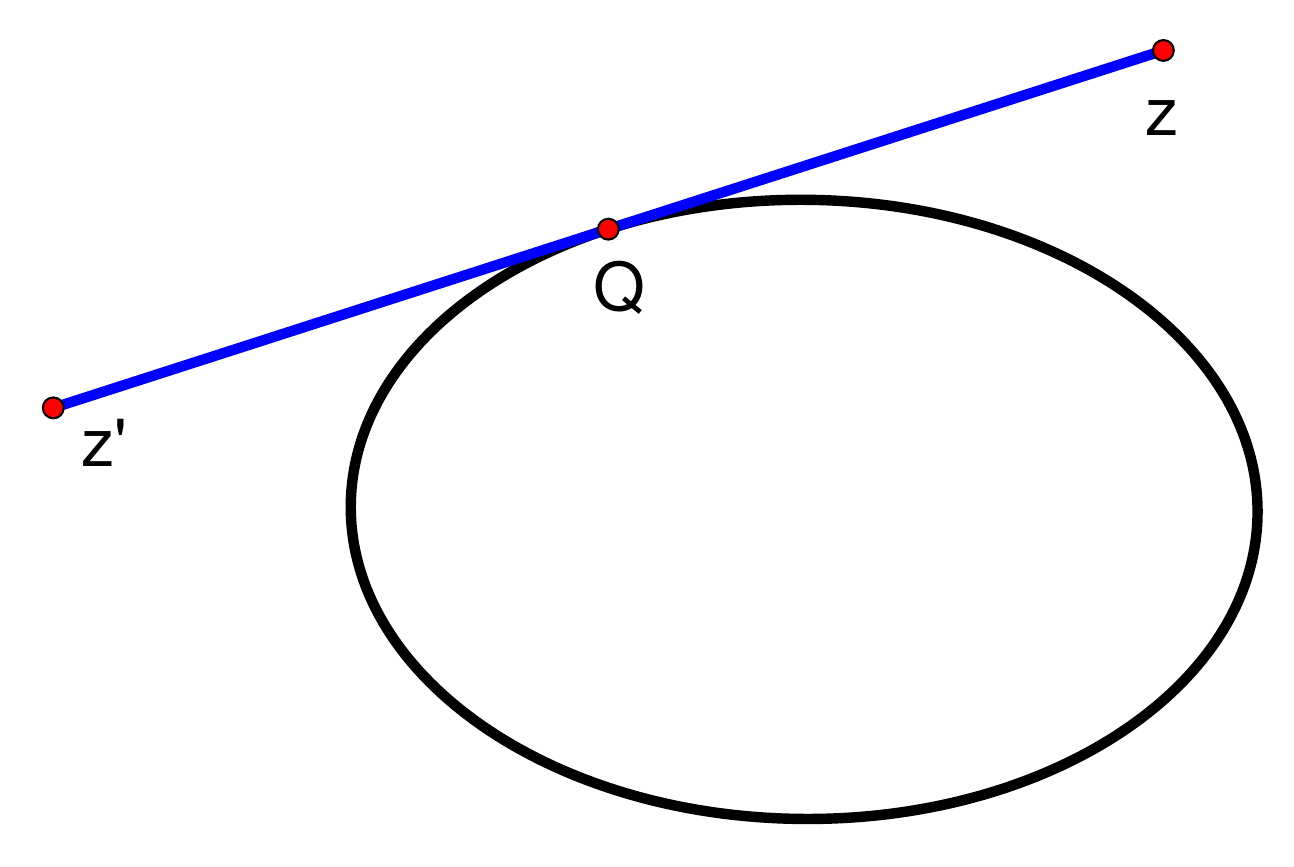}
\caption{Outer symplectic billiard map in $\R^2$.}
\label{fig:out}
\end{figure}

This dynamical system was introduced by B. Neumann \cite{Neu}; its study was put forward by J. Moser \cite{Mo1,Mo2}. In particular, the outer billiard map is area preserving and, using KAM theory, Moser showed that if the curve is smooth enough, then the outer billiard orbits stay bounded and do not escape to infinity.
Outer billiards have become a popular topic of study; see, e.g., \cite{DT} for a survey.  

The plane is a 2-dimensional symplectic space, and the definition of outer billiard was extended to the standard symplectic space in \cite{Ta1} (also see \cite{Ta3}): the outer billiard table is then a closed smooth strictly convex hypersurface, and the role of its tangent lines are played by its characteristic directions. In this way, one obtains a symplectic map of the exterior of the hypersurface. 

A version of this map (really, a multivalued map, that is, a correspondence) was studied in \cite{FT2}, where the ``table" is a Lagrangian submanifold of symplectic space. Our goal in the present paper is to study outer billiards about arbitrary immersed submanifolds in the standard symplectic space. To emphasize the symplectic nature of the problem, we call them {\it outer symplectic billiards}.

The contents of the paper are as follows. 

In Section \ref{sect:corr} we give a definition of outer symplectic billiards about a submanifold $M\subset {\R}^{2d}$ of the standard symplectic space and show that this is a symplectic correspondence.  We illustrate by an example of an ellipsoid: in this case, the correspondence is a completely integrable map. 
By way of motivation, we also provide an informal ``dictionary" from conventional billiards to outer symplectic billiards. 

For odd $n$, we present a variational approach to the study of $n$-periodic orbits of this correspondence: these orbits are represented by critical points of a certain quadratic function on $n$-gons inscribed in $M$.
We also consider outer symplectic billiard trajectories connecting two Lagrangian affine subspaces of $\R^{2d}$ in general position and present a variational approach to this problem as well. 

In Section \ref{subsect:area} we interpret the generating functions in both cases as the symplectic areas bounded by the respective polygonal lines. 

We prove in Theorem \ref{thm:existper} that, for every odd $n \ge 3$, there exists an $n$-periodic orbit of the outer symplectic billiard correspondence. In contrast, we present an example showing that this result does not extend to even values of $n$: in this (counter)example, $M$ is a loop in $\R^4$ and $n=4$.

Theorem \ref{thm:shots} concerns outer symplectic billiard trajectories connecting two Lagrangian affine subspaces in general position. We introduce a certain class of submanifolds $M$ that include all submanifolds 
of $\R^{2d}$ whose dimension is at least $d$. If $M$ belongs to this class then, for every $n\ge 1$, there exist at least two distinct outer symplectic billiard trajectories from one Lagrangian subspace to another. At present, we do not know whether this result extends to all $M \subset \R^{2d}$.

We also show that, for every closed immersed submanifold $M$, the number of single outer billiard ``shots" from one Lagrangian subspace to another is not less than the number of critical points of a smooth function on $M$.

 In Section \ref{subsect:near} we describe the symplectic outer correspondence near a {\it symplectically convex curve}; this class of curves was introduced and studied in \cite{AT}.

Section \ref{sect:lag} is devoted to a detailed study of outer symplectic billiards with respect to Lagrangian submanifolds. In particular, motivated by an example in \cite{FT2}, we study quadratic Lagrangian submanifolds in $\R^4$, given by homogeneous cubic generating functions. For such a submanifold $L^2 \subset \R^4$, we present a criterion for the outer symplectic billiard correspondence to be an actual map, defined everywhere in the complement of $L$ (Proposition \ref{prop:abcd_example_and_wall}).

We also show in Theorem \ref{thm:int} that if the Lagrangian submanifold $L^n \subset \R^{2n}$ has a cubic generating function, then the outer symplectic billiard correspondence is completely integrable: it has $n$ Poisson commuting integrals (this fact is reminiscent of the complete integrability of the usual billiards in quadratic hypersurfaces).


\bigskip

{\bf Acknowledgements}. We are grateful to A. Akopyan for a useful discussion.

PA acknowledges funding by the Deutsche Forschungsgemeinschaft (DFG, German Research Foundation) through Germany's Excellence Strategy EXC-2181/1 - 390900948 (the Heidelberg STRUCTURES Excellence Cluster), the Transregional Colloborative Research Center CRC/TRR 191 (281071066). ACC was funded by the DFG as well through Project-ID 281071066 – TRR 191.
ST was supported by NSF grants DMS-2005444 and DMS-2404535, and by a Mercator fellowship within the CRC/TRR 191. He thanks the Heidelberg University for its invariable hospitality.

We also thank the referee for useful suggestions and comments.

\section{Outer symplectic  billiard correspondence} \label{sect:corr}

\subsection{Definition of the correspondence and motivation} \label{subsect:def}

We will work in the linear symplectic space $V=\R^{2d}$ with its standard symplectic structure $\omega=\sum_{i=1}^d dx_i \wedge dy_i$. Let $M^m \subset V$ be an immersed closed submanifold of dimension $m$.

\begin{definition}
{\rm Two points of $V$, say $z$ and $z'$, are in the outer symplectic billiard correspondence with respect to $M$ if the midpoint $Q=(z+z')/2$ belongs to $M$ and the vector $z'-z$ is symplectically orthogonal to the tangent space $T_Q M$, that is,
\begin{equation}\nonumber
\begin{aligned}
Q=\tfrac12(z+z')&\in M\\
z'-z&\in T^\omega_Q M.
\end{aligned}
\end{equation}
Here, by definition, $\xi\in T^\omega_Q M$ if $\omega(\xi,\zeta)=0$ for all $\zeta\in T_Q M$.
}
\end{definition}



\begin{proposition} \label{pr:corrsymp}
The outer symplectic billiard correspondence is symplectic, i.e.,~its graph
$$
G=\{(z,z')\in V\times V\mid Q=\tfrac12(z+z')\in M,\;z'-z\in T^\omega_Q M\}
$$ 
is a Lagrangian submanifold in $(V \times V,\omega \ominus \omega)$.
\end{proposition}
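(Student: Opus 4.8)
The plan is to parametrize $G$ by the midpoint and half-difference and thereby reduce the Lagrangian condition to an isotropy computation. I would introduce the linear isomorphism $\Psi\colon V\times V\to V\times V$, $\Psi(Q,w)=(Q-w,\,Q+w)$, so that a pair $(z,z')$ corresponds to $z=Q-w$, $z'=Q+w$. In these coordinates the two defining conditions become $Q\in M$ and $w\in T^\omega_Q M$ (since $z'-z=2w$ is symplectically orthogonal to $T_QM$ exactly when $w$ is). Hence $G=\Psi(E)$, where $E=\{(Q,w)\mid Q\in M,\ w\in T^\omega_Q M\}$ is the total space of the vector bundle over $M$ whose fibre is the symplectic orthogonal complement of the tangent space. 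Because $\omega$ is nondegenerate, $\dim T^\omega_Q M=2d-m$ is constant, so $E$ is a smooth manifold of dimension $m+(2d-m)=2d=\tfrac12\dim(V\times V)$; as $\Psi$ is a diffeomorphism, $G$ is an immersed submanifold of the correct half-dimension, and it remains only to prove isotropy.

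Next I would compute the pullback. Writing $\Omega=\omega\ominus\omega$, so that $\Omega\big((a,a'),(b,b')\big)=\omega(a,b)-\omega(a',b')$, and evaluating $\Psi^*\Omega$ on tangent vectors $(\dot Q_1,\dot w_1),(\dot Q_2,\dot w_2)$ of $E$, the diagonal terms $\omega(\dot Q_1,\dot Q_2)$ and $\omega(\dot w_1,\dot w_2)$ cancel and one is left with $\Psi^*\Omega=-2\big(\omega(\dot Q_1,\dot w_2)+\omega(\dot w_1,\dot Q_2)\big)$. It therefore suffices to show that this bilinear expression vanishes on $TE$.

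The main obstacle is that this expression involves the fibre variations $\dot w_i$ rather than $w$ itself, so the defining relation $\omega(w,\zeta)=0$ for $\zeta\in T_QM$ does not apply directly; this is the crux of the argument. I would resolve it by differentiating the constraint along curves in $E$. Choosing a local immersion $q\colon U\to V$ of $M$ with frame $e_i=\partial_i q$, the fibre condition reads $\omega(w,e_i)=0$; differentiating along a curve with velocity $(\dot Q,\dot w)$ and $\dot Q=\sum_i a_i e_i$ gives the linearized constraint $\omega(\dot w,e_i)=-\omega(w,\partial_i\dot q)=-\sum_j a_j\,\omega(w,\partial_i\partial_j q)$. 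Substituting this for each of the two curves into the two cross terms, each becomes $\pm\sum_{i,j}a_i b_j\,\omega(w,\partial_i\partial_j q)$ with opposite signs, and the symmetry $\partial_i\partial_j q=\partial_j\partial_i q$ makes the two sums coincide, so they cancel. Together with the dimension count this shows $G$ is Lagrangian.

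Finally I would note a conceptual repackaging that explains \emph{why} the cancellation must occur. The symplectic isomorphism $V\to V^*$, $v\mapsto\omega(v,\cdot)$, carries $E$ to the conormal bundle $N^*M\subset T^*V$ and carries the form $\omega(\dot Q_1,\dot w_2)+\omega(\dot w_1,\dot Q_2)$ to a multiple of the canonical symplectic form of $T^*V$. Since the conormal bundle of any submanifold is automatically Lagrangian in the cotangent bundle, isotropy follows at once, providing a coordinate-free confirmation of the computation above.
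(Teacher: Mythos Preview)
Your proof is correct. In fact, your final paragraph is precisely the paper's argument: the paper writes down an explicit linear symplectomorphism $(V\times V,\omega\ominus\omega)\to (T^*V,\Omega_{\rm can})$ sending $(z,z')$ to the pair (midpoint, covector $\omega(\tfrac12(z'-z),\cdot)$) up to a harmless rescaling of the fibres, checks that $G$ is carried to the conormal bundle $\mathcal N^*M$, and invokes the fact that conormal bundles are Lagrangian. So what you describe as a ``conceptual repackaging'' is the paper's entire proof.

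The bulk of your proposal---the direct isotropy computation via differentiating the constraint $\omega(w,e_i)=0$ and cancelling cross terms by the symmetry $\partial_i\partial_j q=\partial_j\partial_i q$---is a genuinely different route. It is more hands-on and has the virtue of making the Lagrangian property visible without recognizing the conormal bundle in advance; on the other hand, once the conormal-bundle identification is spotted, your explicit second-derivative computation becomes redundant. Both arguments are clean; the paper's is shorter because it outsources the work to a standard fact about $T^*V$, while yours unpacks that fact in situ.
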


\proof

Let $ (x,y)$ and $(x',y')$ be Darboux coordinates in the two copies of $V$ (so each symbol $x,y$ etc.~denotes a $d$-vector). Then  the symplectic structure in $V\times V$ is
$$
\omega \ominus \omega = d x'\wedge d y' - dx\wedge dy.
$$
Consider now the cotangent bundle $T^* V$ with coordinates $(q,q',p,p')$ (each being a $d$-vector) and the standard symplectic form $\Omega=dq\wedge dp + dq'\wedge dp'$.

One has a linear symplectic isomorphism between $V \times V$ and $T^* V$, given by the formulas
\begin{equation} \label{eq:iso}
q=\frac{x+ x'}{2},\ q'=\frac{y+ y'}{2},\ p=y'-y,\ p'= x-x'.
\end{equation}
We claim that this isomorphism maps $G \subset T^* V$ to the conormal bundle ${\mathcal N}^* M$ of $M \subset V$. 
This will suffice since the conormal bundle of any (immersed) submanifold is an (immersed) Lagrangian submanifold in $T^* V$.

Let $z=(x,y)$ and $z'=(x',y')$ be in the outer symplectic billiard correspondence, i.e.,~$(z,z')\in G$ and, in particular, $Q=(q,q')=\frac12(z+z')\in M$. Let $\xi=(u,v) \in T_Q M$ be any tangent vector, then 
\begin{equation}\nonumber
\begin{aligned}
\omega(\xi,z'-z)&=\omega((u,v),(x'-x,y'-y))\\ 
&= u \cdot (y'-y) - v \cdot (x'-x)\\ 
&= u \cdot p + v \cdot p',
\end{aligned}
\end{equation}
where $\cdot$ denotes the Euclidean inner product on $\R^d$ and the last equality is due to (\ref{eq:iso}). Hence $z'-z\in T^\omega_Q M$ if and only if the covector $P=(p,p')$ annihilates $T_{Q} M$, that is, $(q,q',p,p')$ belongs to ${\mathcal N}^* M$.
\proofend

We emphasize that the outer symplectic billiard correspondence is not a map: in general, it is multi-valued and only partially defined. In the next example, $M \subset \R^{2d}$ is an ellipsoid, and the outer symplectic billiard correspondence is a map (this holds for all closed smooth strictly convex hypersurfaces, see \cite{Ta1}). In the case of an ellipsoid, this map is also completely integrable, as we explain now.  

\begin{example} \label{ex:ellipsoid}
{\rm The normal form of a symplectic ellipsoid $M \subset \R^{2d}$ is given, in canonical coordinates, by
$$
M=\left\{(x,y)\in V\;\Big|\;\sum_{j=1}^d \frac{x_j^2+y_j^2}{a_j} =1\right\}
$$
for some $a_1,\ldots, a_n >0$. The normal direction to $M$ at $(x,y)\in M$ is
$$
N(x,y)=\left(\frac{x_1}{a_1}, \frac{y_1}{a_1},\ldots,\frac{x_n}{a_n}, \frac{y_n}{a_n}\right)\in V
$$
and the characteristic direction is
$$
J N(x,y)=\left(-\frac{y_1}{a_1}, \frac{x_1}{a_1},\ldots,-\frac{y_n}{a_n}, \frac{x_n}{a_n}\right)\in V.
$$
Given a point $(q,p)=(q_1,p_1,\ldots,q_n,p_n)$ in the exterior of $M$, one wants to find $(x,y)\in M$ such that
$$
(q,p)=(x,y)+tJ N(x,y)
$$
that is,
\begin{equation} \label{eq1}
q_j=x_j-t\frac{y_j}{a_j}, p_j=y_j+t\frac{x_j}{a_j},\ j=1,\ldots,d
\end{equation}
for some $t>0$. Once such $(x,y)$ and $t$ are found (the solution is unique due to the assumption $t>0$), one has, for the outer symplectic map $T$,
$$
(\bar q,\bar p):=T(q,p)=(x,y)-tJ N(x,y)
$$
that is,
$$
(\bar q_j,\bar p_j)=\left(x_j+t\frac{y_j}{a_j}, y_j-t\frac{x_j}{a_j}\right),\ j=1,\ldots,d.
$$
I.e., we simply change the sign of $t$ in \eqref{eq1}. From this it follows that
$$
\bar q_j^2+\bar p_j^2 = q_j^2+p_j^2,\ j=1,\ldots,d.
$$
In particular, we find $d$ integrals of the map $T$ which are Poisson-commuting (which is a straightforward computation). Therefore, the orbits of $T$ lie on Lagrangian tori $q_j^2+p_j^2=c_j,\ j=1,\ldots,d$.

Conversely, solving \eqref{eq1} for $x_j,y_j$,
$$
x_j=\frac{q_j+t\frac{p_j}{a_j}}{1+\frac{t^2}{a_j^2}},\ y_j=\frac{p_j-t\frac{q_j}{a_j}}{1+\frac{t^2}{a_j^2}},
$$
we obtain an equation for $t$:
\begin{equation*} 
\sum_{j=1}^n \frac{q_j^2+p_j^2}{a_j\left(1+\frac{t^2}{a_j^2}\right)}=1,
\end{equation*}
which has two roots with opposite signs.
}
\end{example}

\paragraph{Motivation: a loose analogy with the conventional billiards.} Denote by ${\mathcal L}$  the space of oriented lines (rays of light) in $\R^{d+1}$. This space carries a natural symplectic structure coming from symplectic reduction. However, fixing an origin and using the Euclidean structure of $\R^{d+1}$ naturally identifies ${\mathcal L}$ with $T^* S^d$, the cotangent bundle of the unit sphere. The subset of lines through a fixed point is a Lagrangian sphere in ${\mathcal L}$. In fact, the space of lines through the origin is the zero-section in $T^*S^d$.

More precisely, an oriented line is defined by its unit directing vector $q \in S^d$ and the vector $p$ dropped from the origin to the line. Since $p \perp q$, one can think of $p$ as a (co)tangent vector to $S^d$ at point $q$ (one identifies vectors and covectors by the Euclidean structure). 

We consider a smooth hypersurface $X^d \subset \R^{d+1}$ which locally can be expressed as a regular level set of a function with positive definite Hessian, in fact, a germ of such a hypersurface would be enough. The space of oriented lines which are tangent to $X$  naturally defines a hypersurface $M^{2d-1} \subset {\mathcal L}$. 

We recall that the characteristic direction of the hypersurface $M$ at a point $\ell\in M$ is, by definition, the 1-dimensional space $\ker \omega|_{T_\ell M}\subset T_\ell M$. Let us describe a generator $\xi=\xi(\ell)$ of this subspace geometrically in terms of $X$. For that, we denote by $x\in X$ the point at which the line $\ell\in M$ is tangent to $X$ and by $n=n(x)$ the normal to $X$ in $x$. 

We claim that an infinitesimal rotation of $\ell$ about the point $x$ in the plane spanned by $\ell$ and $n$ defines a tangent vector $\xi=\xi(\ell) \in T_\ell M$. Indeed, using the local convexity property of $X$, any small rotation of $\ell$ in this plane are actually (up to second order) lines tangent to $X$ at points close to $x$. Thus, $\xi(\ell)$ can be described as a tangent to an actual path in $M$ through $\ell$. An easy way of seeing this is to approximate $X$ (up to second order) by the corresponding quadratic hypersurface.
 
To show that $\xi$ is symplectically orthogonal to $T_\ell M \subset T_\ell {\mathcal L}$ we may choose the point $x$ as the origin in $\R^{d+1}$. Now, let $\eta \in T_\ell M$ be a test vector. In canonical $(q,p)$-coordinates on $\mathcal L$ the vector $\xi$ is proportional to $(n,0)$, since the $q$-part of the coordinates describes the direction of a line and the $p$-part the distance to the origin. Similarly, the $p$-component of $\eta$ lies in $T_x X$. Hence $(dq \wedge dp) (\xi,\eta)=0$, as claimed. 

In fact, the above reasoning shows that a curve in $M$ given by all tangent lines to a fixed geodesic in $X$ forms a characteristic in $M$. A more detailed exposition can be found in the lectures by J.~Moser \cite{Mo3} who uses this approach to give an elegant proof of the integrability of the geodesic flow on ellipsoids.

Next, we consider an instance of a billiard reflection in the hypersurface $X$. That is, assume that an oriented line $a$ meets $X$ at the point $x$ and is reflected to the oriented line $b$. Assuming that these lines are not orthogonal to $X$ in $x$, there is a unique line $\ell$, tangent to $X$ at point $x$, such that $a$ and $b$ are obtained from $\ell$ by rotating the same angle in the opposite directions in the plane spanned by $\ell$ and $n$. 

We now make a loose translation to the set-up of the outer symplectic billiards. The space of rays ${\mathcal L}$ becomes linear symplectic space $\R^{2d}$, and the hypersurface $M \subset {\mathcal L}$ becomes a hypersurface $M \subset \R^{2d}$. The line $\ell$ becomes a point of this $M$, and the vector $\xi$ becomes a characteristic vector to $M$ at this point. Finally, the incoming and outgoing rays $a$ and $b$ become two points on the respective characteristic line at equal distance from the tangency point of this line with $M$ (here we take the liberty of translating the equal angles condition as the equal distance one). That is, we obtain a description of the outer symplectic billiard correspondence.  

We note that the billiard reflection, considered as a transformation of the space of oriented lines, is symplectic. The above Proposition \ref{pr:corrsymp} states that the outer symplectic billiard correspondence is symplectic as well, a kind of ``sanity check" for this ``dictionary".
 
In the same spirit, the set of lines through a fixed point becomes an affine Lagrangian subspace in $\R^{2d}$. Hence, a billiard shot from point $A_1$ to point $A_2$ with $n$ reflections off $X$ translates as an $n$-link outer symplectic billiard orbit from an affine Lagrangian subspace $L_1$ to an affine Lagrangian subspace $L_2$. In what follows, we shall consider such orbits, along with $n$-periodic orbits. 

Let us also mention the case when one literally has a ``duality" between inner and outer billiards. This happens when both systems are considered in the 2-dimensional spherical geometry. In this case, one has the spherical duality between points and (oriented) great circles. This duality conjugates inner and outer billiards, exchanging length and area. See \cite{Ta1,Ta} for a detailed discussion. 

\subsection{Two Lagrangian subspaces and their generating functions} \label{subsect:gen}

We wish to consider two scenarios one might call closed and open strings. In the first one, we search for periodic orbits of the outer symplectic billiard correspondence, and in the second, orbits start and end on two affine Lagrangian subspaces. In the latter case, we assume that the Lagrangian subspaces are in general position, that is, they do not contain parallel lines. 

In this section we describe the relevant Lagrangian submanifolds and their generating functions, similar to Proposition \ref{pr:corrsymp}. In the next section, we use these functions to describe orbits of the outer symplectic correspondence. \\

Let us start with the $n$-periodic case. We argue in the spirit of Chaperon's ``g\' eod\' esiques bris\' ees'' approach to fixed points of symplectic maps \cite{Ch}, as it is presented in  \cite{Gi}; see \cite{FT1,Ta1} for the case of outer symplectic billiards. 

Under the linear symplectic isomorphism from the proof of Proposition \ref{pr:corrsymp}, 
$$
(V\times V)^{\times n} \cong (T^* V)^{\times n} = T^* (V^{\times n})
$$
consider the linear subspace  $\mathscr{L}\subset (V\times V)^{\times n}\cong T^* (V^{\times n})$ given by the equations
$
z'_i = z_{i+1},\ i=1,\ldots,n,
$
that is,
\begin{equation}\nonumber
\begin{aligned}
\mathscr{L}=\big\{(z_1,z_1',\ldots,z_n,z_n')\in (V\times V)^{\times n} \mid z'_i = z_{i+1},\; i=1,\ldots,n\big\}
\end{aligned}
\end{equation} 
where we read indices cyclically, i.e., $z_{n+1}\equiv z_1$. 

\begin{lemma} \label{lm:headtail}
The space ${\mathscr{L}}\subset T^* (V^{\times n})$ is Lagrangian. If $n$ is odd, then it has the quadratic generating function
\begin{equation} \label{eq:genfun}
F(Q_1,\ldots,Q_n) = 2 \sum_{1\le i < j \le n} (-1)^{i+j-1} \omega(Q_i,Q_j),
\end{equation}
where $Q_i = (q_i, q_i')=\frac{1}{2}(z_i+z_i')$.\\
If $n$ is even, then $\mathscr{L}$ is not given by a generating function.
\end{lemma}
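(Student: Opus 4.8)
The plan is to treat the three assertions—that $\mathscr{L}$ is Lagrangian, that it carries the stated generating function when $n$ is odd, and that no generating function exists when $n$ is even—somewhat separately, since only the first holds uniformly in the parity of $n$. First I would settle the dimension and isotropy. The subspace $\mathscr{L}$ is cut out by the cyclic identifications $z_i'=z_{i+1}$, so it is faithfully parametrized by $(z_1,\dots,z_n)\in V^{\times n}$ and has dimension $2dn=\tfrac12\dim T^*(V^{\times n})$. To see it is isotropic I would evaluate the ambient form $\bigoplus_{i=1}^n(\omega\ominus\omega)$ on two tangent vectors $\zeta,\eta$ to $\mathscr{L}$, whose components satisfy $\zeta_i'=\zeta_{i+1}$ and $\eta_i'=\eta_{i+1}$. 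This gives $\sum_{i=1}^n\big(\omega(\zeta_{i+1},\eta_{i+1})-\omega(\zeta_i,\eta_i)\big)$, which telescopes to $0$ because the indices are read cyclically. Being isotropic and half-dimensional, $\mathscr{L}$ is Lagrangian for every $n$.

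The odd/even dichotomy then reduces to one linear-algebra question: whether $\mathscr{L}$ is a graph over the base $V^{\times n}$, i.e. whether the averaging map $A\colon(z_i)\mapsto(Q_i)$, $Q_i=\tfrac12(z_i+z_{i+1})$, is invertible. Writing $A=\tfrac12(\mathrm{Id}+S)$ with $S$ the cyclic shift, its eigenvalues are $\tfrac12(1+\zeta^k)$ over the $n$-th roots of unity; these are all nonzero exactly when $-1$ is not an $n$-th root of unity, that is, exactly when $n$ is odd.

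For odd $n$ I would invert $A$, telescoping the relations $z_{i+1}=2Q_i-z_i$ around the cycle and using $(-1)^n=-1$ to solve for a base point, so that each $z_i$ becomes an alternating combination of the $Q_j$'s. Under the isomorphism of Proposition \ref{pr:corrsymp} the fibre covector of the $i$-th factor is $P_i=\omega(\,\cdot\,,z_{i+1}-z_i)$, so it suffices to check that $z_{k+1}-z_k$ equals the vector $W_k$ determined by $\partial F/\partial Q_k=\omega(\,\cdot\,,W_k)$ for the stated $F$. A useful sanity check before computing is Euler's relation: since $F$ is quadratic, on $\mathscr{L}$ one must have $F=\tfrac12\sum_i P_i(Q_i)=\tfrac12\sum_i\omega(z_i,z_{i+1})$, the symplectic area that reappears in Section \ref{subsect:area}. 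The main obstacle is the bookkeeping of the signs $(-1)^{i+j-1}$: after substituting the inverted expressions one must match, for each $m$, the coefficient of $Q_m$ in $z_{k+1}-z_k$ with that in $W_k$, separating the cases $m<k$, $m=k$ (where it must cancel to $0$), and $m>k$. I expect essentially all the genuine work to sit here.

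Finally, for even $n$ I would show directly that $\mathscr{L}$ is no graph of any $F(Q_1,\dots,Q_n)$ by exhibiting a nonzero vertical vector. The element $z_i=(-1)^i c$ (any $c\in V$) lies in $\mathscr{L}$—it respects $z_{n+1}=z_1$ precisely because $n$ is even—and has all $Q_i=\tfrac12(z_i+z_{i+1})=0$ while its momenta $z_{i+1}-z_i=2(-1)^{i+1}c$ are nonzero. Hence $\mathscr{L}$ meets the fibre $\{Q=0\}$ nontrivially, the projection to the base fails to be an isomorphism, and no generating function in the base coordinates can exist. This vector is exactly the kernel of $A$ found in the eigenvalue computation, so the two halves of the argument dovetail.
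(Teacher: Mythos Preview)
Your plan is sound and will go through. The paper's proof is organized a little differently, and the comparison is worth noting. Rather than inverting the averaging map $A=\tfrac12(\mathrm{Id}+S)$ in the $z$-variables and then computing $z_{k+1}-z_k$, the paper stays entirely in the $(q,q',p,p')$-coordinates of the isomorphism from Proposition~\ref{pr:corrsymp}: the cyclic constraints $z_i'=z_{i+1}$ become the linear system $p_i+p_{i+1}=2q'_{i+1}-2q'_i$, $p_i'+p_{i+1}'=2q_i-2q_{i+1}$, and taking the alternating sum over $i$ (which closes up precisely when $n$ is odd) solves for each $p_i,p_i'$ directly as an alternating sum of the $q_j,q_j'$. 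One then checks these are the partials of $F$. This sidesteps your two-step procedure (invert $A$, then differentiate) and avoids the sign bookkeeping you flagged as the main obstacle; the price is that the argument is coordinate-bound and less conceptual. Your eigenvalue description of $\ker A$ and the Euler-relation sanity check linking $F$ to the symplectic area $\tfrac12\sum_i\omega(z_i,z_{i+1})$ are genuine additions---the latter is exactly what the paper establishes separately in Lemma~\ref{lm:revareas}. For the even case, the paper reaches the same conclusion by observing that the alternating sum forces the base constraint $\sum_i(-1)^iQ_i=0$, which is the image-side view of the same kernel vector $z_i=(-1)^ic$ that you exhibit.
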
 

\proof
That ${\mathscr{L}} \subset (V\times V)^{\times n}$ is Lagrangian follows from the fact that the symplectic structure on $(V \times V)^{\times n}$ is $(\omega \ominus \omega)^{\oplus n}$: when evaluating this symplectic product, the equations $z'_i = z_{i+1}$ force all the terms in this long sum to cancel pairwise since we read indices cyclically.

Now let $n$ be odd.
Using (\ref{eq:iso}), we rewrite the system of linear equations $z'_i =(x_i',y_i')=(x_{i+1},y_{i+1})= z_{i+1}$, under the linear symplectic isomorphism from Proposition \ref{pr:corrsymp} as
\begin{equation} \label{eq:newsys}
p_i+p_{i+1}=2q_{i+1}'-2q_i',\quad p_i'+p_{i+1}'=2q_i - 2q_{i+1}, \ i=1,\ldots,n.
\end{equation}
Indices are again cyclic mod $n$. 

Let us treat the variables $p,p'$ as unknowns. Then, taking the alternating sum of the equations (\ref{eq:newsys}) yields the (unique) solution
$$
p_i=2 \sum_{j=1}^{n-1} (-1)^{j-1} q_{i+j}',\quad p_i'=2 \sum_{j=1}^{n-1} (-1)^{j} q_{i+j},\ i=1,\ldots,n.
$$
This uses the fact that $n$ is odd. Then rewriting 
\begin{equation}\nonumber
\begin{aligned}
F(Q_1,\ldots,Q_n) &= 2 \sum_{1\le i < j \le n} (-1)^{i+j-1} \omega(Q_i,Q_j)\\
&=2 \sum_{1\le i < j \le n} (-1)^{i+j-1} (q_i \cdot q_j' - q_i' \cdot q_j),
\end{aligned}
\end{equation}
together with a direct calculation shows that
$$
p_i=\frac{\partial F}{\partial q_i},\ p_i'=\frac{\partial F}{\partial q_i'}\;.
$$
Therefore, ${\mathscr{L}} \subset T^* (V^{\times n})$ is the graph of $dF$, i.e., is a Lagrangian submanifold with  generating function $F$.

If $n$ is even, then taking the alternating sum of the equations (\ref{eq:newsys}) shows that the condition $\sum_i (-1)^i Q_i =0$ needs to be satisfied. Thus, if this condition is satisfied, then the Lagrangian subspace ${\mathscr{L}} \subset T^* (V^{\times n})$ contains vertical directions: its projection on the base has codimension $2d$. In particular, ${\mathscr{L}}$ is not a graph of the differential of a function.
\proofend

Let us now consider the case when the boundary conditions are two affine Lagrangian subspaces $L_1$ and $L_2$ in general position. Since the outer symplectic billiard correspondence is invariant under affine symplectic transformations, we may assume that $L_1$ is the coordinate $x$-subspace and $L_2$ is the coordinate $y$-subspace of the standard linear symplectic space $V=\R^d_x \oplus \R^d_y$.

Take then the submanifold  $\mathscr{K} \subset (V\times V)^{\times n}=T^* (V^{\times n})$ given by the equations
$$
z_i'=z_{i+1},\; i=1,\ldots,n-1,\quad {\rm and}\quad z_1=(x_1,0)\in L_1,\; z_n'=(0,y_n')\in L_2,
$$
that is,
\begin{equation}\nonumber
\mathscr{K}=\left\{(z_1,z_1',\ldots,z_n,z_n')\in (V\times V)^{\times n}\; \bigg| \; 
\begin{aligned}
& z'_i = z_{i+1},\; i=1,\ldots,n-1\\ 
& z_1\in L_1, z_n'\in L_2
\end{aligned}
\right\}.
\end{equation}
We have an analog of Lemma \ref{lm:headtail}.

\begin{lemma} \label{lm:bdry}
The space $\mathscr{K}$ is Lagrangian; it is given by the generating function
\begin{equation} \label{eq:genfun1}
G(Q_1,\ldots,Q_n) = 2 \sum_{i=1}^n q_i \cdot q_i'+ 4 \sum_{1\le i < j \le n} (-1)^{j-i} q_j \cdot q_i'.
\end{equation}
\end{lemma}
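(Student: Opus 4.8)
The plan is to repeat, almost verbatim, the strategy of Lemma~\ref{lm:headtail}: transport $\mathscr{K}$ to the cotangent bundle $T^*(V^{\times n})$ by applying the isomorphism \eqref{eq:iso} in each factor, rewrite the defining constraints in the coordinates $(Q,P)=(q_i,q_i',p_i,p_i')$, and exhibit $\mathscr{K}$ as the graph of $dG$ over the base $V^{\times n}$. Since the graph of an exact one-form is automatically Lagrangian, once the generating function is matched there is nothing further to check; a dimension count ($\dim\mathscr{K}=2dn$, coming from the $(n-1)$ vector gluing conditions plus the two half-dimensional boundary conditions) serves as a consistency check that the correct codimension has been imposed.

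First I would translate the constraints. The gluing equations $z_i'=z_{i+1}$ give, exactly as in \eqref{eq:newsys} but now only for $i=1,\dots,n-1$ (the chain is no longer cyclic), the relations $p_i+p_{i+1}=2(q_{i+1}'-q_i')$ and $p_i'+p_{i+1}'=2(q_i-q_{i+1})$. The boundary condition $z_1\in L_1$, i.e.\ $y_1=0$, becomes $p_1=2q_1'$, while $z_n'\in L_2$, i.e.\ $x_n'=0$, becomes $p_n'=2q_n$; these two equations take the place of the $i=n$ (wrap-around) case of \eqref{eq:newsys}. This is the crucial structural point and the contrast with Lemma~\ref{lm:headtail}: the open boundary conditions break the cyclic symmetry, so the linear system for the momenta is now triangular rather than cyclic and is uniquely solvable for $P$ in terms of $Q$ for every $n$, with no parity obstruction. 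In particular $\mathscr{K}$ projects diffeomorphically onto the base and is a genuine graph.

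Next I would solve the triangular system by recursion. Starting from $p_1=2q_1'$ and propagating forward through $p_i+p_{i+1}=2(q_{i+1}'-q_i')$ yields $p_i=2q_i'+4\sum_{k=1}^{i-1}(-1)^{i-k}q_k'$; starting from $p_n'=2q_n$ and propagating backward through $p_i'+p_{i+1}'=2(q_i-q_{i+1})$ yields $p_i'=2q_i+4\sum_{k=i+1}^{n}(-1)^{k-i}q_k$. The induction step is the elementary telescoping of adjacent recursion terms due to the alternating signs. Finally I would differentiate the proposed $G$ and match: $\partial G/\partial q_i'$ picks up $2q_i$ from the first sum together with $4\sum_{j>i}(-1)^{j-i}q_j$ from the terms of the double sum whose inner index equals $i$, reproducing $p_i'$; and $\partial G/\partial q_i$ picks up $2q_i'$ together with $4\sum_{j<i}(-1)^{i-j}q_j'$, reproducing $p_i$.

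I expect the only real work to lie in this last step, namely organising the double sum in $G$ so that the partial derivatives fall out with the correct signs and summation ranges. The conceptual content is otherwise slight: the point worth emphasizing is precisely that imposing transverse affine Lagrangian boundary conditions at the two ends destroys the cyclic wrap-around, and thereby guarantees a bona fide generating function regardless of the parity of $n$, in contrast with the periodic situation of Lemma~\ref{lm:headtail}.
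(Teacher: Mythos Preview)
Your proposal is correct and follows essentially the same approach as the paper's proof: translate the defining constraints via the isomorphism \eqref{eq:iso}, solve the resulting triangular linear system for $(p_i,p_i')$ in terms of $(q_i,q_i')$, and verify that the solutions coincide with the partial derivatives of $G$. Your explicit observation that the boundary conditions break the cyclic wrap-around and yield a triangular (hence uniquely solvable) system for every $n$ is exactly the content of the remark the paper inserts after the lemma.
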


\begin{remark}
{\rm
We point out that in this case the indices are not read cyclically and there is no distinction between even and odd $n$.
}
\end{remark}

\proof
Using again the linear symplectic isomorphism \eqref{eq:iso}, we rewrite our linear equations as
\begin{equation}\nonumber
\begin{aligned}
p_i+p_{i+1}&=2q_{i+1}'-2q_i', \quad &i=1,\ldots,n-1,\\ 
p_i'+p_{i+1}'&=2q_i - 2q_{i+1}, \quad &i=1,\ldots,n-1,\\[.5ex]
 p_1&=2q_1'\\
  p_n'&=2q_n.
\end{aligned}
\end{equation}
The solutions of this system are
\begin{equation}\nonumber
\begin{aligned}
p_i &= 2q_i' + 4 \sum_{1\le j<i} (-1)^{i-j} q_j',\\[.5ex]
p_i' &=2q_i + 4 \sum_{i<j\le n} (-1)^{j-i} q_j,
\end{aligned}
\end{equation}
where we point out the different range of indices under the sums. Again, a direct calculation shows that
$$
p_i=\frac{\partial G}{\partial q_i},\ p_i'=\frac{\partial G}{\partial q_i'}.
$$
Therefore, $\mathscr{K} \subset T^* (V^{\times n})$ is a Lagrangian submanifold having $G$ as  generating function.
\proofend


\subsection{Outer symplectic billiard  orbits} \label{subsect:per}

In this section, we begin to study periodic orbits of the outer symplectic billiard correspondence in $V$, as well as its orbits connecting two Lagrangian subspaces $L_1$, $L_2$ in general position. Let $M \subset V$ be an immersed closed submanifold. 

\begin{definition}\label{def:periodic_orbit_and_degenerate}
{\rm An $n$-periodic orbit of the outer symplectic billiard correspondence is an $n$-gon $(z_1,z_2,\ldots,z_n)$ in $V$ such that $z_i$ and $z_{i+1}$ are in outer symplectic billiard correspondence with respect to $M$ for each $i=1,\ldots, n$. Here, we read indices cyclically.

Likewise, an $n$-link outer symplectic billiard orbit connecting the Lagrangian subspaces $L_1$ and $L_2$ is a polygonal line  $z_1,z_2,\ldots,z_n,z_{n+1}$ in $V$ such that $z_i$ and $z_{i+1}$ are in outer symplectic billiard correspondence with respect to $M$ for each $i=1,\ldots, n$, and $z_1 \in L_1$, $z_{n+1} \in L_2$. 

Either type of orbit is called degenerate if, for some $i$, one has $z_{i-1}=z_{i+1}$ (that is, the polygon ``backtracks''), and non-degenerate otherwise.}
\end{definition}

If we set, as before,
\begin{equation} \label{eq:midpt}
Q_i = \tfrac12 (z_i + z_{i+1})\in V
\end{equation}
then backtracking is equivalent to $Q_{i-1}=Q_i$. We denote by ${\bf Q}$ the $n$-gon $(Q_1,\ldots,Q_n)\in V^{\times n}$.

Combining the results of the preceding two sections, we have

\begin{theorem} \label{thm:crit}$ $
\begin{enumerate}  
\item Let $n$ be odd. An $n$-gon $\mathbf{Z}=(z_1,z_2,\ldots,z_n)$ is an $n$-periodic orbit of the outer symplectic billiard correspondence if and only if the $n$-gon ${\bf Q}$ is inscribed in $M$ and is a critical point of the function $F$ restricted to $M^{\times n}\subset V^{\times n}$ given by the expression \eqref{eq:genfun}.  
\item A polygonal line $\mathbf{Z}=(z_1,z_2,\ldots,z_n,z_{n+1})$ is an $n$-link outer symplectic billiard orbit connecting the Lagrangian subspaces $L_1$ and $L_2$ if and only if the $n$-gon ${\bf Q}$ is inscribed in $M$ and is a critical point of the function $G$ restricted to $M^{\times n}\subset V^{\times n}$ given by \eqref{eq:genfun1}.
\end{enumerate}
\end{theorem}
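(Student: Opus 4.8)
The plan is to recognize both statements as manifestations of one Lagrangian-intersection principle, and to assemble them out of the structural results already established. The principle I would isolate first is elementary: for a manifold $B$, a submanifold $N\subset B$, and a smooth function $\Phi\colon B\to\R$, a point $(q,d_q\Phi)$ of the graph of $d\Phi$ lies in the conormal bundle ${\mathcal N}^* N\subset T^*B$ if and only if $q\in N$ and $d_q\Phi$ annihilates $T_qN$; the latter is exactly the condition that $q$ be a critical point of the restriction $\Phi|_N$. Thus $\mathrm{graph}(d\Phi)\cap {\mathcal N}^*N$ projects bijectively onto the critical set of $\Phi|_N$. This is immediate from the definitions of the two objects.

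Next I would put the dynamics into the correct ambient space. An $n$-periodic orbit $\mathbf{Z}=(z_1,\ldots,z_n)$ is the same datum as a tuple of pairs $(z_i,z_i')\in V\times V$, each lying in the graph $G$ of the correspondence, subject to the cyclic gluing $z_i'=z_{i+1}$. Hence periodic orbits are precisely the points of $G^{\times n}\cap\mathscr{L}$ inside $(V\times V)^{\times n}$. I would then transport everything through the factorwise linear symplectic isomorphism \eqref{eq:iso} into $T^*(V^{\times n})$. By Proposition \ref{pr:corrsymp} applied in each factor, $G^{\times n}$ becomes $({\mathcal N}^*M)^{\times n}={\mathcal N}^*(M^{\times n})$, the conormal bundle of the product (the conormal fiber of a product being the direct sum of the conormal fibers, matching the annihilator of $\bigoplus_i T_{Q_i}M$). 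For odd $n$, Lemma \ref{lm:headtail} identifies $\mathscr{L}$ with the graph of $dF$. Applying the principle above with $B=V^{\times n}$, $N=M^{\times n}$, $\Phi=F$ then identifies $G^{\times n}\cap\mathscr{L}$ with the critical points of $F|_{M^{\times n}}$.

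The step that ties the statement together is the observation that the base coordinates of the transported point are exactly the midpoints. From \eqref{eq:iso} one has $(q_i,q_i')=\tfrac12(z_i+z_i')$, and the gluing $z_i'=z_{i+1}$ turns this into $(q_i,q_i')=\tfrac12(z_i+z_{i+1})=Q_i$. Therefore the base point of the intersection is precisely $\mathbf{Q}=(Q_1,\ldots,Q_n)$, the condition $q\in N$ reads ``$\mathbf{Q}$ is inscribed in $M$'', and criticality of $F|_{M^{\times n}}$ at $\mathbf{Q}$ is exactly the assertion of Part 1. For Part 2 I would run the identical argument with $\mathscr{L}$ replaced by $\mathscr{K}$: an $n$-link orbit connecting $L_1$ and $L_2$ is a tuple in $G^{\times n}\cap\mathscr{K}$, where the gluing is now the non-cyclic $z_i'=z_{i+1}$ for $i<n$ together with the endpoint conditions $z_1\in L_1$, $z_n'\in L_2$; again $G^{\times n}$ becomes ${\mathcal N}^*(M^{\times n})$, while Lemma \ref{lm:bdry} identifies $\mathscr{K}$ with the graph of $dG$ with no parity restriction. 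The same midpoint computation and intersection principle then finish the proof.

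Since the heavy lifting is already done in Proposition \ref{pr:corrsymp} and Lemmas \ref{lm:headtail}--\ref{lm:bdry}, I do not anticipate a genuine obstacle; the only care required is bookkeeping. The two points to check cleanly are that the factorwise isomorphism really carries the product graph $G^{\times n}$ to the conormal bundle of the product $M^{\times n}$ (so that its fibers are the annihilators of $T(M^{\times n})$), and that no difficulty arises from the correspondence being multivalued or only partially defined — it does not, because the argument never inverts the correspondence but only intersects the honest Lagrangian submanifolds $G^{\times n}$ and $\mathscr{L}$ (respectively $\mathscr{K}$). Degenerate (backtracking) orbits, characterized by $Q_{i-1}=Q_i$, appear among the critical points without affecting the equivalence.
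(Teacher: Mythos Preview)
Your proposal is correct and follows essentially the same approach as the paper's own proof: both identify the orbits with the intersection ${\mathcal N}^*(M^{\times n})\cap\mathscr{L}$ (resp.\ $\cap\,\mathscr{K}$) via Proposition~\ref{pr:corrsymp}, and then invoke Lemmas~\ref{lm:headtail}--\ref{lm:bdry} to realize $\mathscr{L}$ and $\mathscr{K}$ as graphs of $dF$ and $dG$. Your write-up is more detailed, spelling out explicitly the Lagrangian-intersection principle $\mathrm{graph}(d\Phi)\cap{\mathcal N}^*N\leftrightarrow\mathrm{Crit}(\Phi|_N)$ and the midpoint bookkeeping that the paper leaves implicit, but the underlying argument is the same.
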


\proof
The argument of the proof of Proposition \ref{pr:corrsymp} shows that periodic orbits and $n$-links correspond to intersection points between the conormal bundle ${\mathcal N}^* (M^{\times n})  \subset T^* (V^{\times n})$ and the Lagrangian subspaces $\mathscr{L}$ and $\mathscr{K}$, respectively. Lemma \ref{lm:headtail} (and similarly Lemma \ref{lm:bdry}) implies that these intersection points are precisely the critical points of the function $F$ (and similarly $G$) restricted to $M^{\times n}$. 
\proofend

Theorem \ref{thm:crit} has the following immediate consequence.

\begin{corollary} \label{cor:oneshot}
The number of 1-link outer symplectic billiard orbits connecting the Lagrangian subspaces $L_1$ and $L_2$ is greater than or equal to the number of critical points of a smooth function on $M$ (in particular, not less that the Lusternik-Schnirelmann category of $M$, and if $M$ is generic, not less than its sum of Betti numbers).
\end{corollary}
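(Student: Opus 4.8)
The plan is to specialize Theorem~\ref{thm:crit}(2) to the single-link case $n=1$ and then feed the outcome into standard critical-point theory. First I would substitute $n=1$ into the generating function~\eqref{eq:genfun1}. The first sum contributes $2\,q_1\cdot q_1'$, while the second sum ranges over pairs $1\le i<j\le 1$ and is therefore empty. Hence, writing $Q=(q,q')\in V=\R^d_x\oplus\R^d_y$ for the unique midpoint, and recalling that $L_1,L_2$ have been normalized to the coordinate subspaces $\R^d_x,\R^d_y$, the relevant function collapses to the single quadratic
$$
G(Q)=2\,q\cdot q'.
$$
This is a smooth (indeed quadratic) function on $V$, so its restriction $G|_M$ is a smooth function on the closed manifold $M$. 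Since Theorem~\ref{thm:crit}(2) is an ``if and only if'' statement, it provides a bijection between the $1$-link orbits from $L_1$ to $L_2$ and the critical points of $G|_M$.

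Thus the number of $1$-link orbits equals the number of critical points of this one explicit smooth function, and the remaining input is purely topological. By the Lusternik--Schnirelmann theorem, every smooth function on a closed manifold has at least $\mathrm{cat}(M)$ critical points, which yields the first asserted bound with no further hypothesis on $M$. For the stronger Betti-number bound one needs $G|_M$ to be a Morse function; here I would invoke the ``generic'' hypothesis, arguing by a transversality/Sard-type argument that for $M$ in general position with respect to $L_1,L_2$ (equivalently, after a generic affine symplectic normalization of the two subspaces) the quadratic $q\cdot q'$ restricts to a Morse function on $M$. The Morse inequalities then bound the number of critical points below by $\sum_i \dim H_i(M;\R)$, the sum of the Betti numbers.

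I do not anticipate a genuine obstacle: all the dynamical content is already packaged in Theorem~\ref{thm:crit}, and the corollary amounts to the observation that for $n=1$ the generating function degenerates to a fixed smooth function whose critical points obey the usual Lusternik--Schnirelmann and Morse lower bounds. The only mildly delicate point will be justifying the genericity needed for the Morse count, which is a routine transversality statement and not an essential difficulty.
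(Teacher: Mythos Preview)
Your proposal is correct and follows essentially the same approach as the paper: specialize Theorem~\ref{thm:crit}(2) to $n=1$, observe that the generating function collapses to $G(Q)=2\,q\cdot q'$ (the paper drops the harmless factor $2$), and invoke standard critical-point bounds. The paper's proof adds only the remark that backtracking cannot occur when $n=1$, so all orbits are automatically non-degenerate; otherwise the two arguments are identical.
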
 

\proof
In the case when $n=1$, the matter of backtracking cannot occur, and thus all orbits are nondegenerate, and they correspond to the critical points of the restriction of the function $G(Q)= q \cdot q'$ to $M$. 
\proofend

\begin{example} \label{ex:all}
{\rm 
The function $G(Q)=q \cdot q'$ may vanish identically on $M$. In this case, every point of $M$ is the midpoint of a 1-link outer symplectic billiard orbit connecting Lagrangian subspaces $L_1$ and $L_2$. Here is an example of such $M^2 \subset \R^4$, a symplectic torus, parameterized by two angles $\alpha$ and $\beta$:
$$
q_1=\cos\alpha\cos\beta,\ q_1'=\sin\alpha\sin\beta,\ q_2=\sin\alpha\cos\beta,\ q_2'=-\cos\alpha\sin\beta.
$$
}
\end{example}

\subsection{Symplectic meaning of the generating functions} \label{subsect:area}

Let us recapitulate the main point of the preceding section: to an outer symplectic billiard orbit $\mathbf{Z}=(z_1,z_2,\ldots,z_n)$ we assign the polygonal line ${\bf Q}$ made of the midpoints  $Q_i = \tfrac12 (z_i + z_{i+1})$, a closed polygon in the case of $n$-periodic orbits and a polygonal line for the orbits connecting two Lagrangian subspaces in general position (in the former case, we assume that $n$ is odd). 
The orbits correspond to the critical points of the functions $F({\bf Q})$ and $G({\bf Q})$, restricted to $M^{\times n}$. 
We now reveal the symplectic meaning of these functions. 

Let ${\bf Z}=(z_1,\ldots,z_n,z_{n+1})$ be an outer symplectic billiard orbit; in the periodic case, one has $z_{n+1}=z_1$. Let 
$$
A({\bf Z}) = \frac{1}{2} \sum_{i=1}^n \omega(z_i,z_{i+1})
$$
be the symplectic area bounded by this orbit.

\begin{lemma} \label{lm:revareas}
Depending on whether the orbit in question is periodic or connecting two Lagrangian subspaces, one has
$$
A({\bf Z})=F({\bf Q}) \ \ {\rm and}\  \ A({\bf Z})= G({\bf Q})
$$
where we assume that $n$ is odd in the periodic case.
\end{lemma}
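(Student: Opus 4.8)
The plan is to verify the two claimed identities $A(\mathbf{Z})=F(\mathbf{Q})$ and $A(\mathbf{Z})=G(\mathbf{Q})$ by direct substitution, using the defining relations $z_i = Q_i - (Q_{i-1}-Q_i-\cdots)$ that express the vertices $z_i$ in terms of the midpoints $Q_i$. The starting observation is that from $Q_i=\tfrac12(z_i+z_{i+1})$ one has $z_{i+1}=2Q_i-z_i$, so the vertices can be recovered recursively from a single initial vertex $z_1$ via the telescoping formula $z_{i+1}=2\sum_{k=1}^{i}(-1)^{i-k}Q_k+(-1)^i z_1$. First I would substitute this expression into the symplectic area $A(\mathbf{Z})=\tfrac12\sum_i\omega(z_i,z_{i+1})$ and expand using bilinearity and antisymmetry of $\omega$; the key point is that the terms involving $z_1$ should organize themselves so as to either cancel (in the periodic case, using that $n$ is odd to close up the recursion consistently) or be pinned down by the Lagrangian boundary conditions (in the two-subspace case).

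For the periodic case, the closure condition $z_{n+1}=z_1$ imposes, since $n$ is odd, the relation $z_1 = \sum_{k=1}^{n}(-1)^{n-k}Q_k + (-1)^{n}z_1 = -z_1+\sum(-1)^{n-k}Q_k$ (wait: with $n$ odd, $(-1)^n=-1$, giving $2z_1=\sum_{k}(-1)^{n-k}Q_k$, so $z_1$ is determined by $\mathbf{Q}$). I would substitute this determined value of $z_1$ back into the expanded area and collect terms. The expectation is that $A(\mathbf{Z})$ reduces to a pure quadratic form in the $Q_i$, and that after carefully tracking the signs $(-1)^{i+j}$ arising from the telescoping sums it matches $F(\mathbf{Q})=2\sum_{i<j}(-1)^{i+j-1}\omega(Q_i,Q_j)$ in equation \eqref{eq:genfun}. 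The factor of $2$ and the sign convention $(-1)^{i+j-1}$ are exactly the kind of thing that will need careful bookkeeping.

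For the two-subspace case I would proceed analogously, but now the recursion is not cyclic, and the two boundary conditions $z_1\in L_1=\R^d_x$ (so $z_1=(x_1,0)$) and $z_{n+1}\in L_2=\R^d_y$ (so $z_{n+1}=(0,y_{n+1}')$) replace the closure relation. These conditions will fix the otherwise-free initial vertex $z_1$ in terms of $\mathbf{Q}$; the general-position hypothesis guarantees this is solvable. Substituting back into $A(\mathbf{Z})$ and expanding, using $\omega(Q_i,Q_j)=q_i\cdot q_j'-q_i'\cdot q_j$ together with the product structure $V=\R^d_x\oplus\R^d_y$, I would expect to recover $G(\mathbf{Q})=2\sum_i q_i\cdot q_i'+4\sum_{i<j}(-1)^{j-i}q_j\cdot q_i'$ from \eqref{eq:genfun1}. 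The appearance of the diagonal term $2\sum_i q_i\cdot q_i'$ (absent in the periodic case) should be traceable precisely to the boundary contributions from $L_1$ and $L_2$.

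The main obstacle I anticipate is purely combinatorial: correctly resolving the nested alternating sums from the telescoping recursion and matching them, sign-for-sign, against the closed-form quadratic expressions \eqref{eq:genfun} and \eqref{eq:genfun1}. This is not conceptually deep, but the double sums over $i<j$ with signs $(-1)^{i+j}$ versus $(-1)^{j-i}$ are easy to get wrong, and the parity hypothesis (odd $n$) in the periodic case enters precisely to make the self-consistent determination of $z_1$ work. A cleaner alternative I would keep in mind is to bypass the explicit recursion entirely: since Lemmas \ref{lm:headtail} and \ref{lm:bdry} already identify $F$ and $G$ as the generating functions of $\mathscr{L}$ and $\mathscr{K}$ under the isomorphism \eqref{eq:iso}, and the symplectic area $A$ is (up to the identifications) the natural action functional whose critical points are the orbits, one could argue that $A$ and $F$ (respectively $G$) must agree as functions on the relevant space because they generate the same Lagrangian and agree at a base point — reducing the problem to checking a single normalization constant rather than a full expansion.
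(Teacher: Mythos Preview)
Your approach is correct but inverts the direction of the paper's substitution, and this makes the bookkeeping considerably heavier than necessary. The paper simply plugs the forward relation $q_i=\tfrac12(x_i+x_{i+1})$, $q_i'=\tfrac12(y_i+y_{i+1})$ into the double sums defining $F(\mathbf{Q})$ and $G(\mathbf{Q})$ and collects terms to obtain $\tfrac12\sum_i(x_i\cdot y_{i+1}-x_{i+1}\cdot y_i)=A(\mathbf{Z})$. No recursion has to be inverted and no explicit solving for $z_1$ is needed: since one starts from a given $\mathbf{Z}$ (with $z_{n+1}=z_1$ in the periodic case, or $z_1\in L_1$, $z_{n+1}\in L_2$ in the other), the closure/boundary data are already built in. Your route---expressing each $z_i$ via the telescoping formula, determining $z_1$ from the closure or boundary conditions, and then expanding $A(\mathbf{Z})$---reaches the same identity but forces you through exactly the nested alternating sums you flag as the main obstacle. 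The paper's direction sidesteps that entirely: the four cross-terms coming from $\omega(Q_i,Q_j)=\tfrac14\,\omega(z_i+z_{i+1},z_j+z_{j+1})$ telescope against neighboring $(i,j)$ with opposite sign, leaving only consecutive pairs. Your closing alternative (matching generating functions up to a constant) is suggestive but would need more to be made rigorous, and in any case is no shorter than the one-line substitution.
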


\proof The proof consists of substituting 
$$
q_i=\tfrac12 (x_i + x_{i+1}), q_i'=\tfrac12 (y_i + y_{i+1})
$$
 into the double sums $F({\bf Q})$ or $G({\bf Q})$ and collecting terms, resulting in the single sum 
$$
\frac{1}{2} \sum_{i=1}^n (x_i \cdot y_{i+1} - x_{i+1} \cdot y_i)
$$
which is the expression $A(\mathbf{Z})$.
\proofend

We note that, in the periodic case, the symplectic area $A({\bf Z})$ of a polygon is invariant under affine symplectic transformations, and in the case of the boundary conditions on the coordinate Lagrangian subspaces $\R^d_x$ and $\R^d_y$, the symplectic area $A({\bf Z})$ is invariant under linear symplectomorphisms that preserve the polarization $V=\R^d_x \oplus \R^d_y$. It follows that the generating functions $F({\bf Q})$ and $G({\bf Q})$ are equivariant relative to the respective symplectic transformations.

We also note that in the periodic case, $A({\bf Z})$ (and hence $F({\bf Q})$) is invariant under the cyclic permutation of its arguments and it changes sign if the cyclic order is reversed.

The following lemma describes how to reconstruct the polygonal line ${\bf Z}$ from the midpoint polygon ${\bf Q}$. 

\begin{lemma} \label{lm:rec}
$ $
\begin{enumerate}
\item In the periodic case, if $n$ is odd, then an $n$-gon ${\bf Q}$ uniquely determines an $n$-gon ${\bf Z}$ such that ${\bf Q}$ is the mid-point polygon of ${\bf Z}$.
If $n$ is even, then ${\bf Q}$ is the mid-point polygon of some polygon ${\bf Z}$ if and only if
\begin{equation} \label{eq:alt}
\sum_{i=1}^n (-1)^i Q_i =0.
\end{equation}
However, in this case ${\bf Z}$ is not unique: an arbitrary choice of a vertex $z_1$ uniquely determines a polygon ${\bf Z}=(z_1,\ldots,z_n)$ with ${\bf Q}$ as its midpoint polygon.
\item In the case of the boundary conditions on two Lagrangian subspaces in general position, an $n$-gon ${\bf Q}$ uniquely determines an $n$-gon ${\bf Z}=(z_1,\ldots, z_{n+1})$ such that ${\bf Q}$ is the mid-point polygon of ${\bf Z}$ and $z_1\in L_1, z_{n+1} \in L_2$.
\end{enumerate}
\end{lemma}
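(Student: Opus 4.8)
The defining relations $z_i + z_{i+1} = 2Q_i$ form a linear system, and the whole lemma is really a statement about when this system is solvable for $\mathbf{Z}$ and how many solutions there are. The plan is to exploit its triangular structure via a telescoping recursion: solving $z_{i+1} = 2Q_i - z_i$ explicitly in terms of the first vertex $z_1$ gives
\[
z_{i+1} = (-1)^i z_1 + 2\sum_{j=1}^i (-1)^{i-j} Q_j,
\]
as one checks directly for small $i$ and confirms by induction. Both claims then follow by imposing the appropriate closing condition and reading off the coefficient of $z_1$.

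For the periodic case I would impose cyclic closure $z_{n+1} = z_1$. Setting $i = n$ in the display yields $(-1)^n z_1 + 2\sum_{j=1}^n (-1)^{n-j} Q_j = z_1$, i.e.\ $\big((-1)^n - 1\big) z_1 + 2\sum_{j=1}^n (-1)^{n-j} Q_j = 0$. When $n$ is odd the coefficient $(-1)^n - 1 = -2$ is invertible, so $z_1$ is determined uniquely (as the alternating sum of the $Q_i$), and hence so is the whole polygon $\mathbf{Z}$. When $n$ is even the $z_1$-terms cancel and the relation collapses to the constraint $\sum_{i=1}^n (-1)^i Q_i = 0$, independent of $z_1$; thus the system is solvable exactly when \eqref{eq:alt} holds, and in that case $z_1$ is a free parameter, each choice producing a valid $\mathbf{Z}$ via the recursion.

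For the boundary case I would keep the same recursion, now \emph{non-cyclic}, with $z_1,\ldots,z_{n+1}$ as $n+1$ unknowns constrained only by $z_1 \in L_1$ and $z_{n+1} \in L_2$. After the affine-symplectic normalization of the previous section I may take $L_1 = \R^d_x$ and $L_2 = \R^d_y$, so $z_1 = (x_1,0)$ and I need precisely the $x$-component of $z_{n+1}$ to vanish. Taking the $x$-part of the closure formula for $z_{n+1}$ produces a single linear equation $(-1)^n x_1 + 2\sum_{j=1}^n (-1)^{n-j} q_j = 0$, which determines $x_1$ and hence $z_1$ and the entire orbit uniquely; the $y$-component of $z_{n+1}$ is then unconstrained and automatically places $z_{n+1}$ in $L_2$.

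The computation is elementary linear algebra, so there is no serious obstacle; the only point requiring care is the parity bookkeeping in the periodic case, namely that the coefficient of $z_1$ in the closure relation is $(-1)^n - 1$, which is invertible precisely when $n$ is odd and vanishes (turning the equation into the constraint on $\mathbf{Q}$) when $n$ is even. This is the same alternating-sum phenomenon already seen in Lemma \ref{lm:headtail}, where $\mathscr{L}$ fails to be a graph over the base exactly in the even case, so I would cross-check the sign conventions against that lemma. In the boundary case the analogous point is that the splitting $V = \R^d_x \oplus \R^d_y$ renders the single vanishing-$x$-component condition exactly determined, with no parity dependence, which is why uniqueness holds for all $n$ there.
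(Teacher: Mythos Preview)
Your proposal is correct and is essentially the same argument as the paper's, just packaged differently. For the periodic case the paper phrases the recursion $z_{i+1}=2Q_i-z_i$ geometrically, as the composition of point reflections in $Q_1,\ldots,Q_n$, and then invokes the classical fact that an odd composition of point reflections is a point reflection (unique fixed point) while an even composition is a translation (fixed points either everywhere or nowhere, according to \eqref{eq:alt}); you obtain the same dichotomy by writing out the explicit telescoping formula and reading off the coefficient $(-1)^n-1$ of $z_1$. For the boundary case both arguments normalize to $L_1=\R^d_x$, $L_2=\R^d_y$ and solve the resulting triangular linear system directly; the paper simply lists the closed-form solution for all $x_i,y_i$, whereas you reduce to the single equation for $x_1$ and let the recursion do the rest. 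One small wording quibble: the $y$-component of $z_{n+1}$ is not ``unconstrained'' but \emph{determined} by the recursion; the point is merely that membership in $L_2$ places no condition on it.
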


\proof
Consider the periodic case first. 
We start with the following observation. Let ${\bf Z}=(z_1,z_2,\ldots,z_n)$ be any polygon and ${\bf Q}=(Q_1,\ldots,Q_n)$ the corresponding $n$-gon of midpoints. By definition, the reflection in the point $Q_i$ takes $z_i$ to $z_{i+1}$. Now, consider the composition of the reflections in $Q_1,\ldots, Q_n$. By construction, $z_1$ is a fixed point of this composition. 

Let ${\bf Q}=(Q_1,\ldots,Q_n)$ be an arbitrary $n$-gon and consider the above composition of reflections. Assume that $n$ is odd and recall that the composition of an odd number of reflections is again a reflection, thus has a unique fixed point, say $z_1$. In this case, set ${\bf Z}=(z_1,z_2,\ldots,z_n)$ to be the orbit of $z_1$ under the sequence of reflections which is $n$-periodic since $z_1$ is the unique fixed point of the $n$-fold composition.

Now, let $n$ be even. Recall that the composition of an even number of reflections is a parallel translation. Choose any point $z_1$ and again determine $z_2,\ldots,z_n, z_{n+1}$ by consecutive reflections in the vertices of ${\bf Q}$. Then summing up the relations \eqref{eq:midpt} shows that $z_{n+1}=z_1$ is equivalent to \eqref{eq:alt}. In other words, the obtained parallel translation has a fixed point (and thus is the identity) if and only if \eqref{eq:alt} holds. In particular, the consecutive reflections close up for one initial point if and only if they close up for any initial point.

Next, consider the boundary conditions on two Lagrangian subspaces in general position. As before, without loss of generality, we assume that these subspaces are $\R^d_x$ and $\R^d_y$. We need to solve the system of equations
\begin{equation}\nonumber
\begin{aligned}
x_i + x_{i+1}&=2q_i, \quad &i=1,\ldots,n,\\ 
y_i+y_{i+1}&=2q_i', \quad &i=1,\ldots,n,\\[.5ex]
y_1&=0,\\ 
x_{n+1}&=0.
\end{aligned}
\end{equation}
This system has 
\begin{equation}\nonumber
\begin{aligned}
x_i&=2\sum_{j=i}^n (-1)^{j-i} q_j,\quad &i=1,\ldots,n,\\[.5ex]
y_i&= 2 \sum_{j=1}^{i-1} (-1)^{i-j+1} q_j',\quad &i=2,\ldots,n+1,
\end{aligned}
\end{equation}
together with $y_1=x_{n+1}=0$ as unique solution. This concludes the proof.
\proofend

\subsection{Existence and non-existence of periodic orbits} \label{subsect:exist}


In this section, we establish the existence of non-degenerate outer symplectic billiard orbits. First, we consider the periodic case.

\begin{theorem} \label{thm:existper}
Let $M\subset V$ be an immersed closed submanifold. 
Then for every odd $n\geq3$ there exists a non-degenerate $n$-periodic orbit of the outer symplectic billiard correspondence.
\end{theorem}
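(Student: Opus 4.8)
The plan is to run the variational machinery of Theorem~\ref{thm:crit}(1): an odd $n$-periodic orbit is precisely a critical point of $F|_{M^{\times n}}$ whose midpoint polygon $\mathbf{Q}$ is non-degenerate (that is, $Q_{i-1}\neq Q_i$ for all $i$). Since $M$ is closed, $M^{\times n}$ is a compact manifold, so $F$ attains its maximum, and this maximum is automatically a critical point, hence \emph{some} $n$-periodic orbit by Theorem~\ref{thm:crit}. All the work therefore goes into showing that the maximizer is non-degenerate; the mere existence of critical points is free.

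First I would analyze the degenerate locus $D\subset M^{\times n}$ of configurations with $Q_{i-1}=Q_i$ for some $i$. By Lemma~\ref{lm:revareas} I read $F(\mathbf{Q})$ as the symplectic area $A(\mathbf{Z})$ of the reconstructed polygon $\mathbf{Z}$, and a degeneracy $Q_{i-1}=Q_i$ forces the backtrack $z_{i-1}=z_{i+1}$, whose two edges contribute $\tfrac12[\omega(z_{i-1},z_i)+\omega(z_i,z_{i-1})]=0$ to the area. Deleting this spike yields a genuine closed $(n-2)$-gon inscribed in $M$ with the same symplectic area; a one-line check shows the midpoint of the merged edge $z_{i-1}z_{i+2}$ equals the old $Q_{i+1}\in M$. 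Conversely, reinserting a spike at any $P\in M$ turns an $(n-2)$-configuration into a point of $D$ with the same value of $F$. Hence $\max_D F = \max_{M^{\times(n-2)}}F_{n-2}=:V_{n-2}$, where $F_{n-2}$ is the generating function \eqref{eq:genfun} for $n-2$ vertices, with the convention $V_1=0$.

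The crux is the strict inequality $\max_{M^{\times n}}F>V_{n-2}$, which instantly places the maximizer outside $D$ and finishes the proof. For this I would take an $(n-2)$-gon $\mathbf{Q}'$ realizing $V_{n-2}$ — it is a critical point of $F_{n-2}$, hence an actual $(n-2)$-orbit $\mathbf{Z}'$ — and reinsert a spike at a vertex $z'$ whose direction $2(P-z')$ is \emph{not} symplectically orthogonal to $T_P M$. By Theorem~\ref{thm:crit} the resulting configuration is not a critical point of $F$ (its reconstructed polygon fails the orthogonality condition, so it is not an orbit), yet it lies in $M^{\times n}$ and has $F=V_{n-2}$; therefore $F$ can be strictly increased along $M^{\times n}$ nearby, giving $\max_{M^{\times n}}F>V_{n-2}=\max_D F$. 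The maximizer is then a non-degenerate $n$-periodic orbit.

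The main obstacle is exactly producing this non-orthogonal spike, i.e.\ ruling out the coincidence that every candidate insertion direction $2(P-z')$ is forced into the proper subspace $T^\omega_P M$ (note $\dim T^\omega_P M = 2d-m<2d$ once $\dim M\ge 1$). I would handle it by a clean dichotomy. If $M$ is contained in an affine isotropic subspace $W$, then by Lemma~\ref{lm:rec} every reconstructed polygon also lies in $W$ and hence has zero symplectic area, so $F\equiv 0$; by Theorem~\ref{thm:crit} every midpoint polygon on $M$ is then an orbit, and since $\dim M\ge1$ one may simply choose one with pairwise distinct consecutive midpoints, which is non-degenerate. Otherwise a non-orthogonal spike is available and the strict-increase argument applies. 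The orientation-reversal antisymmetry of $F$ recorded after Lemma~\ref{lm:revareas} serves as a useful consistency check here — for instance it forces $\max F_3>0$ precisely when $F_3\not\equiv 0$, matching the base case $V_1=0$ of the reduction.
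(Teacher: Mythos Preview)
Your overall plan matches the paper's: maximize $F$ on the compact manifold $M^{\times n}$ and argue that the maximizer cannot be degenerate by comparison with the $(n-2)$-problem. The identity $\max_D F = V_{n-2}$ and the treatment of the affine-isotropic (equivalently, affine-Lagrangian) case are correct.

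The gap is in the ``otherwise'' branch of your dichotomy. You need, for the \emph{specific} reconstructed $(n-2)$-orbit $\mathbf{Z}'$ coming from a maximizer of $F_{n-2}$, some vertex $z'_k$ and some $P\in M$ with $P-z'_k\notin T^\omega_P M$. This is precisely the condition $T_P M\not\subset (P-z'_k)^\omega$ for that particular $z'_k\in V$, i.e.\ an instance of condition~(LL) from \S\ref{subsect:exist}. But (LL) at a given point does \emph{not} follow from ``$M$ not in an affine Lagrangian'': Example~\ref{ex:Leg} exhibits closed $M$ (Legendrians in a sphere) for which $P-z'\in T^\omega_P M$ for all $P\in M$ at a specific center $z'$, while $M$ need not lie in any affine Lagrangian plane. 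The vertices $z'_k$ of $\mathbf{Z}'$ lie in $V$, not in $M$, and are rigidly determined by the maximizer of $F_{n-2}$; you give no reason why they must avoid such bad centers. (For $n=3$ your argument does go through, because $F_1\equiv 0$ lets you choose $z'_1$ to be \emph{any} point of $M$, and then the negation really forces $M$ into an affine Lagrangian; the problem is $n\ge 5$.)

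The paper avoids this by using a \emph{finite} two-point replacement instead of your infinitesimal non-criticality step. If the $n$-maximizer has $Q_{n-1}=Q_n$, one computes directly that replacing these two equal midpoints by arbitrary $x,y\in M$ changes $F$ by exactly $\omega(-v+x,-v+y)$, where $v=-\sum_{i\le n-2}(-1)^iQ_i$. Condition~(L) --- which is literally ``$-v+M$ not contained in a linear Lagrangian'' for every $v$ --- then furnishes $x,y$ making this positive. This works because it asks only for two \emph{points} of $M$, not for a point together with a non-orthogonal tangent direction, and (L) is exactly what the dichotomy hands you.
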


\proof
Let $n$ be odd and consider $F:M^{\times n}\to\R$ from above. Below, we will also consider $F:M^{\times (n+2)}\to\R$. Both functions, by abuse of notation, are labeled by the letter $F$. 

According to Theorem \ref{thm:crit}, a critical point ${\bf Q}$ of $F$ gives, for $n$ odd, a polygon ${\bf Z}$ which is an $n$-periodic outer symplectic billiard orbit. However, recall that periodic orbits might be degenerate, see Definition \ref{def:periodic_orbit_and_degenerate}, i.e., ${\bf Q}$ might have consecutive points which coincide, corresponding to backtracking in the polygon ${\bf Z}$ associated to ${\bf Q}$ by Lemma \ref{lm:rec}.

First, consider the case that $M$ is contained in some affine Lagrangian subspace of $V$. Since the function $F$ is invariant under translations when $n$ is odd, we may assume that $M$ lies in a linear Lagrangian subspace $L\subset V$. But then all terms of the sum (\ref{eq:genfun}) that comprises $F({\bf Q})$ vanish, that is, $F$ is identically zero. Therefore every point in  $M^{\times n}$ is critical, and they all represent periodic orbits. In particular, one has infinitely many non-degenerate $n$-periodic orbits. Argued more geometrically, pick any $n$-gon $\mathbf{Q}$ on $M$ and construct the corresponding $n$-gon $\mathbf{Z}$ according to Lemma \ref{lm:rec}. Then $\mathbf{Z}$ is indeed an $n$-periodic outer symplectic billiard orbit since $z_{i+1}-z_{i}\in L=T_{Q_i}L\subset T_{Q_i}^\omega M$.

\medskip

Now we assume that $M$ is not contained in an affine Lagrangian subspace of $V$, i.e., condition (L)
\begin{equation}\label{eq:assumption_on_M}\tag{L}
\forall v\in V\;\forall L\subset V \text{ linear Lagrangian subspace}: v+M\not\subset L
\end{equation}
is satisfied. We claim that a maximum ${\bf Q}=(Q_1,\ldots, Q_n)\in M^{\times n}$ of $F$ does not exhibit backtracking, i.e., for all $i$ we have $Q_i\neq Q_{i+1}$. Thus, the corresponding polygon ${\bf Z}$ is a non-degenerate $n$-periodic orbit.

To show this, we first examine how the value of $F({\bf Q})$ changes when appending two points from $M$ to ${\bf Q}$. That is, if ${\bf Q}=(Q_1,\ldots, Q_n)$ and $\overline {\bf Q}=(Q_1,\ldots, Q_n, Q_{n+1}, Q_{n+2})$ with $Q_i \in M$, then
\begin{equation}\nonumber
\begin{aligned}
F(&\overline {\bf Q})-F({\bf Q})\\
&=\sum_{1\le i < j \le n+2} (-1)^{i+j-1} \omega(Q_i,Q_j)-\sum_{1\le i < j \le n} (-1)^{i+j-1} \omega(Q_i,Q_j)\\
&=\sum_{i=1}^n(-1)^{i+n+1-1} \omega(Q_i,Q_{n+1})+\sum_{i=1}^{n+1}(-1)^{i+n+2-1} \omega(Q_i,Q_{n+2})\\
&=\sum_{i=1}^n(-1)^{i+n} \omega(Q_i,Q_{n+1})+\sum_{i=1}^{n}(-1)^{i+n+1} \omega(Q_i,Q_{n+2})+\omega(Q_{n+1},Q_{n+2})\\
&=\omega\left(\sum_{i=1}^n(-1)^{i+n} Q_i, \; Q_{n+1}-Q_{n+2}\right)+\omega(Q_{n+1},Q_{n+2})\\
&=\omega\left(-\sum_{i=1}^n(-1)^{i} Q_i, \; Q_{n+1}-Q_{n+2}\right)+\omega(Q_{n+1},{Q}_{n+2}).
\end{aligned}
\end{equation}
Assume now that the polygon $\overline {\bf Q}=(Q_1,\ldots, Q_{n+2})\in M^{\times (n+2)}$ has consecutive points which coincide. Since $F$ is invariant under cyclic permutations, we may assume that $Q_{n+1}=Q_{n+2}$. 
By the above computation, excising the points $Q_{n+1}$ and $Q_{n+2}$ does not change the value of $F$, that is, $F(\overline {\bf Q})=F({\bf Q})$. 

On the other hand, we claim that, using \eqref{eq:assumption_on_M}, we can find two points $Q'_{n+1}, Q'_{n+2}\in M$ such that $ {\bf Q}' = (Q_1, \ldots , Q_n, Q'_{n+1}, Q'_{n+2})$ satisfies $F({\bf Q}')> F({\bf Q})$ (and similarly for ``$<$'').

Indeed, if we abbreviate $v:=-\sum_{i=1}^n(-1)^{i} Q_i$,  then 
\begin{equation}\nonumber
\begin{aligned}
F({\bf Q}')-F({\bf Q})&=\omega(v, Q'_{n+1}-Q'_{n+2})+\omega(Q'_{n+1},Q'_{n+2})\\
&=\omega(-v+Q'_{n+1},-v+Q'_{n+2}).
\end{aligned}
\end{equation}
Now, $-v+M$ is not contained in a linear Lagrangian subspace, by assumption \eqref{eq:assumption_on_M}, and therefore we can find two points $-v+x, -v+y\in-v+M$ (i.e.,~$x,y\in M$) with $\omega(-v+x,-v+y)\neq0$. Without loss of generality, we may assume that $\omega(-v+x,-v+y)>0$. We conclude that
\begin{equation}\nonumber
F({\bf Q}')>F({\bf Q}) = F(\overline{\bf Q})
\end{equation}
if we choose $Q'_{n+1}=x$ and $Q'_{n+2}=y$. Exchanging the roles of $x$ and $y$ gives $F({\bf Q}')<F({\bf Q})$.

Now assume that ${\bf Q}=(Q_1,\ldots, Q_{n})\in M^{\times n}$ is a maximum of $F$. 
If ${\bf Q}$ has consecutive points which coincide then, as demonstrated above, we find another $n$-gon ${\bf Q}'$, obtained by replacing the consecutive points in ${\bf Q}$, satisfying $F({\bf Q}')> F({\bf Q})$. This directly contradicts that ${\bf Q}$ is a maximum of $F$. Therefore, a maximum of $F$ cannot contain consecutive points which coincide. In other word, the maximum of $F$ represents a non-degenerate $n$-periodic orbit. The same works for a minimum of $F$. Unfortunately, reversing the order of ${\bf Q}$ turns a maximum of $F$ into a minimum of $F$, and we can therefore only guarantee to find one non-degenerate $n$-periodic orbit.
\proofend

\begin{remark} \label{rmk:growth}
{\rm For a generic $M$, one expects the number of non-degenerate periodic orbits to grow linearly in the period. For planar Birkhoff billiards (billiards inside ovals), this follows from Birkhoff's theorem: for every ratio $0<\frac{p}{q}\leq\frac12$ in lowest terms, there exist at least two $q$-periodic billiard trajectories with the rotation number $p$. 
For Euclidean billiards in strictly convex domains with smooth boundaries in higher dimensional spaces, the number of periodic trajectories grows linearly with the period and with the dimension; see
 \cite{FT1}. 
}
\end{remark}

\begin{remark} \label{rmk:L}
{\rm
Here are two classes of examples of closed, immersed $M\subset V$ that satisfy condition $\eqref{eq:assumption_on_M}$:

\begin{itemize}
\item If $m=\dim M\geq \tfrac12\dim V$ then $M$ cannot be contained in an affine Lagrangian subspace.
\item For $m=1$, let $M\equiv \gamma$ be symplectically convex, i.e.,~$\omega(\gamma'(t),\gamma''(t))>0$ for all $t$ (see \cite{AT}). If $v+\gamma\subset L$ for some linear Lagrangian subspace $L\subset V$, then $\omega(\gamma'(t),\gamma''(t))=0$ for all $t$ since $\gamma'(t), \gamma''(t) \in L$.
\end{itemize}
}
\end{remark}

The following example, suggested by A.~Akopyan, exhibits curves that do not admit non-degenerated $4$-periodic orbits of the outer symplectic billiard correspondence. This shows that the case of even periods needs a different approach. 

\begin{lemma} \label{lm:Cheb}
The outer symplectic billiard correspondence with respect to the Chebyshev curve $\gamma:\R/{2\pi\Z}\to\R^4$ 
$$
\gamma(t)=\big(\cos t, \sin t, \cos (2t), \sin (2t)\big) \in \R^4
$$
does not have non-degenerated 4-periodic orbits. (The components of $\gamma$ comprise a Chebyshev system of functions.)
\end{lemma}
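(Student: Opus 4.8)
The plan is to show that the \emph{closing condition} alone---the necessary constraint \eqref{eq:alt} that any midpoint quadrilateral must satisfy---together with the very special shape of the Chebyshev curve, already forces every $4$-periodic orbit to backtrack. In particular the symplectic orthogonality conditions never enter the argument, which is why the obstruction is so rigid.

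First I would record the only input I need about the orbit. If $\mathbf{Z}=(z_1,z_2,z_3,z_4)$ is a $4$-periodic orbit with midpoints $Q_i=\tfrac12(z_i+z_{i+1})\in\gamma$, then (as for any closed quadrilateral, cf.\ Lemma \ref{lm:rec} and \eqref{eq:alt})
$$
-Q_1+Q_2-Q_3+Q_4=0,\qquad\text{equivalently}\qquad Q_1+Q_3=Q_2+Q_4 .
$$
Writing $Q_j=\gamma(t_j)=(\cos t_j,\sin t_j,\cos 2t_j,\sin 2t_j)$ and identifying each $\R^2$ factor of $\R^4$ with $\C$, the point $\gamma(t_j)$ becomes the pair $(w_j,w_j^2)\in\C\times\C$ with $w_j:=e^{\sqrt{-1}\,t_j}$ on the unit circle. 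The single vector identity $Q_1+Q_3=Q_2+Q_4$ then splits into the two complex identities
$$
w_1+w_3=w_2+w_4\qquad\text{and}\qquad w_1^2+w_3^2=w_2^2+w_4^2 .
$$

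The key algebraic step is to note that these force $\{w_1,w_3\}=\{w_2,w_4\}$ as multisets. Setting $a:=w_1+w_3=w_2+w_4$ and using $w_1^2+w_3^2=a^2-2w_1w_3$ (and likewise for the other pair), the second identity reduces to $w_1w_3=w_2w_4=:b$. Hence both pairs are the root sets of the same quadratic $z^2-az+b=0$, so they coincide. Unwinding the two possibilities, either $w_1=w_2,\ w_3=w_4$, or $w_1=w_4,\ w_2=w_3$. Since $w_j$ determines $t_j\bmod 2\pi$ and $\gamma$ is $2\pi$-periodic, equality of $w$'s means equality of the corresponding $Q_j$; thus in the two cases one gets $Q_1=Q_2,\ Q_3=Q_4$ or $Q_1=Q_4,\ Q_2=Q_3$. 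Either way two cyclically consecutive midpoints coincide, which by the remark following \eqref{eq:midpt} is exactly backtracking $z_{i-1}=z_{i+1}$. Therefore every $4$-periodic orbit is degenerate, which is the claim.

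The main obstacle here is conceptual rather than computational: recognizing that the quadratic relation contributed by the second Chebyshev coordinate $(\cos 2t,\sin 2t)$ is precisely what upgrades the single linear closing relation into the rigidity $\{w_1,w_3\}=\{w_2,w_4\}$. Once this is seen, the rest is a one-line Vieta argument. The (mild) point to verify carefully is that no extra solutions are lost or gained in passing to the complex encoding, i.e.\ that $w_j$ faithfully records $t_j\bmod 2\pi$ and that the two complex identities are genuinely equivalent to the four real scalar equations of \eqref{eq:alt}.
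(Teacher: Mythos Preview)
Your proof is correct, and it takes a genuinely different route from the paper's. The paper argues geometrically: the Chebyshev system property implies that any affine hyperplane meets $\gamma$ in at most four points, so $\gamma$ cannot carry an inscribed planar quadrilateral (nor three collinear points), and hence no non-degenerate midpoint parallelogram. This requires a separate case analysis for degenerate parallelograms ($Q_1=Q_3$ or $Q_2=Q_4$). Your approach sidesteps all of this by encoding $\gamma(t)$ as $(w,w^2)\in\C^2$ with $w=e^{it}$; the closing relation then becomes equality of the first two power sums of $\{w_1,w_3\}$ and $\{w_2,w_4\}$, and Vieta forces the multisets to agree, yielding consecutive coincidences directly.

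What each approach buys: the paper's hyperplane argument is the one that feeds into the subsequent Remark about degree-$n$ Chebyshev curves and $2n$-periodic orbits with pairwise distinct reflection points, since it only uses the root-counting property of Chebyshev systems. Your argument is shorter and handles the degenerate-parallelogram case for free; it also generalizes (via Newton's identities) to show that for the degree-$n$ curve the odd- and even-indexed midpoint multisets of any $2n$-periodic orbit must coincide, which recovers the same ``not all distinct'' conclusion, though for $n\ge 3$ it no longer forces a \emph{consecutive} coincidence.
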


\proof
Let $(z_1, \ldots, z_4)$ be a non-degenerate $4$-periodic orbit.
First, we observe that Lemma \ref{lm:rec} asserts, in particular, that the midpoints $Q_i=\frac12(z_i+z_{i+1}) \in \gamma$  satisfy 
$$
Q_1 -Q_2 +Q_3 -Q_4 =0.
$$ 
That is, $\gamma$ has an inscribed parallelogram $(Q_1, Q_2, Q_3, Q_4)$. Since the orbit is non-degenerate, see Definition \ref{def:periodic_orbit_and_degenerate}, this parallelogram has distinct consecutive vertices. However, opposite vertices may coincide, that is, it is possible that $Q_1=Q_3$ or $Q_2=Q_4$, but not both -- otherwise the parallelogram degenerates to a point. Thus, we also need to consider degenerate parallelograms consisting of a segment with three points on it.

First, we show that $\gamma$ does not admit inscribed planar quadrilaterals at all, in particular, no non-degenerate parallelograms. 

Indeed, a non-trivial trigonometric polynomial of degree $d$ has at most $2d$ different roots in the interval $[0, 2\pi)$. In particular, any affine hyperplane in $\R^4$ intersects $\gamma$ in at most $4$ different points, unless $\gamma$ is contained in this hyperplane. Clearly, $\gamma$ is not contained in any affine hyperplane. 

Now assume that $\gamma$ admits an inscribed planar quadrilateral $(Q_1, \ldots, Q_4)$. Then we can choose another point $P$ on the curve $\g$ not contained in the affine 2-dimensional plane spanned by the planar quadrilateral $(Q_1, \ldots, Q_4)$ since $\g$ is not contained in an affine hyperplane. Then the hyperplane containing $Q_1, \ldots, Q_4, P$ intersects $\gamma$ in at least $5$ points, a contradiction. 

It remains to consider the case when the parallelogram degenerates. If $Q_2=Q_4$, but $Q_1 \neq Q_3$, then the curve $\gamma$ contains three distinct points that lie on a line. In this case, we can choose two points $P_1$ and $P_2$ on $\g$, so that the five points $Q_1,Q_2,Q_3,P_1,P_2$ span a hyperplane that intersects $\gamma$ in at least $5$ points, again a contradiction, and similarly in the other case. This completes the proof. 
\proofend

\begin{remark}
{\rm
From the above proof it is clear that any curve having the property that it intersects any hyperplane in at most 4 points does not have non-degenerated 4-periodic orbits.

Similarly, the above argument generalizes to Chebyshev curves of degree $n\geq3$, e.g., $\gamma:\R/{2\pi\Z}\to\R^{2n}$,
$$
\gamma(t)= (\cos t, \sin t, \cos (2t), \sin (2t),\ldots, \cos(nt), \sin(nt)) \in \R^{2n}.
$$
Such a curve does not admit a $2n$-periodic outer symplectic billiard orbit with all reflection points $Q_1,\ldots, Q_{2n}$ being pairwise distinct.

Indeed, by Lemma \ref{lm:rec}, $\sum (-1)^i Q_i =0$, therefore the points $Q_1,\ldots,Q_{2n}$ lie in an affine subspace of codimension 2. Then one can choose a point $P \in \g$ so that the hyperplane spanned by $Q_1,\ldots, Q_{2n}, P$ has at least $2n+1$ intersections with $\g$, a contradiction.

However, this does not preclude periodic orbits with fewer than $2n$ distinct reflection points. For example, by Theorem \ref{thm:existper}, there always exists a 3-periodic outer symplectic billiard orbit. Traversing it twice, yields a 6-periodic orbit for every curve $\g \in \R^{2n}$, in particular for Chebyshev curves. 
}
\end{remark}

Next, we consider the existence of non-degenerate $n$-link outer symplectic billiard ``shots" from one Lagrangian subspace to another one. For $n=1$, Corollary \ref{cor:oneshot} already provides the best possible Morse- or category-theoretical lower bound. 

To tackle the general case, let us introduce a class of closed immersed submanifolds $M \subset V$ satisfying the condition (LL)
\begin{equation} \label{eq:good}\tag{LL}
\forall P\in V\ \exists x\in M : T_x M \not\subset (x-P)^\omega.
\end{equation}
Here, $(x-P)^\omega$ denotes the symplectic complement of the linear space $\R(x-P)$ inside $V$. Notice that condition (LL) is invariant under translations of $M$. 

\begin{example} \label{ex:Leg}
{\rm Here is a class of examples of manifolds that fail condition (\ref{eq:good}). Let $P$ be the origin and $S^{2d-1} \subset V$ the unit sphere centered in $P$ equipped with its standard contact structure. That is, the contact hyperplane $\xi_x$ at the point $x \in S^{2d-1}$ is the kernel of the Liouville 1-form $i_x \omega$. 

Let $M \subset (S^{2d-1},\xi)$ be an isotropic submanifold, i.e., for every $x\in M$ we have $T_xM\subset\xi_x$. Such $M$ fails condition (LL); for $P$ the origin, and any $x\in M$, $T_x M \subset \xi_x \subset x^\omega$.
}
\end{example}

It follows from the Frobenius theorem that isotropic submanifolds $M$ of a contact manifold satisfy $\dim M\leq d-1$. The following lemma shows that this dimension condition is necessary to fail condition (LL).

\begin{lemma} \label{lm:adm}
Let $M\subset V^{2d}$ be a closed immersed manifold. If $\dim M \geq d$, then $M$ satisfies condition $\mathrm{(LL)}$.
\end{lemma}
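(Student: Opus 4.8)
The plan is to argue by contradiction. Suppose $M$ fails condition (LL), so there is a point $P\in V$ with $T_xM\subset(x-P)^\omega$ for every $x\in M$; equivalently, $\omega(x-P,\xi)=0$ for all $x\in M$ and all $\xi\in T_xM$. Since (LL) is invariant under translations of $M$, I would first translate so that $P=0$, reducing the hypothesis to
\begin{equation}
\omega(x,\xi)=0\qquad\text{for all } x\in M,\ \xi\in T_xM. \tag{$\ast$}
\end{equation}
(Throughout, for the immersed $M$ one reads $T_xM$ as the image of the immersion's differential and $\omega|_M$ as the pullback of $\omega$.)

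The key observation is that $(\ast)$ says exactly that the radial Liouville form $\alpha=\iota_E\omega$, where $E(x)=x$ is the Euler vector field, restricts to zero on $M$. I would then differentiate. Since the flow of $E$ scales $\omega$ by $e^{2t}$, Cartan's formula gives $d\alpha=\mathcal{L}_E\omega=2\omega$, and therefore
\begin{equation}\nonumber
0=d\big(\alpha|_M\big)=(d\alpha)|_M=2\,\omega|_M.
\end{equation}
Hence $M$ is isotropic, so the standard bound on isotropic subspaces of a $2d$-dimensional symplectic space yields $\dim M\le d$. Combined with the hypothesis $\dim M\ge d$, this forces $\dim M=d$, that is, $M$ is Lagrangian.

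It remains to rule out the borderline Lagrangian case, and this is the main obstacle: the computation above excludes $\dim M>d$ outright, whereas the equality case carries no contradiction until one invokes the compactness of $M$. Here I would use that $M$ is closed, so the smooth function $x\mapsto\langle x,x\rangle$ (Euclidean norm squared, pulled back to $M$) attains a maximum at some $x_0\in M$. At such a maximum $x_0$ is Euclidean-orthogonal to $T_{x_0}M$. On the other hand, $(\ast)$ says $x_0\in(T_{x_0}M)^\omega$, and since $T_{x_0}M$ is Lagrangian we have $(T_{x_0}M)^\omega=T_{x_0}M$, so in fact $x_0\in T_{x_0}M$. The two conditions together give $\langle x_0,x_0\rangle=0$, hence $x_0=0$; thus the maximum of $\langle x,x\rangle$ is $0$, forcing $M=\{0\}$ and contradicting $\dim M=d\ge1$. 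This contradiction shows that (LL) must hold, completing the proof.
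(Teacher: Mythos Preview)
Your proof is correct and takes a genuinely different route from the paper's. The paper argues by first finding a point $x\in M$ where the radial vector $x$ is \emph{not} tangent to $M$ (such a point exists since otherwise $M$ would be a cone, contradicting compactness), then radially projects a neighborhood of $x$ onto the unit sphere $S^{2d-1}$; the image is tangent to the standard contact distribution, hence isotropic in the contact sense, giving $\dim M\le d-1$ in one stroke. You instead observe that the failure of (LL) says precisely that the Liouville form $\alpha=\iota_E\omega$ vanishes on $M$, differentiate via Cartan's formula to get $\omega|_M=0$, and conclude only $\dim M\le d$; the borderline Lagrangian case is then dispatched by a separate maximum-of-$|x|^2$ argument (at the maximum, $x_0$ is Euclidean-orthogonal to $T_{x_0}M$ yet lies in $T_{x_0}M=(T_{x_0}M)^\omega$, forcing $x_0=0$). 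Your approach is more self-contained --- no appeal to the contact sphere --- and its first step does not even use compactness; the paper's approach buys the sharper bound $\dim M\le d-1$ directly, without having to split off and eliminate the Lagrangian case.
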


\proof
Let us prove the equivalent statement: if $M$ fails condition (LL), then $\dim M \leq d-1$. 

The assumption means that there exists a point $P$ such that for all $x \in M$ one has $T_xM \subset (x-P)^\omega$. Without loss of generality, assume that $P$ is the origin.

First, we claim that there exists $0\neq x \in M$ such that $T_x M$ does not contain the vector $x$. Indeed, otherwise the radial vector field would be tangent to $M$, and $M$ would be conical, contradicting its compactness.

Consider a small neighborhood of such a point $x$ and project it radially on the unit sphere $S^{2d-1}$. This projection is a diffeomorphism and, as explained in Example \ref{ex:Leg} above, its image is tangent to the contact distribution in $S^{2d-1}$. The dimension of such isotropic submanifold does not exceed $d-1$. This implies the result.
\proofend

We now proceed to an analog of Theorem \ref{thm:existper} for outer symplectic billiard ``shots" connecting two Lagrangian subspaces in general position.

\begin{theorem} \label{thm:shots}
Let $M\subset V$ be a closed immersed submanifold satisfying condition $\mathrm{(LL)}$ and $n\ge 2$. Then there exist at least two distinct non-degenerate $n$-link outer symplectic billiard trajectories connecting $L_1$ and $L_2$. (The case of $n=1$ was already covered in Corollary \ref{cor:oneshot}).
\end{theorem}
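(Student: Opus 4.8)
The plan is to adapt the max/min scheme of Theorem \ref{thm:existper} to the open-string generating function $G$ of \eqref{eq:genfun1}, extracting non-degeneracy from \eqref{eq:good} by a ``flat-diagonal'' argument. After normalizing $L_1=\R^d_x$, $L_2=\R^d_y$ by an affine symplectomorphism, Theorem \ref{thm:crit}(2) identifies the $n$-link trajectories with the critical points of $G\colon M^{\times n}\to\R$, and Lemma \ref{lm:rec}(2) makes $\mathbf Q\mapsto\mathbf Z$ a bijection. Since $M^{\times n}$ is compact, $G$ has a global maximum $\mathbf Q^+$ and a global minimum $\mathbf Q^-$, hence two critical points and two trajectories; it remains to prove each is non-degenerate and that they differ.

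First I would treat the maximum. Suppose $\mathbf Q^+$ backtracks, i.e. the midpoints $Q_j$ and $Q_{j+1}$ occupy the same position $x\in V$ for some $1\le j\le n-1$. Freezing all midpoints but these two yields $\phi\colon M\times M\to\R$ depending only on the two positions, with $(Q_j,Q_{j+1})$ a global maximum. The crux is two observations. (a) \emph{The diagonal is flat}: by Lemma \ref{lm:revareas}, $\phi$ equals the symplectic area $A(\mathbf Z)$, and when the two positions coincide at $x$ the reconstructed polygon backtracks, $z_j=z_{j+2}$, with only the tip $z_{j+1}$ depending on $x$; its two incident area terms cancel, $\tfrac12\big(\omega(z_j,z_{j+1})+\omega(z_{j+1},z_j)\big)=0$, so $\phi$ is constant along $\{(x,x):x\in M\}$. (b) A short differentiation (equivalently the reconstruction identity $z_{j+1}-z_j=2(x-P)$) shows that at a diagonal point $(x,x)$ the derivative of $\phi$ in its first slot is the linear form $\zeta\mapsto-2\,\omega(x-P,\zeta)$ on $T_xM$, where $P\in V$ is built solely from the frozen midpoints. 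Since $\phi$ depends only on positions, the genuine diagonal point $(x_0,x_0)$ with $x_0=Q_{j+1}$ is again a global maximum; by (a) the whole diagonal then sits at this maximal value, so \emph{every} $(x,x)$ is a global maximum of $\phi$. Consequently the derivative in (b) must vanish on all of $T_xM$ for \emph{every} $x\in M$, i.e. $T_xM\subset(x-P)^\omega$ for all $x$ --- exactly the failure of \eqref{eq:good} at $P$, a contradiction. The verbatim argument for $\mathbf Q^-$ (with ``maximum'' replaced by ``minimum'') shows it is non-degenerate as well.

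Distinctness then comes for free. Backtracking configurations certainly exist --- fix any $x_0\in M$ and set $Q_j=Q_{j+1}=x_0$ with the other midpoints arbitrary --- and the previous paragraph shows that none of them is a global maximum or a global minimum of $G$. Hence $G$ is non-constant, $\max G>\min G$, so $\mathbf Q^+\neq\mathbf Q^-$, and by Lemma \ref{lm:rec}(2) they reconstruct to two distinct non-degenerate $n$-link trajectories from $L_1$ to $L_2$, as required.

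I expect the one genuinely computational point to be (b): rewriting the slot-$j$ derivative of $G$ on the diagonal in the symplectic form $\omega(x-P,\,\cdot\,)$ with $P$ depending only on the frozen data, since this is precisely the link between the variational problem and hypothesis \eqref{eq:good}. The conceptual heart, however, is (a): flatness of the diagonal upgrades a single extremum into extremality all along the diagonal, which is what lets \eqref{eq:good} be contradicted at every point of $M$ rather than merely at one.
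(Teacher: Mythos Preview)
Your proposal is correct and follows essentially the same approach as the paper: both argue that a global extremum of $G$ on $M^{\times n}$ cannot backtrack, and both reduce this to the fact that at a diagonal point $(x,x)$ the relevant first variation is $\zeta\mapsto \omega(x-P,\zeta)$ with $P$ depending only on the frozen midpoints, which is exactly where condition \eqref{eq:good} enters. The paper's version deletes the backtracking pair (value of $G$ unchanged by \eqref{eq:dif}) and then uses \eqref{eq:good} to reinsert a pair $Q_i=x$, $Q_{i+1}=x+\varepsilon\xi$ that strictly increases $G$; your ``flat diagonal'' packaging is the contrapositive of the same computation---since $\phi$ is constant along the diagonal, every $(x,x)$ is a global extremum, hence critical, forcing $T_xM\subset (x-P)^\omega$ for \emph{all} $x$ and contradicting \eqref{eq:good}. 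Your version also supplies the distinctness of the two trajectories (the existence of backtracking configurations for $n\ge 2$, none of which can be an extremum, forces $\min G<\max G$), a point the paper's proof leaves implicit.
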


\proof
The proof is analogous to that of Theorem \ref{thm:existper} and we omit some calculations. 

The trajectories under consideration correspond to the critical points of the function $G: M^{\times n} \to \R$. We claim that the maximum and minimum of $G$ are $n$-gons ${\bf Q}$ that do not backtrack.

Let us argue for a maximum; the case of a minimum is similar. We argue by contradiction. Let ${\bf Q}=(Q_1,\ldots,Q_n)$ be a maximum point such that $Q_i=Q_{i+1}$ for some $i$. Deleting these two points from the $n$-tuple of points of $M$ does not change the value of the function $G$ (as follows from Lemma \ref{lm:revareas}) and from \eqref{eq:dif} below.

We want to add two new points, $Q_{i}$ and $Q_{i+1}$, in such a way that the value of the function $G$  increases. Set
$$
\overline G = G(Q_1,\ldots,Q_{i-1},Q_{i+2},Q_n),\ G=G(Q_1,\ldots,Q_{n}).
$$
Then one calculates
\begin{equation} \label{eq:dif}
G - \overline G = 2 w \cdot (q'_{i+1}-q'_{i}) - 2(q_{i+1}-q_i)\cdot w' + 2q_{i}\cdot q_i'+ 2 q_{i+1}\cdot q_{i+1}' - 4 q_{i+1}\cdot q_{i}',
\end{equation}
where $w, w'$ are $d$-dimensional vectors given by
$$
w= 2 \sum_{j=i+2}^{n} (-1)^{i-j+1} q_j, \ w'= 2 \sum_{j=1}^{i-1} (-1)^{i-j} q_j'.
$$
If we introduce the vectors 
$$
W:=(w,w'),\ U:=(q_{i+1},q_{i}')= Q_{i} + (q_{i+1}-q_{i},0)\in V
$$
we may rewrite \eqref{eq:dif} as
$$
G - \overline G = 2\omega(W+U,Q_{i+1}-Q_{i}).
$$
We are ready to choose the points $Q_{i}$ and $Q_{i+1}$. For that, we apply condition (LL) to the point $P=-W$. That is, we 
find $x\in M$ such that 
$$
T_x M \not\subset  (x+W)^\omega.
$$
In particular, there exists $\xi \in T_x M$ such that $\omega(x+W,\xi)=1$. 

If we set $Q_{i}:=x$ and choose $Q_{i+1}$ to be a point on the geodesic starting at $Q_{i}$ in  direction $\xi$ which has distance $\varepsilon > 0$ from $Q_{i}$, 
we conclude
$$
U = Q_{i} + O(\varepsilon),\ Q_{i+1}-Q_{i} = \varepsilon \xi + O(\varepsilon^2),
$$
therefore
$$
\omega(W+U,Q_{i+1}-Q_{i}) = \varepsilon \omega(W+Q_{i},\xi) + O(\varepsilon^2) = \varepsilon + O(\varepsilon^2).
$$
It follows that, choosing a sufficiently small positive $\varepsilon$, we make $G - \bar G$ positive. This is a desired contradiction.
\proofend

Theorem \ref{thm:shots} is weaker than one would like it to be: we think that its result holds for all submanifolds $M$. And, similarly to Remark \ref{rmk:growth}, we expect the number of $n$-link non-degenerate orbits to grow linearly with $n$.

\subsection{Outer symplectic billiard map near a curve} \label{subsect:near}

In this section, we consider the outer symplectic billiard relation in the case of a curve $\gamma$. We repeat the definition given in the previous section.

\begin{definition}
{\rm    
A curve $\gamma: \R \to V$ in the symplectic vector space $(V,\omega)$ is called symplectically convex if  $\omega(\gamma'(t), \gamma''(t)) > 0$ for any $t\in \R$.
}
\end{definition}

This notion was introduced in \cite{AT}. In this section, all curves will be assumed to be symplectically convex, and we typically parametrize a symplectically convex curve $\gamma$ so that $\omega(\gamma'(t),\gamma''(t))=1$. Also, we assume that $\g$ is a closed curve, i.e.,~$\g:S^1\to\R^{2n}$. Let $X\subset \R^{2d}\times S^1$ be given by 
$$
X=\{(v,t) \mid \omega(v,\gamma'(t))=0\}=\bigcup_{t\in S^1} T_{\gamma(t)}^\omega\gamma\times\{t\}.
$$ 
Since $\gamma'(t)\neq0$ for all $t$, this is a smooth immersed submanifold of codimension 1 of $\R^{2d}\times S^1$.

Consider the map $\Psi:X\to\R^{2d}$ given by $\Psi(v,t)=\gamma(t)+v$. The image $\Psi(X)\subset\R^{2d}$ of this map is the set where the outer symplectic billiard relation with respect to $\gamma$ is defined. At the regular values of $\Psi$, the multiplicity function is defined: it is the locally constant function taking values in non-negative integers that counts the number of preimages of $\Psi$. How this number changes is, of course, of interest for the symplectic billiard relation. Therefore, let us define the set $\Delta\subset X$ of critical points and the set $\Sigma:=\Psi(\Delta)\subset\R^{2d}$ of critical values of $\Psi$. In particular, the number of preimages is locally constant on the set $\Psi(X)\setminus\Sigma\subset\R^{2d}$. We call $\Sigma$ the ``wall'' (one needs to have crossed it if the multiplicity changed). The following proposition describes properties of $\Delta$ and $\Sigma$ if the curve $\gamma$ is symplectically convex.
 

\begin{proposition}\label{prop:properties_Delta_Sigma_symp_convex_curve}
Let $\gamma \subset \R^{2d}$ be a closed, symplectically convex curve. Then, $\Delta=\mathrm{Crit}(\Psi)\subset X$ is a submanifold of $\dim\Delta=2d-1$ and can be described as 
\begin{equation}\label{eq:Delta}
\Delta=\{(v,t)\in X\mid \omega(v,\gamma''(t))=0\}.
\end{equation}
Moreover, the wall $\Sigma=\Psi(\Delta)$ can be expressed as
\begin{equation}\label{eq:wall}
\Sigma=\left\{P\in\R^{2d}\;\Bigg| \;\;\exists t\in S^1\text{ with }\;
\begin{aligned}
\omega(P,\gamma'(t))&=\omega(\gamma(t),\gamma'(t))\\ 
\omega(P,\gamma''(t))&=\omega(\gamma(t),\gamma''(t)) 
\end{aligned}
\right\}.
\end{equation}
If we define the set $\Delta^{\mathrm{sing}}\subset\Delta$ by
\begin{equation}\nonumber
\Delta^{\mathrm{sing}}:=\left\{(v,t)\in\Delta\;\Big| \;\; \omega(v,\gamma'''(t))=\omega(\gamma'(t),\gamma''(t))\right\}
\end{equation}
and $\Sigma^{\mathrm{sing}}\subset\Sigma$ by
\begin{equation} \label{eq:third}
\Sigma^{\mathrm{sing}}:=\left\{P\in\Sigma\;\Bigg| \;\;
\begin{aligned}
&\exists t\in S^1\text{ with }\\
&\omega(P,\gamma'''(t))=\omega(\gamma'(t),\gamma''(t)) +\omega(\gamma(t),\gamma'''(t)) 
\end{aligned}
\right\}
\end{equation}
then $\Psi(\Delta^{\mathrm{sing}})=\Sigma^{\mathrm{sing}}$ and
\begin{equation}\nonumber
\Psi|_{\Delta\setminus\Delta^{\mathrm{sing}}}:\Delta\setminus\Delta^{\mathrm{sing}}\to\Sigma\setminus\Sigma^{\mathrm{sing}}
\end{equation}
is a local diffeomorphism. In fact, $\Delta^{\mathrm{sing}}$ is precisely the set where $D\Psi|_{T\Delta}$ has not full rank (that is, its rank is strictly less than $2d-1$, the dimension of $\Delta$). 
\end{proposition}

\begin{remark}
{\rm
We point out that for the sake of readability the definition \eqref{eq:third} of $\Sigma^{\mathrm{sing}}$ is not quite accurate. The point part ``$P\in\Sigma$'' already includes a choice of $t\in S^1$. This is the same $t$ as in the additional equation. That is, $P\in \Sigma^{\mathrm{sing}}$ if there exists $t\in S^1$  with
\begin{equation}\label{eq:third_all_eqns}
\left\{
\begin{aligned}
\omega(P,\gamma'(t))&=\omega(\gamma(t),\gamma'(t))\\ 
\omega(P,\gamma''(t))&=\omega(\gamma(t),\gamma''(t))\\
\omega(P,\gamma'''(t))&=\omega(\gamma'(t),\gamma''(t)) +\omega(\gamma(t),\gamma'''(t)) 
\end{aligned}
\right.
\end{equation}
} 
\end{remark}

\proof

Recall  that the image of the map $\Psi:X\to\R^{2d}$, given by $\Psi(v,t)=\gamma(t)+v$, is the set where the outer symplectic billiard relation with respect to $\gamma$ is defined. By definition, $P\in {\rm Im} \Psi\subset \R^{2d}$ if and only if there exists $(v,t)\in X$ such that $P=\gamma(t)+v$ with $v\in T_{\gamma(t)}^\omega\gamma$, that is
\begin{equation} \label{eq:first}
\omega(P,\gamma'(t))=\omega(\gamma(t),\gamma'(t)).
\end{equation} 
For a fixed $t$, this equation describes the affine hyperplane $\gamma(t)+T_{\gamma(t)}^\omega\gamma$ and explains the first equation in \eqref{eq:wall}. Now, we determine $\Delta$. To do so, we observe that for $(v,t) \in X$,

\begin{equation}\nonumber
T_{(v,t)}X=\{(\hat v,\hat t)\in\R^{2d}\times\R\mid \omega(\hat v,\gamma'(t))+\hat t\omega(v,\gamma''(t))=0\}
\end{equation}
and
\begin{equation}\nonumber
D\Psi(v,t)(\hat v,\hat t)=\hat t\gamma'(t)+\hat v\in \R^{2d}.
\end{equation}
Now assume that $(v,t)$ satisfies $\omega(v,\gamma''(t))=0$. Then 
\begin{equation}\nonumber
T_{(v,t)}X=\{(\hat v,\hat t)\mid \omega(\hat v,\gamma'(t))=0\}=(\R\gamma'(t))^\omega\times\R
\end{equation}
and 
\begin{equation}\nonumber
D\Psi(v,t)(T_{(v,t)}X)=(\R\gamma'(t))^\omega\subsetneq\R^{2d}.
\end{equation}
In particular, $D\Psi(v,t)$ is not surjective. This shows the first inclusion in \eqref{eq:Delta}. 

Now assume that $\omega(v,\gamma''(t))\neq0$. Using that $\gamma$ is symplectically convex, we can split 
$$
\R^{2d}=(\R\gamma'(t))^\omega\oplus(\R\gamma''(t)).
$$
For $w\in (\R\gamma'(t))^\omega$ set $(w,0)\in T_{(v,t)}X$ and then $D\Psi(v,t)(w,0)=w$. For $w=\gamma''(t)$ we consider
\begin{equation}\nonumber
\left\{
\begin{aligned}
\hat t&:=\frac{1}{\omega(v,\gamma''(t))},\\
\hat v&:=\gamma''(t)-\hat t\gamma'(t)=\gamma''(t)-\frac{1}{\omega(v,\gamma''(t))}\gamma'(t).
\end{aligned}
\right.
\end{equation}
Then $(\hat v,\hat t)\in T_{(v,t)}X$, using $\omega(\g'(t),\g''(t))=1$. Moreover, 
\begin{equation}\nonumber
D\Psi(v,t)(\hat v,\hat t)=\hat t\gamma'(t)+\hat v=\gamma''(t).
\end{equation}
Thus, $\omega(v,\gamma''(t))\neq0$ implies that $D\Psi(v,t)$ is surjective and, therefore, the other inclusion in \eqref{eq:Delta} is established.

Now, we show that $\Delta$ is a manifold. For that, we consider the map 
\begin{equation}\nonumber
\begin{aligned}
\phi: \R^{2d}\times S^1 &\to \R^2\\
(v,t)&\mapsto\phi(v,t)= \big(\omega(v,\gamma'(t)),\;\omega(v,\gamma''(t))\big)
\end{aligned}
\end{equation}
and check that $(0,0)\in\R^2$ is a regular value of $\phi$. Indeed, already the map
\begin{equation}\nonumber
\R^{2d}\ni\xi\mapsto D\phi(v,t)[\xi,0]=\big(\omega(\xi,\gamma'(t)),\omega(\xi,\gamma''(t))\big)\in\R^2
\end{equation}
is onto if $\gamma$ is symplectically convex, since
\begin{equation}\nonumber
D\phi(v,t)[\gamma'(t),0]=(0,1)\quad\text{and}\quad D\phi(v,t)[\gamma''(t),0]=(-1,0).
\end{equation}
The claim $\Psi(\Delta^{\mathrm{sing}})=\Sigma^{\mathrm{sing}}$ follows by observing that the three equations determining $(v,t)\in\Delta^{\mathrm{sing}}$ directly translate into the three equations determining $(P,t)\in\Sigma^{\mathrm{sing}}$ via the bijection $(v,t)\mapsto (\Psi(v,t)=\gamma(t)+v,t)=(P,t)$. A similar argument applies to the description of $\Sigma$ given by equation (\ref{eq:wall}).

It remains to show that
\begin{equation}\nonumber
D\Psi(v,t):T_{(v,t)}\Delta\to \R^{2d}
\end{equation}
does not have full rank if and only if $(v,t)\in\Delta^{\mathrm{sing}}$. Let $(v,t)\in\Delta$, i.e., 
\begin{equation}\label{eq:vt_in_Delta}
\left\{
\begin{aligned}
\omega(v,\gamma'(t))&=0\\
\omega(v,\gamma''(t))&=0.
\end{aligned}
\right.
\end{equation}
Then, we have
\begin{equation}\label{eq:hat_vt_in_TDelta}
T_{(v,t)}\Delta=\left\{
(\hat v, \hat t)\in\R^{2d}\times\R\;\;\Bigg|\;\;
\begin{aligned}
&\omega(\hat v,\gamma'(t))=0\\
&\omega(\hat v,\gamma''(t))+\hat t\omega(v,\gamma'''(t))=0
\end{aligned}
\right\}.
\end{equation}
Now, assume that $(\hat v,\hat t)\in\ker D\Psi(v,t)$, that is, $\hat t\gamma'(t)+\hat v=0$ or
\begin{equation}\nonumber
\hat v=-\hat t\gamma'(t).
\end{equation}
Combining this with the second equation in \eqref{eq:hat_vt_in_TDelta} we conclude
\begin{equation}\nonumber
-\hat t\omega(\gamma'(t),\gamma''(t))+\hat t\omega(v,\gamma'''(t))=0.
\end{equation}
Notice that if $\hat t=0$ then $\hat v=0$. Thus, if $(\hat v,\hat t)\neq0$ then
\begin{equation}\label{eq:vt_in_Delta_sing}
\omega(v,\gamma'''(t))=\omega(\gamma'(t),\gamma''(t)).
\end{equation}
We showed that if $\ker D\Psi(v,t)\neq\{0\}$ then $(v,t)\in\Delta$ satisfies the additional equation \eqref{eq:vt_in_Delta_sing}, i.e., $(v,t)\in \Delta^{\mathrm{sing}}$. 

Conversely, assume $(v,t)\in \Delta^{\mathrm{sing}}$, i.e.,
\begin{equation}\nonumber
\left\{
\begin{aligned}
\omega(v,\gamma'(t))&=0\\
\omega(v,\gamma''(t))&=0\\
\omega(v,\gamma'''(t))&=\omega(\gamma'(t),\gamma''(t))
\end{aligned}
\right.
\end{equation}
and consider $(\hat v,\hat t):=(\gamma'(t),-1)$. Then
\begin{equation}\nonumber
(\hat v,\hat t)\in T_{(v,t)}\Delta
\end{equation}
and
\begin{equation}\nonumber
D\Psi(v,t)(\hat v,\hat t)=0,
\end{equation}
i.e., $\ker D\Psi(v,t)\neq\{0\}$. Thus, we showed that $\Delta^{\mathrm{sing}}$ is precisely the set on which $D\Psi|_\Delta$ does not have full rank.  
\proofend

\begin{remark}
{\rm
While the above proof consists of fairly straightforward computation, the equations for $\Delta$, $\Sigma$ etc.~in Proposition \ref{prop:properties_Delta_Sigma_symp_convex_curve} have concrete geometric meaning. Let us explain this a bit further.

Recall that $P\in {\rm Im} \Psi\subset \R^{2d}$ if and only if there exists $t\in S^1$ such that
\begin{equation} \nonumber
\omega(P,\gamma'(t))=\omega(\gamma(t),\gamma'(t)).
\end{equation} 
That is, if $P$ is contained in the affine hyperplane $\gamma(t)+T_{\gamma(t)}^\omega\gamma$.

The intersection of an infinitesimally close pair of such hyperplanes is then given by the set of points $P$ for which there exists $t\in S^1$ such that the equations
\begin{equation} \nonumber
\begin{aligned}
&\omega(P,\gamma'(t))=\omega(\gamma(t),\gamma'(t)),\\ 
&\omega(P,\gamma''(t))=\omega(\gamma(t),\gamma''(t))
\end{aligned}
\end{equation} 
hold. This is also the location where the multiplicity function should change its value.

Likewise, the intersection of an infinitesimally close triple of such hyperplanes, the singular points of $\Sigma$, is
given by the set of points $P$ for which there exists $t\in S^1$ such that
\begin{equation} \nonumber
\begin{aligned}
&\omega(P,\gamma'(t))=\omega(\gamma(t),\gamma'(t)),\\ 
&\omega(P,\gamma''(t))=\omega(\gamma(t),\gamma''(t)),\\
&\omega(P,\gamma'''(t))=\omega(\gamma'(t),\gamma''(t)) +\omega(\gamma(t),\gamma'''(t))
\end{aligned}
\end{equation} 
hold. This slightly heuristic argument directly gives the descriptions of the sets $\Sigma$ and $\Sigma^{\mathrm{sing}}$.

A slightly different geometric point of view is the following. We observe that equation \eqref{eq:first} can be rewritten as
\begin{equation}\label{eq:image_of_Psi}
(J\gamma'(t))\cdot (P-\gamma(t))=0
\end{equation}
where $J$ is the complex structure on $\R^{2d}\cong\C^d$. If we set 
$$
c(t):=\Big(J\gamma'(t),-\big(J\gamma'(t)\big)\cdot\gamma(t) \Big)\in\R^{2d}\times\R
$$
and $\bar P:=(P,1)\in\R^{2d}\times\R$, then the affine equation \eqref{eq:image_of_Psi} becomes the linear equation
\begin{equation}\label{eq:first_rephrase}
c(t)\cdot \bar P=0.
\end{equation}
Now, fixing $P\in\R^{2d}$, our multiplicity function asks for the number of $t\in S^1$ such that equation \eqref{eq:first} holds with that given $P$. We may rephrase this as the number of $t\in S^1$ such that \eqref{eq:first_rephrase} holds, i.e.,~how often the curve $c$ intersects the hyperplane $(\R\bar P)^\perp\subset\R^{2d}\times\R$. Thus, if the counting function changes its value at a point $P$ then this  intersection cannot be transverse, that is, 
$$
c'(t)\cdot \bar P=0.
$$
Using
\begin{equation}\nonumber
\begin{aligned}
c'(t)
&=\Big(J\gamma''(t),-\big(J\gamma''(t)\big)\cdot\gamma(t)\Big),
\end{aligned}
\end{equation}
a non-transverse intersection implies
\begin{equation}\nonumber
0=c'(t)\cdot \bar P=(J\gamma''(t))\cdot (P-\gamma(t))=\omega(\gamma''(t),P-\gamma(t)),
\end{equation}
i.e., equations \eqref{eq:wall} are satisfied.  Similarly, the equation 
$$
c''(t)\cdot\bar P=0
$$
is equivalent to the additional equation \eqref{eq:third}, i.e., the intersection is not only not transverse but of even higher order.
} 
\end{remark}

Next, we observe that near a symplectically convex curve the wall ``has no singularities''. In particular, the wall $\Sigma$ is nearby an honest codimension-1 submanifold.

\begin{lemma} \label{lm:near}
A  curve $\g$ has a neighborhood that does not contain points of $\Sigma^{\mathrm{sing}}$ if and only if it is symplectically convex. Moreover, in this is the case, 
$$
\g\subset\Sigma \quad\text{and}\quad T_{\g(t)}\Sigma=T_{\gamma(t)}^{\omega}\gamma
$$ 
for all $t\in S^1$.
\end{lemma}

\begin{proof}
The third equation in \eqref{eq:third_all_eqns} cannot hold in a sufficiently small neighborhood of $\gamma$. Indeed, using symplectic convexity, we may rewrite it as
$\omega(P-\gamma(t),\gamma'''(t))=1$ and since the left-hand side is small for $P$ close to $\gamma$ we arrive at a contradiction.

Conversely, assume that symplectic convexity fails at one point $t_0$, i.e., we have $\omega(\g'(t_0),\g''(t_0))=0$ and $\g'(t_0)\neq0$. Then, $P=\gamma(t_0)$ trivially satisfies the three equations in \eqref{eq:third_all_eqns}, that is, $P \in \Sigma^{\mathrm{sing}}$.

For the second claim, we observe that $P=\g(t)$ clearly satisfies both equations in \eqref{eq:wall}, i.e.,~$\g\subset\Sigma$. Finally, $\Sigma$ is ruled by the spaces $\mathrm{Span}(\gamma'(t),\gamma''(t))^{\omega}$, hence 
$$T_{\gamma(t)} \Sigma = {\rm Span} (\gamma'(t),(\gamma'(t),\gamma''(t))^{\omega}) = (\R\gamma'(t))^{\omega}= T_{\gamma(t)}^{\omega}\gamma,$$
as claimed.
\proofend
\end{proof}

\begin{example} \label{ex:34}
{\rm
Consider a cubic parabola in the plane, Figure \ref{fig:34}. 
\begin{figure}[hbt] 
\centering
\includegraphics[height=2in]{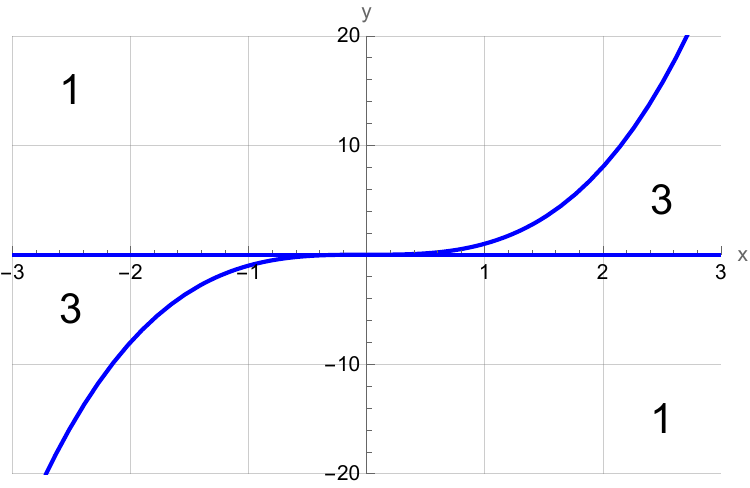}\\[2ex]
\includegraphics[height=2in]{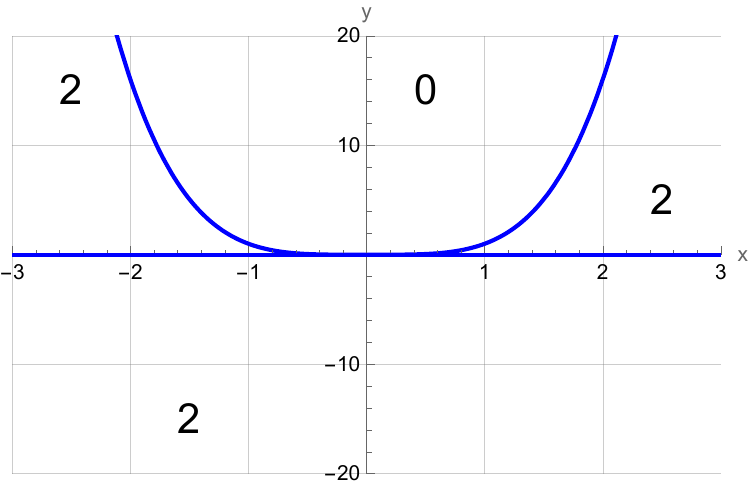}
\caption{A cubic and a quartic parabolas.}
\label{fig:34}
\end{figure}
It fails to be (symplectically) convex at the origin. $\Sigma$ is the union of the curve and the horizontal axis, and $\Sigma^{\mathrm{sing}}$ is the origin. $\Sigma$ separates the plane into four regions; the multiplicity equals 3 on the convex, and 1 on the concave sides of the parabola. The multiplicity does not change when crossing at the origin.

For a quartic parabola, $\Sigma$ also is the union of the curve and the horizontal axis, but the whole horizontal axis is $\Sigma^{\mathrm{sing}}$. The multiplicity is 2 on the convex and 0 on the concave sides of the parabola; it does not change when crossing the axis except at the origin.
}
\end{example}



Now, we come to the main observation for symplectically convex curves. The wall divides a neighborhood of the curve into two ``sides''. We will show that the multiplicity function has values $0$ resp.~$2$ on either side. This is exactly the behavior for outer billiards for a strictly convex curve in the plane: there are precisely two tangent rays emanating from a point outside the curve, and there is no such ray if the point is contained inside the curve.

\begin{lemma} \label{lm:0or2}
The local multiplicity in a sufficiently small neighborhood of a symplectically convex curve $\gamma$ is either 0 or 2. More precisely, a point $P$ in a sufficiently small neighborhood of $\gamma$ can either not be written as $\gamma(t)+v$ with $\omega(\gamma'(t),v)=0$, or can be written in this way in precisely two different ways -- always assuming $P$ being close to $\gamma(t)$ and $v$ being small.
\end{lemma}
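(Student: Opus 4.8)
The plan is to reduce the multiplicity count to counting zeros of a single scalar function and to exploit the normalization $\omega(\gamma'(t),\gamma''(t))=1$ to show that this function is strictly concave near the curve. Fix a point $P$ close to $\gamma$ and close to some $\gamma(t_0)$. A representation $P=\gamma(t)+v$ with $v\in T^\omega_{\gamma(t)}\gamma$ amounts to $\omega(P-\gamma(t),\gamma'(t))=0$, so the local multiplicity equals the number of zeros $t$ near $t_0$ of
$$
g_P(t):=\omega\big(P-\gamma(t),\gamma'(t)\big),
$$
where the restriction ``$v$ small'' is exactly the restriction to $t$ in a short interval around $t_0$ (for $P$ in a tubular neighborhood, $|v|=|P-\gamma(t)|$ is small only for $t$ close to $t_0$).

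First I would differentiate. One computes $g_P'(t)=\omega(P-\gamma(t),\gamma''(t))$ and, using $\omega(\gamma'(t),\gamma''(t))=1$,
$$
g_P''(t)=\omega\big(P-\gamma(t),\gamma'''(t)\big)-1.
$$
For $P$ in a sufficiently small neighborhood of $\gamma$ the term $\omega(P-\gamma(t),\gamma'''(t))$ is uniformly small on the relevant $t$-interval (it is bounded by $|P-\gamma(t)|$ times $\sup\|\gamma'''\|$), so $g_P''<0$ there. Hence $g_P$ is strictly concave, and therefore has at most two zeros near $t_0$. This already gives the upper bound in the statement; it remains to rule out an odd count, i.e., to show that a single zero can occur only on the wall.

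To pin down the parity I would introduce the maximum. Since $g_P$ is strictly concave on the interval, it has a unique nondegenerate critical point $t_*(P)$, depending smoothly on $P$ by the implicit function theorem applied to $g_P'=0$ (using $g_P''\neq0$). Let $M(P):=g_P(t_*(P))$ be the maximal value. Concavity then forces the zero count to be $2$ if $M(P)>0$, exactly one (a double zero) if $M(P)=0$, and $0$ if $M(P)<0$. The locus $\{M(P)=0\}$ is precisely the set of $P$ for which $g_P(t)=g_P'(t)=0$ for some $t$, which is the wall $\Sigma$ described by the two equations in \eqref{eq:wall}; by Lemma \ref{lm:near} this is a smooth hypersurface near $\gamma$. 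Consequently, at any regular value $P$ — that is, away from $\Sigma$, where the multiplicity is actually defined — the count is $0$ or $2$, and the two regions are separated by $\Sigma$. This simultaneously confirms that crossing $\Sigma$ at a smooth point changes the multiplicity by $2$.

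I expect the only delicate point to be the bookkeeping of ``locality'': verifying that for $P$ in a small tubular neighborhood the small-$v$ solutions correspond exactly to $t$ in one short interval, so that the possibly many zeros of $g_P$ coming from large $v$ do not interfere, and that the bound $g_P''<0$ holds uniformly on that interval. Everything else is the standard concave-function picture — a nondegenerate maximum whose height $M(P)$ changes sign across the wall, splitting a double root into two simple roots on one side and into none on the other.
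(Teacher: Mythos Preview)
Your proposal is correct and takes a genuinely different route from the paper's proof. The paper argues indirectly: since the multiplicity is locally constant off $\Sigma$ and (by Lemma~\ref{lm:near}) $\Sigma$ is a smooth hypersurface near $\gamma$, it suffices to exhibit one point on each side of $\Sigma$ with the right count. It does this with the explicit test point $P=\gamma(0)+\delta\gamma''(0)$ and the auxiliary function $\eta(t)=\omega(\gamma(t)-\gamma(0),\gamma'(t))/\omega(\gamma''(0),\gamma'(t))$, showing $\eta(t)=-\tfrac12 t^2+O(t^3)$ so that $\delta=\eta(t)$ has no small solutions for $\delta>0$ and two for $\delta<0$. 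Your approach instead treats all $P$ at once via the concavity of $g_P$, reading off the count from the sign of the maximum $M(P)$ and identifying $\{M(P)=0\}$ with $\Sigma$. Both hinge on the same computation: your $g_P''(t)=\omega(P-\gamma(t),\gamma'''(t))-1$ is exactly the quantity whose nonvanishing near $\gamma$ is the content of Lemma~\ref{lm:near} (compare \eqref{eq:third}). Your argument is more uniform and yields the ``multiplicity jumps by $2$ across $\Sigma$'' statement for free; the paper's is more hands-on but pins down which side of $\Sigma$ carries which multiplicity.

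One small point to make explicit: the implication ``$M(P)>0\Rightarrow$ two zeros'' needs $g_P$ to be negative at the endpoints of the short $t$-interval. This follows from the same estimates you already use (with $\varepsilon\ll\delta^2$ one gets $g_P(t_0\pm\delta)=-\tfrac12\delta^2+O(\varepsilon)+O(\delta^3)<0$), and it is part of the ``bookkeeping of locality'' you flag, but it should be stated so that zeros cannot escape through the boundary of the interval as $P$ varies.
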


\begin{proof}
From the previous Lemma \ref{lm:near}, we already know that the wall $\Sigma$ is a smooth hypersurface near $\gamma$. We claim that the multiplicity on either side of $\Sigma$ is 0 or 2 as long as we are near $\gamma$. 

As discussed above, the multiplicity is locally constant since it can only change when crossing $\Sigma$. Therefore, it is enough to exhibit a point on each side of $\Sigma$ (arbitrarily close to $\gamma$): one with multiplicity 0 and the other with multiplicity 2. 

Our candidate is the point $P:=\gamma(0)+\delta\gamma''(0)$ where, without loss of generality, we choose $t=0$. As we will show, for all sufficiently small $\delta>0$, there is no way to express $P=\gamma(t)+v$ with $\omega(\gamma'(t),v)=0$, $t\neq0$, and $v\neq0$ small. On the other hand, for all sufficiently small $\delta<0$, there are precisely two different such representations.

For that, we consider the function
\begin{equation}\label{eqn:def_eta}
\eta(t) =\frac{\omega(\gamma(t)-\gamma(0),\gamma'(t))}{\omega(\gamma''(0),\gamma'(t))}
\end{equation}
which is well-defined for $t$ close to $0$, again due to symplectic convexity. 
We will show below that
\begin{equation}
\eta(t)=-\tfrac12t^2+O(t^3).
\end{equation}
Now assume that for some small $\delta\in\R$ we can write $P=\gamma(0)+\delta\gamma''(0)$ as 
\begin{equation}\nonumber
P=\gamma(0)+\delta\gamma''(0)=\gamma(t)+v
\end{equation}
with $\omega(\gamma'(t),v)=0$, where $t\neq0$ and $v\neq0$ are both small. Thus,
\begin{equation}\nonumber
\delta\g''(0)=\g(t)+v-\g(0)
\end{equation}
and we obtain
\begin{equation}\nonumber
\delta=\frac{\omega(\gamma(t)+v-\gamma(0),\gamma'(t))}{\omega(\gamma''(0),\gamma'(t))}=\frac{\omega(\gamma(t)-\gamma(0),\gamma'(t))}{\omega(\gamma''(0),\gamma'(t))}
\end{equation}
using $\omega(\gamma'(t),v)=0$. By assumption, $\delta$ and $t$ are small and therefore $\delta=\eta(t)$. Again, since $t$ is small, $\eta(t)=-\tfrac12t^2+O(t^3)$ is strictly negative. Therefore, if $\delta>0$ is positive, we arrive at the desired contradiction. 

On the other hand, if $\delta<0$ then there are precisely two small values $t_-<0<t_+$ with $\eta(t_\pm)=\delta$ and we claim that $P:=\gamma(0)+\delta\gamma''(0)$ has precisely two presentations as $P=\gamma(t)+v$ with $\omega(\gamma'(t),v)=0$ and $t\neq0$ and $v\neq0$ small. Indeed
\begin{equation}\nonumber
P=\gamma(0)+\delta\gamma''(0)=\gamma(t_\pm)+v_\pm
\end{equation}
with 
\begin{equation}\nonumber
v_\pm:=\gamma(0)+\underbrace{\eta(t_\pm)}_{=\delta} \gamma''(0) - \gamma(t_\pm).
\end{equation}
Finally, since $t_\pm$ is small, so is $v$ and we see, using equation \eqref{eqn:def_eta}, that
\begin{equation}\nonumber
\begin{aligned}
\omega(\gamma'(t_\pm),v_\pm)&=\omega(\gamma'(t_\pm),\gamma(0)+\eta(t_\pm) \gamma''(0) - \gamma(t_\pm))\\
&=\omega(\gamma'(t_\pm),\gamma(0) - \gamma(t_\pm)) +\eta(t_\pm) \omega(\gamma'(t_\pm),\gamma''(0))\\
&=\omega(\gamma'(t_\pm),\gamma(0) - \gamma(t_\pm))\\&\qquad\qquad +\frac{\omega(\gamma(t_\pm)-\gamma(0),\gamma'(t_\pm))}{\omega(\gamma''(0),\gamma'(t_\pm))}\omega(\gamma'(t_\pm),\gamma''(0))\\
&=0.
\end{aligned}
\end{equation}
It remains to show that $\eta(t)=-\tfrac12t^2+O(t^3)$ near $t=0$. For that, we use Taylor expansion of $\gamma(t)$ and $\gamma'(t)$ and obtain
\begin{equation}\nonumber
\begin{aligned}
\eta(t)&=\frac{\omega(\gamma(t)-\gamma(0),\gamma'(t))}{\omega(\gamma''(0),\gamma'(t))}\\
&=\frac{\omega(\gamma'(0)t+\tfrac12\gamma''(0)t^2 +O(t^3),\gamma'(0)+\gamma''(0)t+\tfrac12\gamma'''(0)t^2+O(t^3))}{\omega(\gamma''(0),\gamma'(0)+\gamma''(0)t+\tfrac12\gamma'''(0)t^2+O(t^3))}\\
&=\frac{t^2-\tfrac12t^2+O(t^3)}{-1+\tfrac12\omega(\gamma''(0),\gamma'''(0))t^2+O(t^3)}\\
&=-\tfrac12t^2+O(t^3),
\end{aligned}
\end{equation} 
as required. \proofend 
\end{proof}


\section{Outer symplectic billiard correspondence for Lagrangian submanifolds} \label{sect:lag}

Motivated by examples studied by Fuchs--Tabachnikov in \cite{FT2}, we consider now the outer symplectic billiard correspondence for Lagrangian submanifolds in $\R^{2n}$. 

\subsection{The wall}\label{sec:Lagrangian_case_the_wall}

Let $L\subset \R^{2n}$ be a Lagrangian submanifold. We first discuss the definition of the wall, in analogy to the curve case, see Section \ref{subsect:near}. Consider the map
\begin{equation}\label{eqn:Psi_Lagrangian_case}
\begin{aligned}
\Psi:TL&\to \R^{2n}\\
(X,W)&\mapsto X+W
\end{aligned}
\end{equation}
where we recall that $T^\omega L=TL$. The image of $\Psi$ is the domain on which the outer symplectic billiard correspondence is defined. 

\begin{definition}
Let $L\subset \R^{2n}$ be a Lagrangian submanifold. We denote by $\Delta\subset TL$ the set of critical points of $\Psi$ and by $\Sigma=\Psi(\Delta)\subset\R^{2n}$ the set of singular values of the map $\Psi$. The set $\Sigma\subset\R^{2n}$ is called the wall. 
\end{definition}

\begin{remark}
{\rm
It turns out that in the Lagrangian case, the wall may be very ``thin'', i.e.,~may not be of codimension 1, see examples below.
}
\end{remark}

To simplify the discussion, we assume from now on that $L$ is globally given by a generating function $F:\R^n\to\R$, more precisely, 
\begin{equation}\nonumber
L=\big\{(q_1,\ldots,q_n, F_1(q),\ldots,  F_n(q))\mid q=(q_1,\ldots,q_n)\in\R^n\big\}\subset\R^{2n},
\end{equation}
where $F_i=\frac{\partial F}{\partial q_i}$. For short, we write 
$$
L=\{(q,\nabla F(q))\mid q\in \R^n\}.
$$
The symplectic structure at $(q,p)\in \R^{2n}$ is $\omega=\sum_idp_i\wedge dq_i$.
The tangent space of $L$ can be written as
\begin{equation}\nonumber
\begin{aligned}
T_{(q,\nabla F(q))}L&=\{(v,\mathrm{Hess}F(q)v)\mid v\in\R^n\}\\
&=\{(v_1,\ldots,v_n,\sum_iF_{1i}(q)v_i,\ldots,\sum_iF_{ni}(q)v_i)\}.
\end{aligned}
\end{equation}
To simplify subsequent notation, let us introduce the following:
\begin{equation}\nonumber
\begin{aligned}
\nabla^2F(x)v&:=\mathrm{Hess}F(x)v=\left(\sum_iF_{1i}(x)v_i,\ldots,\sum_iF_{ni}(x)v_i\right)\\
\nabla^3F(x)[v,\zeta]&:=\left(\sum_{ij}F_{1ij}(x)v_i\zeta_j,\ldots,\sum_{ij}F_{nij}(x)v_i\zeta_j\right),\\
\end{aligned}
\end{equation}
that is, $\nabla^2F(x):\R^n\to\R^n$ and $\nabla^3F(x):\R^n\times\R^n\to\R^n$. With this notation, we may write
\begin{equation}\nonumber
L=\{(q,\nabla F(q))\mid q\in \R^n\},\quad T_{(q,\nabla F(q))}L=\{(v,\nabla^2F(q)v)\mid v\in\R^n\}.
\end{equation}
%
%
%
Using the generating function $F$, the map $\Psi$, see \eqref{eqn:Psi_Lagrangian_case}, can be expressed as
\begin{equation}\nonumber
\begin{aligned}
\Psi:\R^n\times\R^n&\to \R^{2n}\\
(q,w)&\mapsto (q+w,\nabla F(q)+\nabla^2F(q)w).
\end{aligned}
\end{equation}
Note that the zero section $\mathcal{O}\subset TL$ corresponds to the set $\{w=0\}\subset\R^n\times\R^n$ and the latter space really should be read as $T^*\R^n$. We will not distinguish between $\Psi:TL\to\R^{2n}$ and $\Psi:\R^n\times\R^n\to\R^{2n}$. The differential of $\Psi$ at $(q,w)$ is
\begin{equation}\label{eqn:derivative_Psi}
\begin{aligned}
D\Psi(q,w):\R^n\times\R^n&\to \R^{2n}\\[1ex]
(\zeta,\eta)&\mapsto 
(\zeta+\eta,\nabla^2 F(q)(\zeta+\eta)+\nabla^3F(q)[\zeta,w]).
\end{aligned}
\end{equation}
Thus, by definition, $\Delta$, resp.~the wall $\Sigma$, is the set of points $(q,w)\in\R^n\times\R^n$, resp.~the set of points $\Psi(q,w)\in\R^{2n}$, for which the linear map
\begin{equation}\nonumber
\R^n\times\R^n\ni(\zeta,\eta) \mapsto (\zeta+\eta,\nabla^2 F(q)(\zeta+\eta)+\nabla^3F(q)[\zeta,w])\in \R^{2n}
\end{equation}
is not surjective or, equivalently, not injective. 

Before we give a characterization of $\Delta$ and the wall $\Sigma$, let us compare the Lagrangian case with the curve case, i.e., the special situation where $L=\gamma\subset\R^2$ is a one-dimensional Lagrangian submanifold with generating function $F:\R\to\R$, that is,
\begin{equation}\nonumber
\g(q)=(q,F'(q)), \quad \g'(q)=(1,F''(q)), \quad\g''(q)=(0,F'''(q))\quad\forall q\in\R
\end{equation}
and
\begin{equation}\nonumber
\nabla^2F(q)w=wF''(q), \quad \nabla^3F(q)[w,\zeta]=w\zeta F'''(q)\quad\forall w,\zeta\in\R.
\end{equation}
The derivative of the map 
$$\Psi(q,w)=(q+w,F'(q)+wF''(q))=\g(q)+w\g'(q)$$
 is
\begin{equation}\nonumber
D\Psi(q,w)[\zeta,\eta]=(\zeta+\eta)\g'(q)+w\zeta\g''(q)=(\zeta+\eta,(\zeta+\eta)F''(q)+w\zeta F'''(q)).
\end{equation}
Setting $\zeta=1=\eta$, we see that $D\Psi(q,w)[1,1]=2\g'(q)+w\g''(q)$, as expected. Moreover, $D\Psi(q,w)$ is surjective if and only if $w\neq0$ and $\g'(q),\g''(q)$ are not collinear. The latter means that $\g$ has no inflection points or, equivalently, that $\g$ is symplectically convex since we are in dimension 2. 

In the general Lagrangian case, we have the following characterization of the wall in terms of a generating function $F$.
\begin{lemma}\label{lem:description_of_wall_Lagrangian_case}
The set $\Delta$ of singular points of $\Psi:\R^n\times\R^n\to \R^{2n}$ is 
\begin{equation}\nonumber
\begin{aligned}
\Delta=\big\{(q,w)\in\R^n\times\R^n\mid\R^n\ni\zeta\mapsto \nabla^3F(q)[\zeta,w]\in\R^n\text{ is singular}\,\big\}.
\end{aligned}
\end{equation}
Therefore, a point $\Psi(q,w)=(q+w,\nabla F(q)+\nabla^2F(q)w)\in\R^{2n}$ is an element of $\Sigma$ if and only if the linear map
\begin{equation}\nonumber
\R^n\ni\zeta\mapsto \nabla^3F(q)[\zeta,w]\in\R^n
\end{equation}
does not have full rank. 
\end{lemma}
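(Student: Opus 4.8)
The plan is to analyze the differential $D\Psi(q,w)$ directly from the explicit formula \eqref{eqn:derivative_Psi} and reduce the surjectivity question to a statement about the single linear map $\zeta\mapsto\nabla^3F(q)[\zeta,w]$. First I would recall that $\Delta$ consists of those $(q,w)$ where $D\Psi(q,w)$ fails to be surjective (equivalently, since domain and target both have dimension $2n$, fails to be injective). So it suffices to compute the kernel of $D\Psi(q,w)$ and decide when it is nontrivial.

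\medskip

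Concretely, I would take $(\zeta,\eta)\in\ker D\Psi(q,w)$, which by \eqref{eqn:derivative_Psi} means
\begin{equation}\nonumber
\zeta+\eta=0\quad\text{and}\quad \nabla^2F(q)(\zeta+\eta)+\nabla^3F(q)[\zeta,w]=0.
\end{equation}
The first equation gives $\eta=-\zeta$, and substituting into the second collapses the $\nabla^2 F(q)(\zeta+\eta)$ term to zero, leaving precisely
\begin{equation}\nonumber
\nabla^3F(q)[\zeta,w]=0.
\end{equation}
Thus a nonzero kernel element $(\zeta,-\zeta)$ of $D\Psi(q,w)$ corresponds exactly to a nonzero $\zeta$ killed by the map $\zeta\mapsto\nabla^3F(q)[\zeta,w]$, and conversely any such $\zeta$ yields the kernel vector $(\zeta,-\zeta)$. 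This sets up a bijection between $\ker D\Psi(q,w)$ and $\ker\big(\zeta\mapsto\nabla^3F(q)[\zeta,w]\big)$, so $D\Psi(q,w)$ is singular if and only if the map $\zeta\mapsto\nabla^3F(q)[\zeta,w]$ is singular, i.e.,~does not have full rank. This is exactly the claimed description of $\Delta$, and the description of $\Sigma=\Psi(\Delta)$ follows immediately since $\Sigma$ is by definition the image of $\Delta$ under $\Psi$.

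\medskip

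The computation is essentially routine linear algebra once \eqref{eqn:derivative_Psi} is in hand, so I do not anticipate a serious obstacle; the only point requiring a small amount of care is the bookkeeping that the substitution $\eta=-\zeta$ forces the $\nabla^2F$ term to vanish \emph{identically}, so that no condition on $\nabla^2F$ survives and the wall depends only on the third-order data $\nabla^3F$. A sanity check against the already-worked one-dimensional curve case is worthwhile: there $\nabla^3F(q)[\zeta,w]=w\zeta F'''(q)$, which is singular precisely when $w=0$ or $F'''(q)=0$, matching the earlier observation that $D\Psi(q,w)$ is surjective exactly when $w\neq 0$ and $\gamma'(q),\gamma''(q)$ are not collinear (the latter being $F'''(q)\neq 0$ in that parametrization). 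This consistency confirms that the reduction to $\nabla^3F$ is the correct outcome.
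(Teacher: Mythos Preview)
Your argument is correct and is essentially identical to the paper's own proof: both compute the kernel of $D\Psi(q,w)$ from \eqref{eqn:derivative_Psi}, use $\zeta+\eta=0$ to eliminate the $\nabla^2F$ term, and identify the remaining condition $\nabla^3F(q)[\zeta,w]=0$ with singularity of the map in question. Your added sanity check against the curve case is a nice touch but not needed for the proof.
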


\begin{remark}
{\rm
In particular, $\{w=0\}\subset\Delta$. This corresponds to the fact that the zero section $\mathcal{O}\subset TL$ is contained in $\Delta$ and therefore, $L\subset\Sigma\subset\R^{2n}$. In certain very specific examples described below, we actually have $L=\Sigma$.
} 
\end{remark}

\begin{proof}
The map $\nabla^3F(q)[\cdot,w]$ vanishes along  $\{w=0\}$ and, thus, is certainly singular.

We have to show that $D\Psi(q,w):\R^n\times\R^n\to \R^{2n}$, see \eqref{eqn:derivative_Psi}, is not surjective, or equivalently, not injective if and only if $\R^n\ni\zeta\mapsto \nabla^3F(q)[\zeta,w]\in\R^n$ is singular. For that assume that there exist $\zeta,\eta\in\R^n$ with
\begin{equation}\nonumber
(\zeta+\eta,\nabla^2 F(q)(\zeta+\eta)+\nabla^3F(q)[\zeta,w])=(0,0).
\end{equation}
We immediately conclude that $\zeta+\eta=0$ and the condition reduces to
\begin{equation}\nonumber
\nabla^3F(q)[\zeta,w]=0,
\end{equation}
that is, $\zeta\mapsto \nabla^3F(q)[\zeta,w]$ has non-trivial kernel. Conversely let $\zeta$ be in the kernel of $\nabla^3F(q)[\cdot,w]$ then $D\Psi(q,w)[\zeta,-\zeta]=0$. \proofend 
\end{proof}

\paragraph{Relation to the Lagrangian Grassmannian.}
We make a brief excursion and arrive at Lemma \ref{lem:description_of_wall_Lagrangian_case} from a more differential geometric perspective. Let us consider again a Lagrangian embedding $j:L\hookrightarrow\R^{2n}$ (or even an immersion) and denote by $\Lambda_n$ the Grassmannian of (unoriented) Lagrangian subspaces of $\R^{2n}$. Then $j$ induces a natural map
\begin{equation}\nonumber
\begin{aligned}
G:L&\to\Lambda_n\\
x&\mapsto T_xL.
\end{aligned}
\end{equation}
We set $\ell:=T_xL$ for some fixed $x\in L$ and consider the differential of $G$, i.e.,
\begin{equation}\nonumber
D_xG:\ell\to T_\ell\Lambda_n.
\end{equation}
Using the symplectic form on $\R^{2n}$ we have an identification
\begin{equation}\nonumber
T_\ell\Lambda_n\stackrel{\omega}{=}\{A\in\mathrm{Hom}(\ell,\ell^*)\mid A=A^*\}=S^2\ell^*\subset(\ell^*)^{\otimes 2}.
\end{equation}
We may therefore regard the linear map $D_xG:\ell\to T_\ell\Lambda_n\subset(\ell^*)^{\otimes 2}$ as an element
\begin{equation}\nonumber
D_xG\in(\ell^*)^{\otimes 3}.
\end{equation}
In terms of a generating function $F$ for $L$ this is identified with
\begin{equation}\nonumber
D_xG=\nabla^3F(q)
\end{equation}
where $x=(q,\nabla F(q))$. We recall that $\nabla^3F(q):\R^n\times\R^n\to\R^n$ is, as a third derivative, indeed a tri-linear form on $\R^n$ with values in $\R$ (in disguise). This gives the expression $\nabla^3F(q)$ a more differential geometric meaning.

\subsection{Three examples in dimension 4} \label{subsect:3examples}

We begin by discussing three instructive examples. The \textbf{first example} is the remarkable Example 3.2 / 3.5 from \cite{FT2} of a quadratic Lagrangian submanifold $L\subset \R^4$. This Lagrangian submanifold is given by the homogeneous cubic generating function 
\begin{equation}\nonumber
F(q_1,q_2):=q_1^2q_2+q_1q_2^2 : \R^2\to\R,
\end{equation} 
that is,
\begin{equation}\nonumber
L=\{(q,\nabla F(q))\mid q\in\R^2\}=\{(q_1,q_2,2q_1q_2+q_2^2,q_1^2+2q_1q_2)\mid(q_1,q_2)\in\R^2\}.
\end{equation}
For convenience, let us collect the various derivatives of $F$.
\begin{equation}\nonumber
\begin{aligned}
\nabla F(q)&=(2q_1q_2+q_2^2,q_1^2+2q_1q_2),\\[2ex]
\nabla^2 F(q)w&=\big(2q_2w_1+2(q_1+q_2)w_2, 2(q_1+q_2)w_1+2q_1w_2\big)\\
&=
\begin{pmatrix}
2q_2 & 2(q_1+q_2)\\
2(q_1+q_2) & 2q_1 
\end{pmatrix}
\begin{pmatrix}
w_1\\
w_2
\end{pmatrix},\\[2ex]
\nabla^3F(q)[w,\zeta]&=\big(2w_1\zeta_2+2w_2(\zeta_1+\zeta_2), 2w_1(\zeta_1+\zeta_2)+2w_2\zeta_1 \big)\\
&=\begin{pmatrix}
2w_2 & 2(w_1+w_2)\\
2(w_1+w_2) & 2w_1 
\end{pmatrix}
\begin{pmatrix}
\zeta_1\\
\zeta_2
\end{pmatrix},\\[2ex]
\nabla^4 F&\equiv 0.
\end{aligned}
\end{equation}
Since $F$ is cubic we have the identity $\nabla^3F(q)[w,\zeta]=\nabla^2F(w)\zeta$. 

As in \cite{FT2}, we show that, for the example $F=q_1^2q_2+q_1q_2^2$, the wall $\Sigma$ is as small as possible, namely $\Sigma=L$. We begin by determining the set $\Delta$ of singular points of $\Psi$.
For that recall from Lemma \ref{lem:description_of_wall_Lagrangian_case} that $(q,w)\in\Delta$ if and only if
\begin{equation}\nonumber
\R^n\ni\zeta\mapsto \nabla^3F(q)[\zeta,w]\in\R^n
\end{equation}
does not have full rank. 
This map is independent of $q$ and is described by the matrix
\begin{equation}\nonumber
\begin{pmatrix}
2w_2 & 2(w_1+w_2)\\
2(w_1+w_2) & 2w_1 
\end{pmatrix}.
\end{equation}
Its determinant (dropping a factor 4)
\begin{equation}\nonumber
w_1w_2-(w_1+w_2)^2=-w_1^2-w_1w_2-w_2^2
\end{equation}
vanishes if and only if $w_1=w_2=0$. In other words, in this case $\Delta=\{w=0\}$, and thus $\Sigma=L$ is as small as possible. We will see below that, in this example, the outer symplectic billiard correspondence gives rise, after an initial choice, to a map.

\bigskip

We contrast this with the \textbf{second example}. Here, let $L$ be an affine Lagrangian subspace of $\R^4$. Then the map $\Psi(X,W)=X+W$ takes $TL$ into $L$. In particular, $\Delta=TL$ and $\Sigma=L$. Here again, the wall is as small as possible, but for completely different reasons. The set $\Delta$ is maximal and the outer symplectic billiard correspondence does not gives rise to a well-defined map.

\bigskip

We present a \textbf{third example} before treating the general cubic case in dimension 4. Consider $F(q)=q_1^3+q_2^3$ and the corresponding Lagrangian 
\begin{equation}\nonumber
L=\{(q,\nabla F(q))\mid q\in\R^2\}.
\end{equation}
Then, up to reordering the coordinates, $L\cong\gamma\times\gamma$ is the product of two copies of the curve
\begin{equation}\nonumber
\g:=\{(t,3t^2)\mid t\in \R\}\subset\R^2.
\end{equation}
We again list the derivatives
\begin{equation}\nonumber
\begin{aligned}
\nabla F(q)&=(3q_1^2,3q_2^2),\\
\nabla^2F(q)w&=(6q_1w_1,6q_2w_2),\\
\nabla^3F(q)[\zeta,w]&=(6w_1\zeta_1,6\zeta_2w_2).
\end{aligned}
\end{equation}
In particular, $\zeta\mapsto\nabla^3F(q)[\zeta,w]=\nabla^2F(w)\zeta$ is described by the matrix
\begin{equation}\nonumber
\begin{pmatrix}
6w_1 & 0\\
0 & 6w_2
\end{pmatrix}.
\end{equation}
Its determinant $36w_1w_2$ vanishes if and only if $w_1=0$ or $w_2=0$. The case $w_1=0$ corresponds to points in $\Sigma$ of the form
\begin{equation}\nonumber
\begin{aligned}
(q_1,q_2+w_2,\nabla F(q)+\nabla^2F(q)[0,w_2])&=(q_1,q_2+w_2,3q_1^2,3q_2^2+6q_2w_2)\\
&=(q_1,q_2+w_2,3q_1^2,3(q_2+w_2)^2-3w_2^2),
\end{aligned}
\end{equation}
where $q=(q_1,q_2)\in\R^2$ and $w_2\in\R$ are arbitrary, and similarly for the case $w_2=0$.
Therefore, again up to reordering of coordinates,
\begin{equation}\nonumber
\Sigma=(\gamma\times\gamma^-)\cup(\gamma^-\times\gamma)\subset \R^4,
\end{equation}
where
\begin{equation}\nonumber
\gamma^-:=\{(t,u)\mid u\leq 3t^2\}\subset \R^2.
\end{equation}
In this case, $\Sigma$ really has codimension 1 (with singularities). In $\R^4\setminus \Sigma$ there exist points which are not in the image of the map $\Psi$ and points which have four preimages, namely, those in $\gamma^-\times\gamma^-$.

\subsection{Cubic generating functions and dimension 4}

Before specifying to dimension 4, we use a convenient reformulation of finding preimages of the map $\Psi$ for a cubic generating functions. Recall that the map $\Psi:TL\to\R^{2n}$ is given by $(X,W)\mapsto X+W$ which, in terms of a generating function $F:\R^n\to\R$, is the map 
$$
\R^n\times\R^n\ni(q,w)\mapsto (q+w,\nabla F(q)+\nabla^2F(q)w)\in\R^n\times\R^n,
$$ 
as explained in Section \ref{sec:Lagrangian_case_the_wall}. The task is, given $(Q,W)$, to find $(q,w)$ such that
\begin{equation}\nonumber
(Q,W)=(q+w,\nabla F(q)+\nabla^2F(q)w),
\end{equation}
which immediately simplifies to
\begin{equation}\nonumber
W=\nabla F(q)+\nabla^2F(q)[Q-q],
\end{equation}
where we used that $w=Q-q$.

\begin{lemma}\label{lem:preimage_Psi_for_cubic_generating_function}
Let $F$ be a homogeneous cubic polynomial. Then the equation 
\begin{equation}\nonumber
W=\nabla F(q)+\nabla^2F(q)[Q-q]
\end{equation}
is equivalent to
\begin{equation}\nonumber
\nabla F(q+w)=\nabla F(w)+W
\end{equation}
 and to
\begin{equation}\nonumber
\nabla F(w)=\nabla F(Q)-W,
\end{equation}
where $Q=q+w$.
\end{lemma}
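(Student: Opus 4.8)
The plan is to reduce all three conditions to a single ``addition formula'' for the gradient of a homogeneous cubic, and then read off the equivalences by elementary rearrangement. The guiding observation is that if $F$ is homogeneous of degree $3$, then $\nabla F$ is componentwise homogeneous of degree $2$, $\nabla^2F$ is homogeneous of degree $1$ (hence linear in its base point), and $\nabla^3F$ is constant. Consequently the Taylor expansion of the vector-valued map $q\mapsto\nabla F(q)$ terminates exactly at second order, and I would begin by writing this exact expansion:
\begin{equation}\nonumber
\nabla F(q+w)=\nabla F(q)+\nabla^2F(q)w+\tfrac12\,\nabla^3F(q)[w,w].
\end{equation}
No remainder appears, precisely because $\nabla F$ is quadratic.

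Next I would simplify the third-order term. Using the identity $\nabla^3F(q)[w,\zeta]=\nabla^2F(w)\zeta$ recorded earlier (a restatement of the constancy of $\nabla^3F$ for cubic $F$), with $\zeta=w$, gives $\nabla^3F(q)[w,w]=\nabla^2F(w)w$. Since $\nabla F$ is homogeneous of degree $2$, Euler's identity applied to each component yields $\nabla^2F(w)w=2\nabla F(w)$, so the third-order term equals exactly $\nabla F(w)$. Substituting back produces the key formula
\begin{equation}\nonumber
\nabla F(q+w)=\nabla F(q)+\nabla^2F(q)w+\nabla F(w),
\end{equation}
valid for every homogeneous cubic $F$ and all $q,w$.

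Finally I would harvest the equivalences. Recalling $w=Q-q$, so that $q+w=Q$, the key formula rewrites the right-hand side of the defining equation as $\nabla F(q)+\nabla^2F(q)w=\nabla F(q+w)-\nabla F(w)$. Hence $W=\nabla F(q)+\nabla^2F(q)[Q-q]$ is equivalent to $\nabla F(q+w)=\nabla F(w)+W$, the first reformulation; and substituting $q+w=Q$ and rearranging gives $\nabla F(w)=\nabla F(Q)-W$, the second. I do not anticipate a genuine obstacle here: the content is purely algebraic, requiring no invertibility or nondegeneracy hypotheses, and the only point demanding care is the bookkeeping of the numerical constants — checking that the factor $\tfrac12$ from the Taylor term and the factor $2$ from Euler's identity cancel, so that the third-order contribution is exactly $\nabla F(w)$ rather than a multiple of it.
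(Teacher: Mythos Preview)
Your proposal is correct and takes essentially the same approach as the paper: both derive the key ``addition formula'' $\nabla F(q+w)=\nabla F(q)+\nabla^2F(q)w+\nabla F(w)$ using Euler's identity $\nabla^2F(w)w=2\nabla F(w)$ together with the cubic relation $\nabla^3F(q)[w,\zeta]=\nabla^2F(w)\zeta$, then read off the equivalences. The only cosmetic difference is that the paper obtains the formula by applying Euler directly to $2\nabla F(q+w)=\nabla^2F(q+w)[q+w]$ and expanding bilinearly, whereas you reach it via the exact second-order Taylor expansion of $\nabla F$; the ingredients and content are the same.
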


\begin{proof}
We combine Euler's formula
\begin{equation}\nonumber
2\nabla F(q)=\nabla^2F(q)q
\end{equation}
with 
\begin{equation}\nonumber
\nabla^3F(\bar q)[\zeta,\xi]=\nabla^2F(\xi)\zeta=\nabla^2F(\zeta)\xi \qquad\forall \bar q,\xi,\zeta\in\R^2
\end{equation}
to conclude that
\begin{equation}\nonumber
\begin{aligned}
2\nabla F(q+w)&=\nabla^2F(q+w)[q+w]\\
&=\nabla^2F(w)[w]+\nabla^2F(q)[q]+\nabla^2F(w)[q]+\nabla^2F(q)[w]\\
&=\nabla^2F(w)[w]+\nabla^2F(q)[q]+2\nabla^2F(q)[w]\\
&=2\nabla F(w)+2\nabla F(q)+2\nabla^2F(q)[w].
\end{aligned}
\end{equation}
This proves the lemma. \proofend
\end{proof}

We will need another lemma. Let $A_1, A_2$ be two non-degenerate symmetric $2\times 2$ matrices. We assume that $A_1$ and $A_2$ are not proportional. Consider the system of equations
\begin{equation}\label{eqn:two_central_conics}
\left\{\;
\begin{aligned}
A_1 w \cdot w&=r_1,\\
A_2 w \cdot w&=r_2,
\end{aligned}\right.
\end{equation}
where $w\in \R^2$ and $\cdot$ denotes the Euclidean inner product on $\R^2$. We are concerned with the number of solutions as the right-hand side $(r_1,r_2)\in \R^2$ varies.

\begin{lemma} \label{lm:central_conics}
The pencil $t_1 A_1 + t_2 A_2,\ (t_1,t_2)\neq (0,0)$, does not contain degenerate matrices if and only if the number of solutions to the system \eqref{eqn:two_central_conics} is two for all $(r_1,r_2)\neq (0,0)$. Otherwise, for a generic choice of $(r_1,r_2)$, the number of solutions is either zero or four.
\end{lemma}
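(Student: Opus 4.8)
The plan is to reduce the counting problem to a normal form for the pair $(A_1,A_2)$, the form being dictated by the sign of the discriminant of the binary quadratic form
$$
D(t_1,t_2):=\det(t_1A_1+t_2A_2)=\det A_1\,t_1^2+\kappa\,t_1t_2+\det A_2\,t_2^2,
$$
where $\kappa$ is the mixed coefficient. Since $A_1,A_2$ are non-degenerate, $D$ has nonzero leading and trailing coefficients, and the pencil contains a degenerate matrix exactly when $D$ has a real root in $\RP^1$, i.e. when $\mathrm{disc}\,D=\kappa^2-4\det A_1\det A_2\ge 0$; equivalently, when $A_1^{-1}A_2$ has a real eigenvalue. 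Thus ``no degenerate matrix in the pencil'' is the same as $\mathrm{disc}\,D<0$, i.e. $A_1^{-1}A_2$ has a conjugate pair of non-real eigenvalues. I would also keep in mind the geometric picture obtained from the substitution $(u,v,s)=(w_1^2,w_1w_2,w_2^2)$: the two equations become affine-linear, cutting out a line of fixed direction (independent of $(r_1,r_2)$) in $\R^3$, and solutions correspond to its intersections with the half-cone $\{v^2=us,\ u\ge0\}$. The sign of $\mathrm{disc}\,D$ is exactly the sign of the cone's quadratic form $v^2-us$ on that direction, which explains the trichotomy below. Throughout, solutions come in pairs $\pm w$.

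For $\mathrm{disc}\,D<0$ I would produce an explicit normal form. First replace $A_2$ by a linear combination $A_2'=(A_2-aA_1)/b$ so that $J:=A_1^{-1}A_2'$ satisfies $J^2=-I$; this is an invertible change of the right-hand side $(r_1,r_2)$, hence harmless for the count. Choosing the basis $\{e_1,Je_1\}$ (independent, since $J$ has no real eigenvector) and normalizing by a rotation and a scaling, one brings $A_1$ to $\mathrm{diag}(1,-1)$ and $A_2'$ to $\begin{pmatrix}0&-1\\-1&0\end{pmatrix}$, so that $Q_1=w_1^2-w_2^2$ and $Q_2'=-2w_1w_2$. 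The system then reads $(w_1+iw_2)^2=r_1-ir_2$, and a nonzero complex number has exactly two square roots; hence there are exactly two solutions for every $(r_1,r_2)\ne(0,0)$. This is the forward implication.

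For $\mathrm{disc}\,D>0$ the matrix $A_1^{-1}A_2$ has two distinct real eigenvalues $\lambda_1\ne\lambda_2$, with eigenvectors $v_1,v_2$ satisfying $A_2v_i=\lambda_iA_1v_i$. The identity $(\lambda_1-\lambda_2)\,v_1^{\top}A_1v_2=0$ (which follows from the symmetry of $A_1,A_2$) forces $v_1^{\top}A_1v_2=v_1^{\top}A_2v_2=0$, so in the basis $\{v_1,v_2\}$ both forms are diagonal. In these coordinates the system is linear in $(w_1^2,w_2^2)$ with invertible coefficient matrix, so $(w_1^2,w_2^2)$ is uniquely determined by $(r_1,r_2)$ via a linear isomorphism. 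One then gets four solutions when both entries are positive and none when one is negative; as $(r_1,r_2)$ ranges over $\R^2$ all sign patterns occur, so the count is $0$ or $4$ for generic $(r_1,r_2)$, and in particular some right-hand side yields $0$ solutions. Together with the next case, this proves the converse in the form: if the pencil has a degenerate member, then the count is not always two.

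The step I expect to be the main obstacle -- and the one most easily overlooked -- is the boundary case $\mathrm{disc}\,D=0$. Here $A_1^{-1}A_2$ has a repeated real eigenvalue which, since $A_1,A_2$ are not proportional, is non-semisimple; the pair is not simultaneously diagonalizable and instead reduces to a single Jordan block, for which a direct computation (using a model such as $Q_1=2w_1w_2$, $Q_2=w_1^2+2\lambda w_1w_2$) shows the generic number of solutions is $0$ or $2$, not $0$ or $4$. Geometrically this is the light-like direction, where the line meets the half-cone at most once. Thus the equivalence ``no degenerate matrix $\iff$ always two solutions'' holds as stated, while the ``$0$ or $4$'' conclusion is precisely the generic sub-case of ``otherwise'' in which the pencil has \emph{two distinct} degenerate members; a fully careful write-up should either exclude the tangential case or record that it produces $0$ or $2$. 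Apart from this, the only remaining work is to check that the two reductions are genuine bijections on solution sets and on the punctured $(r_1,r_2)$-plane, which is routine.
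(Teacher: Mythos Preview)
Your argument is correct and takes a genuinely different route from the paper. The paper's proof is entirely geometric: it views the system as the intersection of two central conics, observes that tangency of the conics is equivalent to degeneracy of some $t_1A_1+t_2A_2$, and then runs a pictorial case analysis (ellipse/hyperbola, and for two hyperbolas, nested versus interlacing asymptotes) to read off the intersection counts. Your approach is algebraic: you classify pairs $(A_1,A_2)$ by the sign of the discriminant of $D(t_1,t_2)=\det(t_1A_1+t_2A_2)$, equivalently by the eigenvalue type of $A_1^{-1}A_2$, and reduce to explicit normal forms. The complex-eigenvalue case becomes the squaring map $(w_1+iw_2)^2=r_1-ir_2$, which is transparently two-to-one on $\C\setminus\{0\}$; the distinct-real-eigenvalue case becomes simultaneous diagonalization, where the system is linear in $(w_1^2,w_2^2)$ and the count is visibly $0$ or $4$.

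Your approach buys more precision. In particular, you correctly isolate the boundary case $\mathrm{disc}\,D=0$ (a repeated real eigenvalue, necessarily a Jordan block since $A_1,A_2$ are not proportional), where the generic count is $0$ or $2$ rather than $0$ or $4$; e.g.\ with $Q_1=2w_1w_2$, $Q_2=w_1^2+2w_1w_2$ one solves $w_1^2=r_2-r_1$, $w_2=r_1/(2w_1)$. Geometrically this is the case where the two hyperbolas share an asymptote, which the paper's nested/interlacing dichotomy silently skips. This does not affect the ``if and only if'' clause (in that case some $(r_1,r_2)$ gives zero solutions, so ``always two'' fails), nor the application in Proposition~\ref{prop:abcd_example_and_wall} (which only uses the two strict-inequality regimes), but it does show that the second sentence of the lemma should, strictly speaking, be read as covering the generic ``otherwise'' situation in which the pencil has two distinct degenerate members.
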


\begin{proof}
We start with two trivial observations. First, if $w$ is a solution to equation \eqref{eqn:two_central_conics} then so is $-w$, and if $(r_1,r_2)\neq (0,0)$ then $w=0$ is not a solution. That is, the number of solutions we are interested in is always even. Moreover, if $A_i$ is positive resp.~negative definite for one (or both) $i=1,2$ then $A_iw\cdot w$ is positive resp.~negative for all $w\neq0$. In particular, for $r_i$ negative resp.~positive there are then no solutions to equation \eqref{eqn:two_central_conics}.

Geometrically, our system of equations describes the intersection of two non-degenerate central conics. As one varies the right-hand sides, these conics vary in homothetic families. Moreover, the vector $A_1 w$ is normal to the conic $A_1 w \cdot w=r_1$ at the point $w$, and likewise for the second conic. In particular, the two conics are tangent at $w$ if and only if the normals $A_1 w$ and $A_2 w$ are collinear. Note that the number of solutions to equation \eqref{eqn:two_central_conics}, i.e., the number of intersection points, can only change when the two conics become tangent, that is, when $A_1 w$ and $A_2 w$ are collinear. In other words, the number of solutions to equation \eqref{eqn:two_central_conics} changes if and only if, for some $(t_1,t_2)\in\R^2$, the matrix $t_1A_1+t_2A_2$ has non-zero kernel.

Generically, two central conics intersect in 0, 2, or 4 points. If one of the conics is an ellipse (i.e., if $A_1$ or $A_2$ is definite), then the number of transverse intersections is either 0 or 4 but never 2. Moreover, 0 and 4 both occur when varying $(r_1,r_2)$, see Figure \ref{twocon}. 
\begin{figure}[hbtp] 
\centering
\includegraphics[height=1.5in]{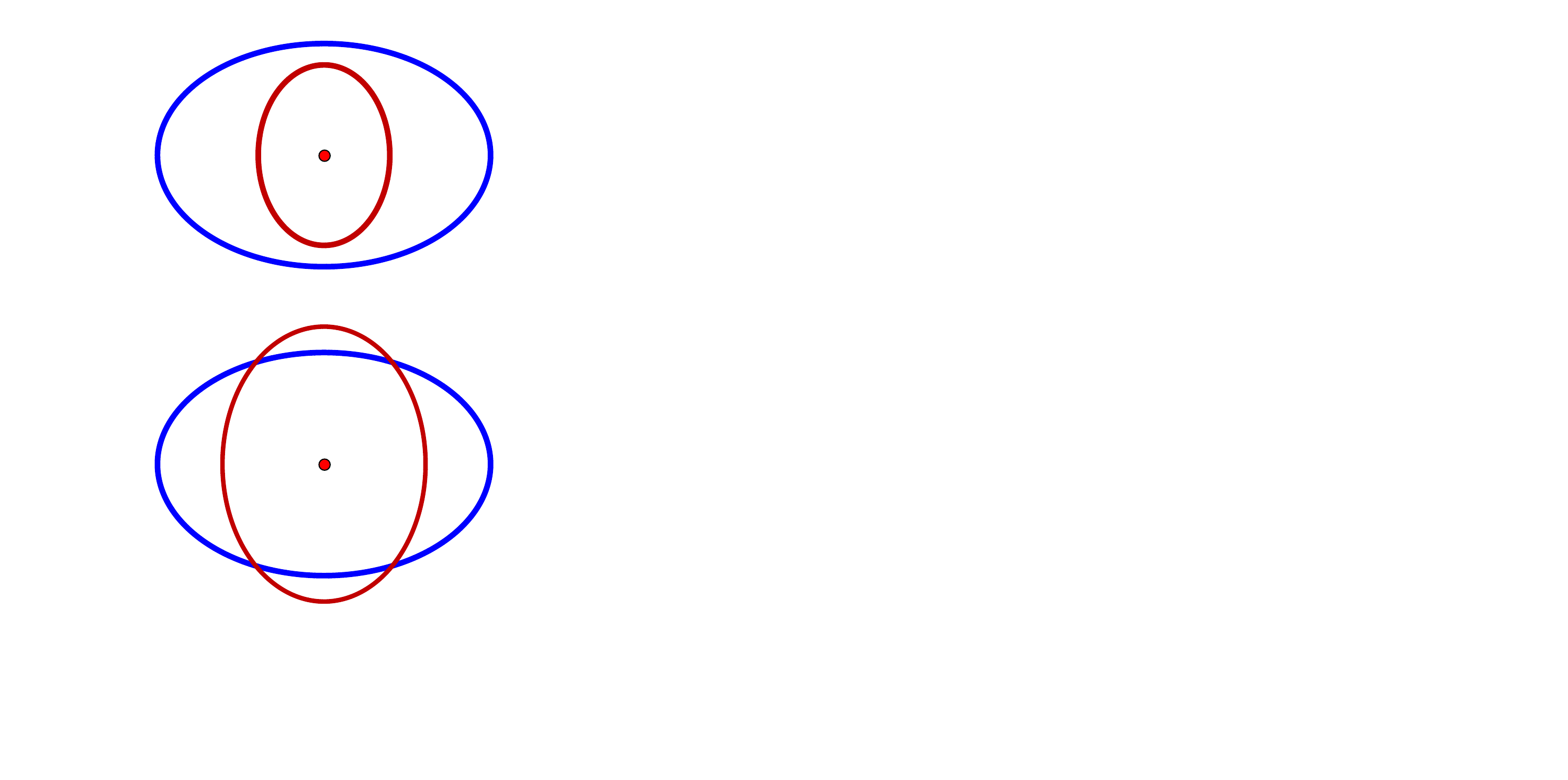}\qquad \qquad
\includegraphics[height=1.5in]{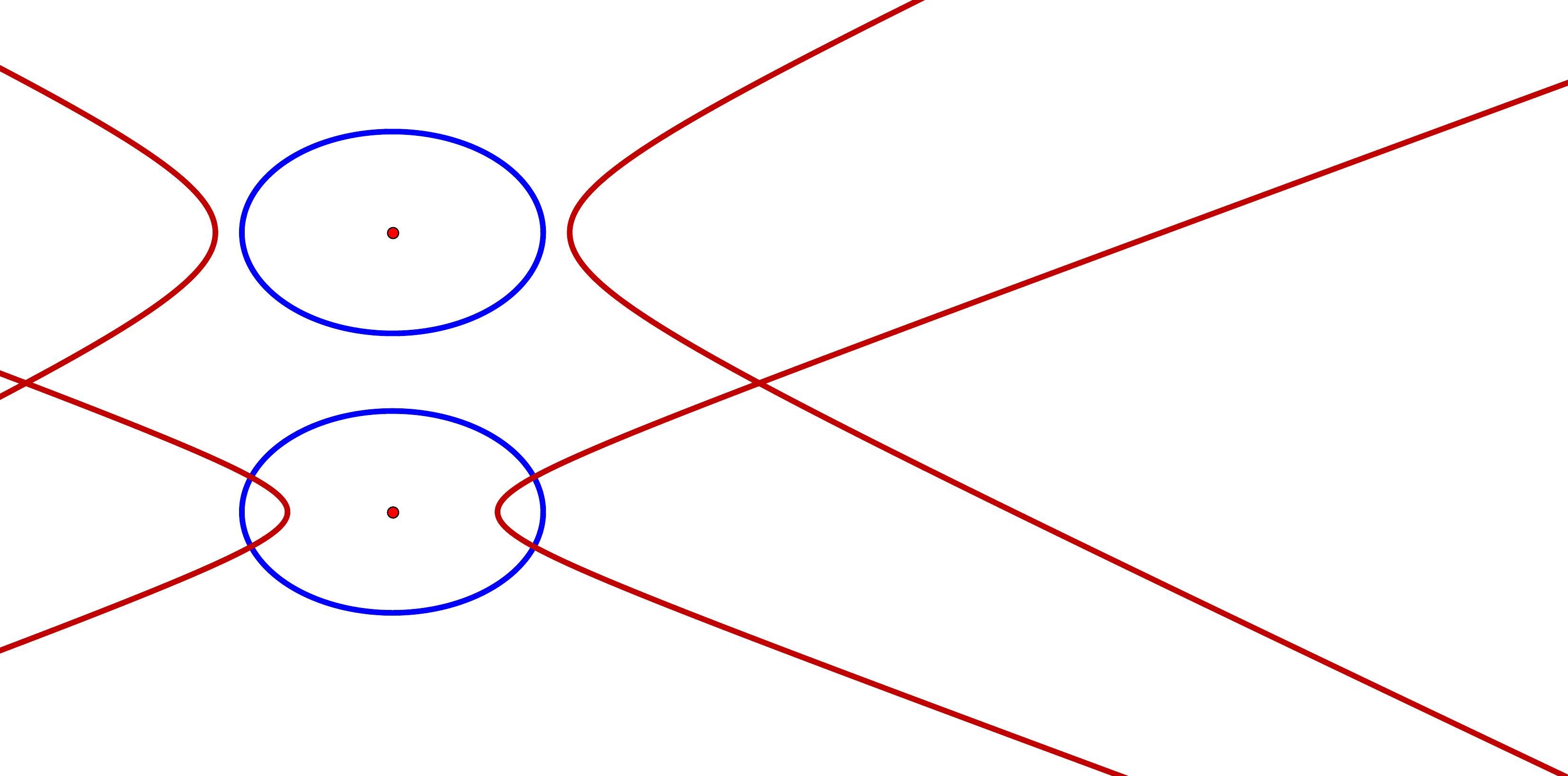}
\caption{Left: two central ellipses; right: a central ellipse and a hyperbola.}
\label{twocon}
\end{figure}

Thus, we are left with the case of two hyperbolas. This has two subcases, depending on the relative locations of the asymptotic lines of each hyperbola to each other. 
\begin{figure}[ht] 
\centering
\includegraphics[height=1.5in]{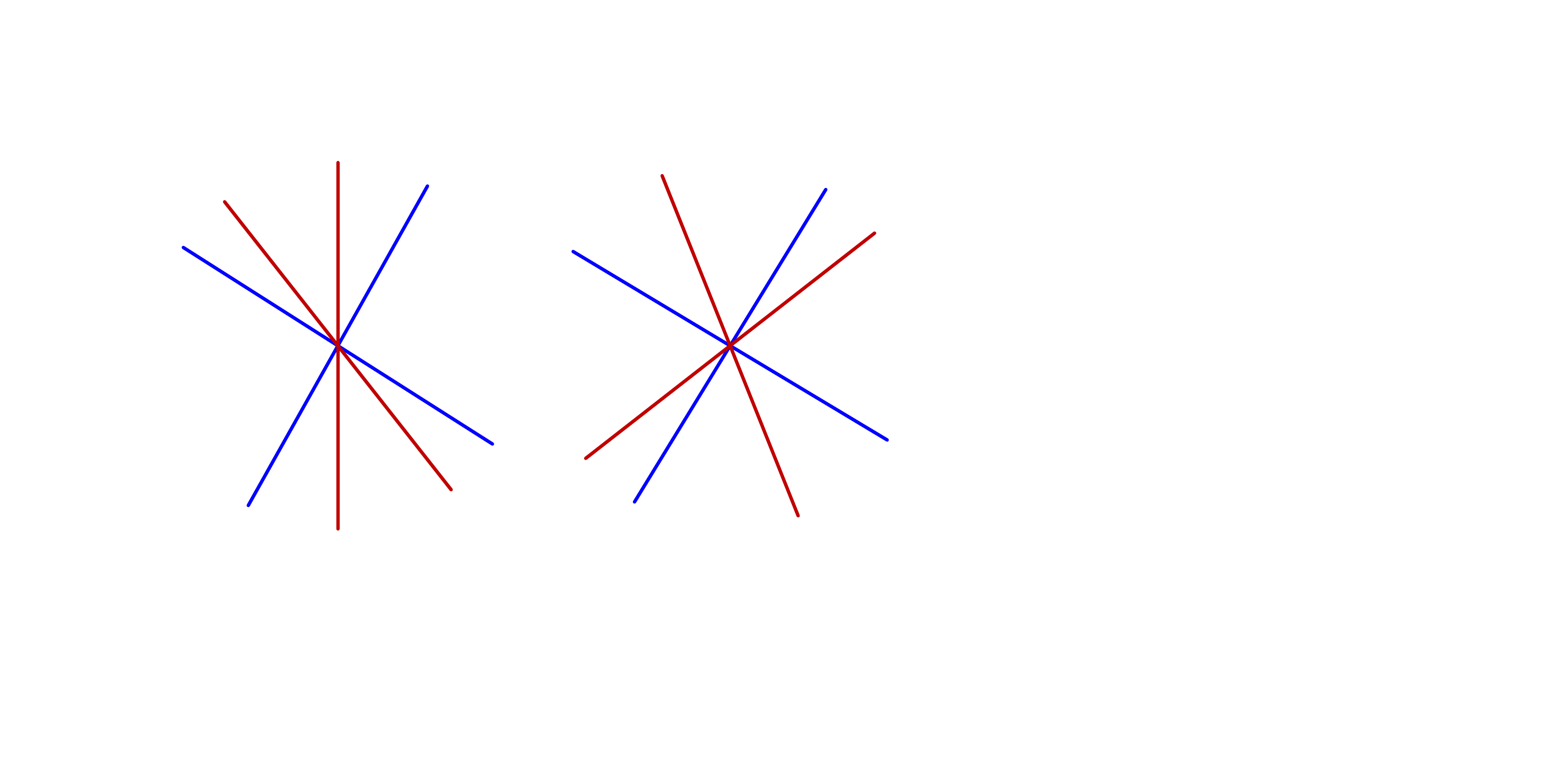}\qquad \qquad
\includegraphics[height=1.5in]{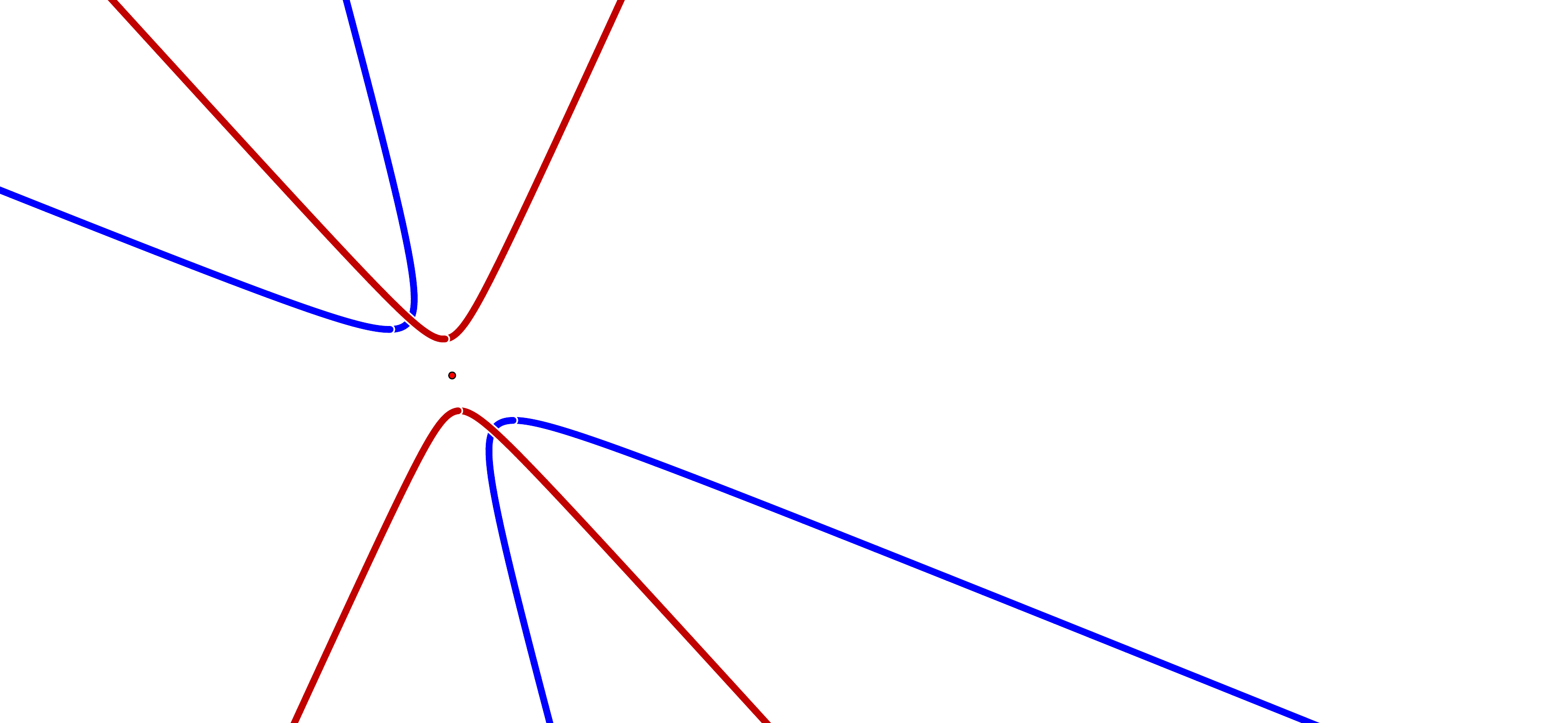}
\caption{Left: nested and interlacing pairs of asymptotes; right: interlacing hyperbolas.}
\label{asymptotes}
\end{figure}
They can either be nested, i.e., the two asymptotic lines of one hyperbola partition $\R^2$ into four sectors, two of which contain the asymptotic lines of the other hyperbola, or, otherwise, the asymptotic lines are interlacing, see Figure \ref{asymptotes}.

If the two hyperbolas are nested, then depending on the choices of $(r_1,r_2)$, there are 0 or 4 intersection points and both can be realized, see Figure \ref{cases}.
\begin{figure}[hbtp] 
\centering
\includegraphics[height=1.5in]{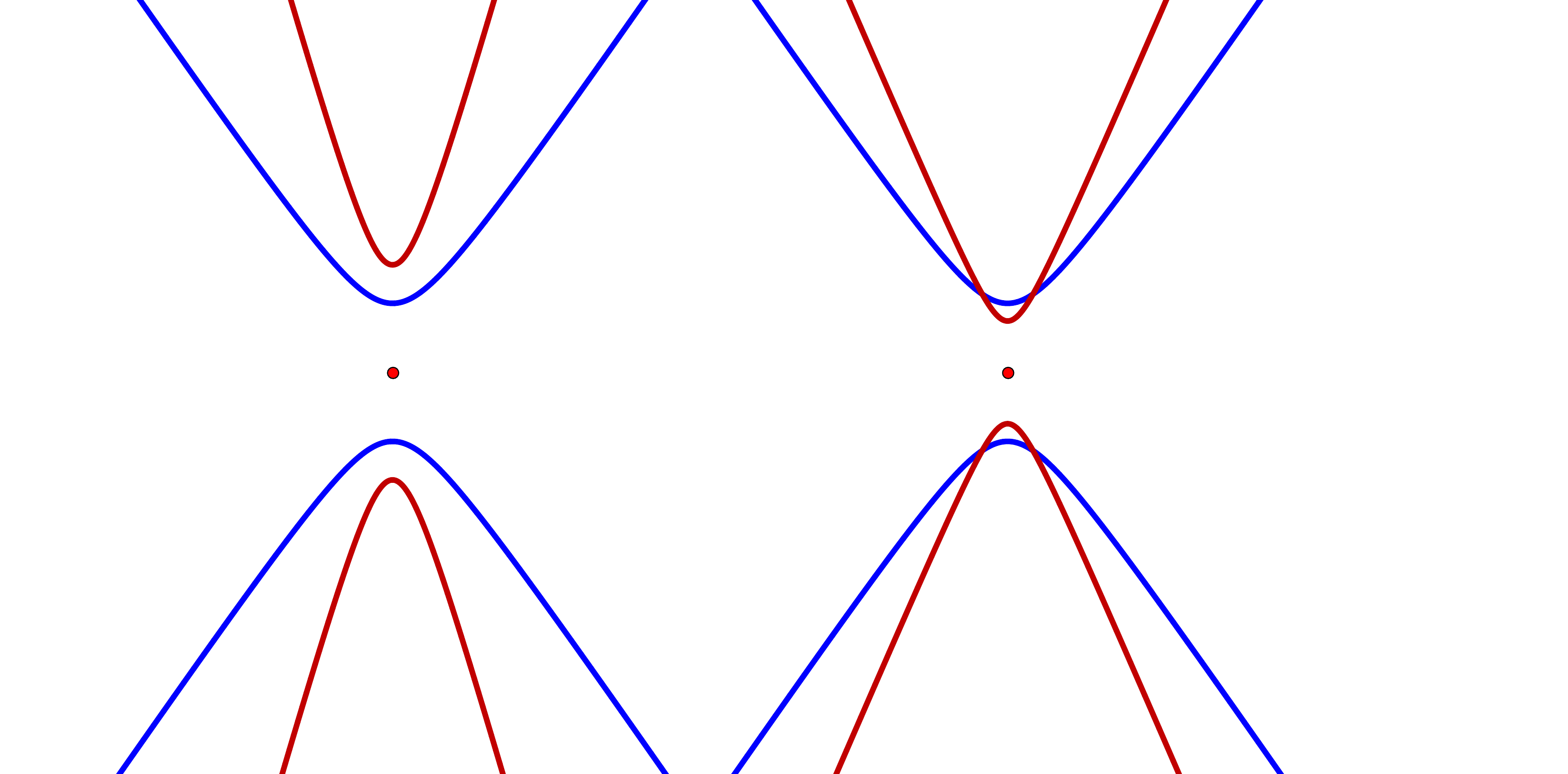}
\caption{Nested hyperbolas: four intersections on the right and none on the left.}
\label{cases}
\end{figure}

In the interlacing case, the number of solutions to equation \eqref{eqn:two_central_conics} is always two (unless $(r_1,r_2)=(0,0)$). Indeed, it does not change as the right-hand side varies since the two hyperbolas always remain transverse to each other, see Figure \ref{asymptotes} again. Accordingly, the normal vectors $A_1 w$ and $A_2 w$ are not collinear for all nonzero $w$. In other words, the matrix $t_1 A_1 + t_2 A_2$ has trivial kernel for all $(t_1,t_2)\neq (0,0)$. This finished the proof. \proofend
\end{proof}

Motivated by the discussion from the previous section we consider now cubic generating functions in two variables. That is, we consider the generating function
\begin{equation}\label{eq:homogeneous_cubic_generating_function_dim_4}
F(q_1,q_2)=aq_1^3+bq_1^2q_2+cq_1q_2^2+dq_2^3
\end{equation}
for some $a,b,c,d\in\R$ and the corresponding Lagrangian submanifold
\begin{equation}\nonumber
L=\big\{(q_1,q_2,3aq_1^2+2bq_1q_2+cq_2^2,bq_1^2+2cq_1q_2+3dq_2^2)\mid(q_1,q_2)\in\R^2\big\}
\end{equation}
in $\R^4$. Associated are the set $\Delta$ of critical points and the set $\Sigma$ of critical values of the map $\Psi:TL\to\R^4$ resp.~$\Psi:\R^2\times\R^2\to\R^2\times\R^2$.

\begin{proposition}\label{prop:abcd_example_and_wall} 
The following statements are equivalent:
\begin{enumerate}   \renewcommand{\theenumi}{\roman{enumi}}\renewcommand{\labelenumi}{\rm (\theenumi)} 
\item The coefficients $a,b,c,d$ of $F$ satisfy the inequality 
\begin{equation}\label{eq:no_wall_condition_cubic_examples}
18abcd+b^2c^2>4ac^3+4b^3d+27a^2d^2.
\end{equation}
\item $\Delta=L$, where $L\subset TL$ denotes the zero-section, and then, in particular, $\Sigma=L\subset\R^4$.
\item Every regular value of the map $\Psi:TL\to\R^4$ has multiplicity 2.
\end{enumerate}
The opposite inequality 
\begin{equation}\label{eq:no_wall_condition_cubic_examples_other_way_round}
18abcd+b^2c^2<4ac^3+4b^3d+27a^2d^2,
\end{equation}
is equivalent to the multiplicity of $\Psi$ being $0$ and $4$. In this case, $\Delta\neq L$.
\end{proposition}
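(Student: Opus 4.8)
The plan is to reduce both the wall computation and the multiplicity count to the single binary quadratic form $\det(t_1A_1+t_2A_2)$ built from the two symmetric matrices
\[
A_1=\begin{pmatrix} 3a & b\\ b & c\end{pmatrix},\qquad A_2=\begin{pmatrix} b & c\\ c & 3d\end{pmatrix},
\]
which represent the two components of $\nabla F$, i.e.\ $A_1w\cdot w=\partial_{q_1}F(w)$ and $A_2w\cdot w=\partial_{q_2}F(w)$. The starting observation is that, since $F$ is cubic, the Hessian is linear in its argument: $\nabla^2F(w)=2(w_1A_1+w_2A_2)$. By Lemma \ref{lem:description_of_wall_Lagrangian_case} together with the cubic identity $\nabla^3F(q)[\zeta,w]=\nabla^2F(w)\zeta$, a point $(q,w)$ lies in $\Delta$ precisely when $\nabla^2F(w)$ is singular, i.e.\ when $\det(w_1A_1+w_2A_2)=0$. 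Hence $\Delta=L$ (the zero section $\{w=0\}$) if and only if the form $(t_1,t_2)\mapsto\det(t_1A_1+t_2A_2)$ vanishes only at the origin, i.e.\ is definite; in that case $\Sigma=\Psi(\{w=0\})=L$. This gives (ii) $\Leftrightarrow$ definiteness.

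For the multiplicity, the plan is to invoke Lemma \ref{lem:preimage_Psi_for_cubic_generating_function}: the preimages $(q,w)$ of a target $(Q,W)$ (with $q=Q-w$) are in bijection with the solutions $w$ of $\nabla F(w)=\nabla F(Q)-W$. Written out, this is exactly the central-conic system \eqref{eqn:two_central_conics} with the matrices $A_1,A_2$ above and right-hand side $(r_1,r_2)=\nabla F(Q)-W$. As $W$ ranges over $\R^2$ the vector $(r_1,r_2)$ ranges over all of $\R^2$, and the critical points of $w\mapsto\nabla F(w)$ are exactly the $w$-projections of $\Delta$, so regular values of $\Psi$ correspond to regular values of this system (where the solution count is the multiplicity). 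Lemma \ref{lm:central_conics} then applies: the number of solutions is $2$ for every $(r_1,r_2)\neq(0,0)$ exactly when the pencil $t_1A_1+t_2A_2$ contains no degenerate matrix, and is generically $0$ or $4$ otherwise. Since ``no degenerate member of the pencil'' is the same definiteness condition as above, this yields (iii) $\Leftrightarrow$ definiteness, and hence (ii) $\Leftrightarrow$ (iii).

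It remains to identify definiteness with \eqref{eq:no_wall_condition_cubic_examples}. A direct expansion gives
\[
\det(t_1A_1+t_2A_2)=(3ac-b^2)t_1^2+(9ad-bc)t_1t_2+(3bd-c^2)t_2^2,
\]
whose discriminant is $(9ad-bc)^2-4(3ac-b^2)(3bd-c^2)=-3\big(18abcd+b^2c^2-4ac^3-4b^3d-27a^2d^2\big)$. Thus the form is definite (discriminant negative) exactly when \eqref{eq:no_wall_condition_cubic_examples} holds, proving (i) $\Leftrightarrow$ (ii) $\Leftrightarrow$ (iii); here one recognizes $18abcd+b^2c^2-4ac^3-4b^3d-27a^2d^2$ as the discriminant of the binary cubic $F$, so the condition simply asks that $F$ have three distinct real roots. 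When the opposite strict inequality \eqref{eq:no_wall_condition_cubic_examples_other_way_round} holds the form is indefinite, hence has a nonzero real zero: the pencil then contains degenerate members, $\Delta\neq L$, and by Lemma \ref{lm:central_conics} the multiplicity is $0$ and $4$.

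The hard part is the degeneracy hypotheses of Lemma \ref{lm:central_conics}, which requires $A_1,A_2$ nondegenerate and non-proportional. In the region \eqref{eq:no_wall_condition_cubic_examples} the form is definite, so its $t_1^2$- and $t_2^2$-coefficients $\det A_1=3ac-b^2$ and $\det A_2=3bd-c^2$ are nonzero and $A_1,A_2$ cannot be proportional (which would force a perfect square, hence a vanishing discriminant); there the lemma applies directly. In the opposite region $\det(t_1A_1+t_2A_2)$ is a nonzero indefinite form, so $A_1$ or $A_2$ may individually be degenerate, but the one-parameter family of pencil members then contains two nondegenerate non-proportional matrices; since the solution set of \eqref{eqn:two_central_conics} depends only on the pencil (a change of basis merely relabels $(r_1,r_2)$ by an invertible linear map), I would apply the lemma to such a pair. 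The only case escaping this argument is $\det(t_1A_1+t_2A_2)\equiv0$, which forces the cubic discriminant to vanish and hence lies on the boundary between the two strict inequalities, outside the scope of the statement.
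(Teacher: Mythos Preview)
Your argument is correct and follows essentially the same route as the paper: both reduce (i)$\Leftrightarrow$(ii) to the determinant of the Hessian matrix (equivalently, to the definiteness of the binary quadratic form $(3ac-b^2)w_1^2+(9ad-bc)w_1w_2+(3bd-c^2)w_2^2$) via Lemma~\ref{lem:description_of_wall_Lagrangian_case}, and both reduce (ii)$\Leftrightarrow$(iii) to Lemma~\ref{lm:central_conics} via Lemma~\ref{lem:preimage_Psi_for_cubic_generating_function}.

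Two minor points distinguish your write-up. First, your packaging through the pencil $t_1A_1+t_2A_2$ with $\nabla^2F(w)=2(w_1A_1+w_2A_2)$ is a clean unification: it makes transparent that the same definiteness condition governs both $\Delta=L$ and the applicability of Lemma~\ref{lm:central_conics}. Second, you explicitly address the hypotheses of Lemma~\ref{lm:central_conics} (that $A_1,A_2$ be nondegenerate and non-proportional), which the paper's proof does not check. Your treatment of the definite case is clean; in the indefinite case your change-of-basis-in-the-pencil argument is correct, since replacing $(A_1,A_2)$ by $(\alpha A_1+\beta A_2,\gamma A_1+\delta A_2)$ for an invertible $\begin{psmallmatrix}\alpha&\beta\\\gamma&\delta\end{psmallmatrix}$ transforms the system \eqref{eqn:two_central_conics} into an equivalent one with the right-hand side replaced by an invertible linear image of $(r_1,r_2)$, preserving solution counts for generic right-hand sides. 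The remark that the boundary case corresponds to the vanishing of the discriminant of the cubic $F$ matches the paper's own observation in Remark~\ref{Bezout}.
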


\begin{remark}$ $
{\rm
\begin{itemize}  
\item Two of the examples in the previous section exemplify the two cases in the proposition. Indeed, $F=q_1^2q_2+q_1q_2^2$ corresponds to $b=c=1$ and $a=d=0$ and inequality \eqref{eq:no_wall_condition_cubic_examples} becomes $1>0$. Moreover, $F=q_1^3+q_2^3$ corresponds to $a=d=1$ and $b=c=0$ and inequality \eqref{eq:no_wall_condition_cubic_examples_other_way_round} becomes $0<27$. 
\item The second example, the affine Lagrangian subspace, corresponds basically to the case $F=0$, i.e.,~neither inequality holds. In fact, the linear and quadratic terms in any polynomial generating function can be removed by applying an affine symplectic transformation to the Lagrangian submanifold. Therefore, the cubic case is the first interesting case and we may assume without loss of generality that the generating function is homogeneous as in \eqref{eq:homogeneous_cubic_generating_function_dim_4}. 

\item If inequality \eqref{eq:no_wall_condition_cubic_examples} holds then, after one initial choice, the outer symplectic billiard correspondence defines a map as follows. Every point $Z\in\R^{2n}\setminus L$ has precisely two preimages under $\Psi$. If we  write $Z=X+W$ with $X\in L$ and $W\in T_{X}L$, then the outer symplectic billiard map maps $Z=X+W$ to $Z'=X-W$. Now, $Z'=X-W=X'+W'$ for a unique choice of $X'\in L$ and $W'\in T_{X'}L$. Thus, $Z'$ is mapped to $X'-W'$, etc.

\end{itemize}
} 
\end{remark}

Let us prove Proposition \ref{prop:abcd_example_and_wall}.

\begin{proof}
As explained above, we always have an inclusion of the zero-section $L\subset\Delta\subset TL$ which, in terms of the generating function $F$, becomes $L=\{w=(w_1,w_2)=0\}\subset\Delta\subset\R^2\times\R^2$. Let us list again the derivatives of $F$:
\begin{equation}\nonumber
\begin{aligned}
\nabla F(q)&=(3aq_1^2+2bq_1q_2+cq_2^2,bq_1^2+2cq_1q_2+3dq_2^2),\\[2ex]
\nabla^2F(q)w
&=
\begin{pmatrix}
6aq_1+2bq_2 & 2bq_1+2cq_2\\
2bq_1+2cq_2 & 2cq_1+6dq_2
\end{pmatrix}
\begin{pmatrix}
w_1\\
w_2
\end{pmatrix},\\[2ex]
\nabla^3F(q)[w,\zeta]&=
\begin{pmatrix}
6aw_1+2bw_2 & 2bw_1+2cw_2\\
2bw_1+2cw_2 & 2cw_1+6dw_2
\end{pmatrix}
\begin{pmatrix}
\zeta_1\\
\zeta_2
\end{pmatrix}
=\nabla^2F(w)\zeta.
\end{aligned}
\end{equation}
We recall from Lemma \ref{lem:description_of_wall_Lagrangian_case} that $\Delta$ is the set of points $(q,w)\in\R^2\times\R^2$ such that the linear map $\R^2\ni\zeta\mapsto \nabla^3F(q)[\zeta,w]\in\R^2$ is singular. Its determinant, dropping a factor of 4, is
\begin{equation}\label{eq:abcd_quadratic_polynomial_w1_w2}
(3ac-b^2)w_1^2+(9ad-bc)w_1w_2+(3bd-c^2)w_2^2.
\end{equation}
Of course, $w_1=w_2=0$ is a solution, confirming again $\{w=0\}\subset\Delta$. If $w_2\neq0$, we may reduce \eqref{eq:abcd_quadratic_polynomial_w1_w2} to the quadratic equation for $z=\frac{w_1}{w_2}$ given by
\begin{equation*} 
\begin{aligned}
(3ac-b^2)z^2+(9ad-bc)z+(3bd-c^2)=0.
\end{aligned}
\end{equation*}
This quadratic equation has no real solution if and only if
\begin{equation*}\label{eq:abcd_quadratic_polynomial_inequality}
(9ad-bc)^2<4(3ac-b^2)(3bd-c^2).
\end{equation*}
Expanding and reordering the terms gives inequality \eqref{eq:no_wall_condition_cubic_examples}. The case $w_1\neq0$ leads to the same inequality. Thus, we proved that inequality \eqref{eq:no_wall_condition_cubic_examples} is equivalent to $\Delta=L$, that is, (i) $\Longleftrightarrow$ (ii). 

\medskip

To show the equivalence of (i), resp.~(ii), to statement (iii), we recall from Lemma \ref{lem:preimage_Psi_for_cubic_generating_function} that finding a preimage of $(Q,W)\in\R^2\times\R^2$ under $\Psi$ is equivalent to finding $(q,w)\in\R^2\times\R^2$ satisfying
\begin{equation}\nonumber
\nabla F(w)=\nabla F(Q)-W
\end{equation}
with $q=Q-w$. For $F=aq_1^3+bq_1^2q_2+cq_1q_2^2+dq_2^3$ this leads to the two quadratic equations
\begin{equation}\label{eqn:intersection_central_conics}
\left\{\;
\begin{aligned}
3aw_1^2+2bw_1w_2+cw_2^2&=r_1,\\
bw_1^2+2cw_1w_2+3dw_2^2&=r_2,
\end{aligned}
\right.
\end{equation}
where $(r_1,r_2)=\nabla F(Q)-W\in\R^2$. In particular, we may apply Lemma \ref{lm:central_conics}.

Note that, by varying $(Q,W)\in\R^2\times\R^2$ arbitrarily, the value of $(r_1,r_2)=\nabla F(Q)-W$ ranges over all of $\R^2$. 

We claim that $(Q,W)$ is a regular value of $\Psi$ if and only if the two conics in \eqref{eqn:intersection_central_conics} intersect transversely. Indeed, the conics are tangent when their normals are proportional, that is, when the matrix 
\begin{equation} \label{eq:samemat}
\begin{pmatrix}
3aw_1+bw_2 & bw_1+cw_2\\
bw_1+cw_2 & cw_1+3dw_2
\end{pmatrix}
\end{equation}
is degenerate. As we saw above, this condition describes the critical points of the map $\Psi$. 

According to Lemma \ref{lm:central_conics}, the system (\ref{eqn:intersection_central_conics}) has a constant number of solutions (which is equal to two) if and only if matrix (\ref{eq:samemat}) is nondegenerate for all $(w_1,w_2) \neq (0,0)$, and as we know, this is equivalent to (ii) and to (i). 

The case of the opposite inequality (\ref{eq:no_wall_condition_cubic_examples_other_way_round}) corresponds to the three other cases of Lemma \ref{lm:central_conics}, where the number of solution for a generic right-hand side is either zero or four.
\proofend
\end{proof}

\subsection{General Lagrangian submanifolds}

We begin by considering a cubic generating function $F:\R^n\to\R$ in arbitrary dimensions and the associated Lagrangian submanifold $L\subset\R^{2n}$.

\begin{proposition}\label{prop:ruled_means_nabla_F_w_w_0}$ $
\begin{enumerate}  
\item The following three assertions are equivalent. 
\begin{enumerate}  
\item $L$ is the union of parallel lines. In particular, $L$ is ruled.
\item $L$ contains one line, i.e., in terms of $\Psi$
	\begin{equation}\nonumber
	\begin{aligned}
	\exists q\in\R^n\;\exists w\in\R^n\setminus\{0\}\;\forall t\in\R\;:\; \Psi(q,tw)\subset L.
	\end{aligned}
	\end{equation}  
\item There exists $w\in\R^n\setminus\{0\}$ with $\nabla^2F(w)w=0$.
\end{enumerate}
\item Assume that $n=2$. Then there exists a $w\in\R^2\setminus\{0\}$ with $\nabla^2F(w)w=0$ if and only if
\begin{equation}\nonumber
18abcd+b^2c^2=4ac^3+4b^3d+27a^2d^2
\end{equation}
and
$$
b^2>3ac,\ c^2>3bd,
$$
where $F=aq_1^3+bq_1^2q_2+cq_1q_2^2+dq_2^3$ is as in the previous section.
\end{enumerate}
\end{proposition}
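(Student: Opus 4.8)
For the first part the plan is to run the cycle (a) $\Rightarrow$ (b) $\Rightarrow$ (c) $\Rightarrow$ (a). The implication (a) $\Rightarrow$ (b) is immediate, since a union of lines contains a line. The core computation, giving (b) $\Leftrightarrow$ (c), is the exact second-order Taylor expansion of the quadratic map $\nabla F$: because $F$ is homogeneous of degree three and $\nabla^3F[w,w]=\nabla^2F(w)w$ (recorded in the excerpt),
\[
\nabla F(q+tw)=\nabla F(q)+t\,\nabla^2F(q)w+\tfrac{t^2}{2}\,\nabla^2F(w)w .
\]
Since $\Psi(q,tw)=(q+tw,\nabla F(q)+t\,\nabla^2F(q)w)$, the point $\Psi(q,tw)$ lies on $L$ for every $t$ precisely when the quadratic term vanishes, that is when $\nabla^2F(w)w=0$; as this condition does not involve $q$, it proves (b) $\Leftrightarrow$ (c) directly. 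For (c) $\Rightarrow$ (a) I would fix $w\neq0$ with $\nabla^2F(w)w=0$ and observe that for every $q_0$ the affine line $t\mapsto\Psi(q_0,tw)$ is contained in $L$; letting $q_0$ range over a linear complement of $\R w$ foliates $L$ into these lines, all of common base direction $w$, which is assertion (a).

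For the second part I would first apply Euler's identity $\nabla^2F(w)w=2\nabla F(w)$ (already used in the proof of Lemma~\ref{lem:preimage_Psi_for_cubic_generating_function}) to rewrite $\nabla^2F(w)w=0$ as $\nabla F(w)=0$. A nonzero real $w$ with $\nabla F(w)=0$ is exactly a common nonzero zero of the two binary quadratic forms
\[
F_{q_1}(w)=3aw_1^2+2bw_1w_2+cw_2^2,\qquad F_{q_2}(w)=bw_1^2+2cw_1w_2+3dw_2^2 .
\]
The plan is to detect such a common zero by elimination: dehomogenizing with $z=w_1/w_2$ and forming the Sylvester resultant of $3az^2+2bz+c$ and $bz^2+2cz+3d$ produces, up to a nonzero constant factor, the discriminant of the binary cubic $F$, namely $18abcd+b^2c^2-4ac^3-4b^3d-27a^2d^2$. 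Its vanishing is the asserted equality and is equivalent to $F$ having a repeated linear factor, i.e.\ to the existence of a common zero over $\C$.

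It then remains to upgrade a complex common zero to a genuine real $w$ and to pin down the two inequalities. For reality I would note that a real binary cubic with vanishing discriminant cannot have a repeated complex factor --- its conjugate would be a second repeated factor, forcing degree at least four --- so the repeated factor, hence the common zero $w$, is automatically real. The inequalities $b^2>3ac$ and $c^2>3bd$ record that each form $F_{q_i}$ splits into two \emph{distinct} real lines, since $\mathrm{disc}\,F_{q_1}=4(b^2-3ac)$ and $\mathrm{disc}\,F_{q_2}=4(c^2-3bd)$; together with the equality they isolate the generic double-root configuration. I expect this bookkeeping to be the main obstacle: one must verify that the Sylvester determinant really equals the displayed discriminant, and then treat the degenerate cases (a triple factor, or a repeated factor aligned with a coordinate axis, where a partial derivative drops degree) so that reality and the boundary behaviour of the inequalities are correctly matched to the existence of $w$. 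As a consistency check and an alternative route, the very same symmetric matrices and the pencil determinant $(3ac-b^2)t_1^2+(9ad-bc)t_1t_2+(3bd-c^2)t_2^2$ governing Lemma~\ref{lm:central_conics} and Proposition~\ref{prop:abcd_example_and_wall} reappear here, and their degeneracy analysis reproduces the same inequalities.
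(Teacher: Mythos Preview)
Your treatment of Part 1 is correct and essentially identical to the paper's: both hinge on the exact Taylor expansion of $\nabla F$ and the observation that the resulting condition $\nabla^2F(w)w=0$ is independent of $q$.

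For Part 2 your resultant approach is again the paper's. Your additional step---rewriting $\nabla^2F(w)w=0$ as $\nabla F(w)=0$ via Euler, and then arguing that a nonzero real zero of $\nabla F$ exists precisely when the binary cubic $F$ has a repeated linear factor, which is automatically real---is correct and in fact sharper than the paper's reasoning. The trouble is that this proves the equality \emph{alone} is equivalent to the existence of $w$: the strict inequalities are superfluous. Concretely, $F=q_1^3$ has $b^2=3ac=0$, and $F=(q_1+q_2)^3$ has both $b^2=3ac$ and $c^2=3bd$, yet $w=(0,1)$ and $w=(1,-1)$ respectively satisfy $\nabla F(w)=0$. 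So your plan to ``match the boundary behaviour of the inequalities to the existence of $w$'' cannot succeed, because they do not match. The paper's own proof simply asserts that ``for the quadratic equations to have solutions we also need their discriminants to be positive'', which has the same defect: it ignores that a repeated root of a real cubic is forced to be real, and it mishandles the cases where a leading coefficient vanishes or a discriminant is zero. In short, your reality argument is the right one; it just shows that the statement, as printed with strict inequalities, is over-restrictive.
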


\begin{remark}
{\rm
Certainly, the statement that $L$ contains a line is independent of a generating function. We can rephrase this as the existence of a vector $W\in\R^{2n}$ such that $X+\R W\subset L$, and of course, then $W\in T_XL$.

If any of the equivalent assertions in 1.~is true, the equality $\Delta=TL$ follows from combining Lemma \ref{lem:description_of_wall_Lagrangian_case} with $\nabla^3F(q)[w,\zeta]=\nabla^2F(w)\zeta$. In particular, then $\Sigma=\Psi(\Delta)=\Psi(TL)$, and the outer symplectic billiard correspondence is not well-defined anywhere.
} 
\end{remark}

\begin{proof}
It is enough to prove the equivalence between 1.(b) and 1.(c) since the latter condition is actually independent of the point $q$. That is, then 1.(b) holds for all $q$ with the same vector $w$ which is 1.(a).

Let us assume that $L$ contains a line, i.e., we can find $q,w\in\R^n$ with $w\neq0$ such that
\begin{equation}\nonumber
\Psi(q,tw)=\big(q+tw,\nabla F(q)+t\nabla^2F(q)w\big)\in L\quad\forall t\in\R.
\end{equation}
In particular, we find $Q(t)\in\R^n$ such that
\begin{equation}\nonumber
\big(q+tw,\nabla F(q)+t\nabla^2F(q)w\big)=\big(Q(t),\nabla F(Q(t))\big).
\end{equation}
We immediately obtain $Q(t)=q+tw$. The Taylor expansion 
\begin{equation}\nonumber
\nabla F(q+tw)=\nabla F(q) +t \nabla^2F(q)w+\tfrac12t^2\nabla^3F(q)[w,w]
\end{equation}
is exact since $F$ is cubic, and it follows that
\begin{equation}\nonumber
\nabla^3F(q)[w,w]=0.
\end{equation}
Using again the equality $\nabla^3F(q)[w,w]=\nabla^2F(w)w$, we see that if $L$ contains a line then $\nabla^2F(w)w=0$ for some $w\neq0$. The above computation also shows that $\nabla^2F(w)w=0$ implies that $\Psi(q,tw)\in L$ for all $t\in\R$. This proves point 1.~in the proposition.

To show 2.~we recall from the proof of Proposition \ref{prop:abcd_example_and_wall} that
\begin{equation}\nonumber
\nabla^2F(w)w=\begin{pmatrix}
6aw_1+2bw_2 & 2bw_1+2cw_2\\
2bw_1+2cw_2 & 2cw_1+6dw_2
\end{pmatrix}
\begin{pmatrix}
w_1\\
w_2
\end{pmatrix}
=0
\end{equation}
are the two real equations
\begin{equation}\nonumber
\left\{\;
\begin{aligned}
3aw_1^2+2bw_1w_2+cw_2^2&=0\\
bw_1^2+2cw_1w_2+3dw_2^2&=0.
\end{aligned}
\right.
\end{equation}
Again $w_1=w_2=0$ is a solution and if $w_2\neq0$ this systems reduces to the quadratic equations in $z=\frac{w_1}{w_2}$ given by
\begin{equation}\nonumber
\left\{\;
\begin{aligned}
3az^2+2bz+c&=0\\
bz^2+2cz+3d&=0.
\end{aligned}
\right.
\end{equation}
A classical fact is that two complex polynomials in one variable have a common root if and only if their resultant is zero. The resultant of the two equations from above is by definition
\begin{equation}\nonumber
\det
\begin{pmatrix}
3a & 0 & b & 0 \\
2b & 3a & 2c & b \\
c & 2b & 3d & 2c \\
0 & c  & 0 & 3d 
\end{pmatrix}=3\big(18abcd+b^2c^2-(4ac^3+4b^3d+27a^2d^2)\big).
\end{equation}
Since we are working over reals, for the quadratic equations to have solutions we also need their discriminants to be positive:
$$
b^2 - 3ac > 0,\ c^2 - 3bd > 0. 
$$ 
We conclude that $\nabla^2F(w)w=0$ for some $w\neq0$ is equivalent to the claimed equation $18abcd+b^2c^2=4ac^3+4b^3d+27a^2d^2$ and the inequalities $b^2>3ac, c^2>3bd$.
\proofend 
\end{proof}


\begin{remark} \label{Bezout}
{\rm
The expression
$$
D=18abcd+b^2c^2-4b^3d-4ac^3-27a^2d^2
$$
has appeared several times by now. Let us comment on this fact. In the following discussion we work over the complex numbers.

The expression  $D$ is the discriminant of the cubic polynomial $az^3+bz^2+cz+d$, hence $D=0$ if and only if this polynomial has a multiple root. Accordingly, the cubic form $F(x,y)=ax^3+bx^2y+cxy^2+dy^3$ is a product of three linear forms, and $D=0$ if and only if $F$ can be written as 
$F=f^2 g$, where $f$ and $g$ are linear forms.

In the proof of Proposition \ref{prop:ruled_means_nabla_F_w_w_0}, we dealt with non-zero solutions of the equation $\nabla^2F (w) w =0$, where $w=(x,y)$. Recall that $F$ is a cubic form, and its partial derivatives are quadratic forms. By Euler's formula, 
$$
\left\{\;
\begin{aligned}
x F_{xx}(x,y)+yF_{xy}(x,y)&=2F_x(x,y)\\
xF_{xy}(x,y)+yF_{yy}(x,y)&=2F_y(x,y).
\end{aligned}
\right.
$$
Hence, 
the system of equations $\nabla^2F(x,y)(x,y)=0$ is equivalent to $F_x(x,y)=F_y(x,y)=0$.

Now, if $F=f^2g$, then $F_x(x,y)=F_y(x,y)=0$ for all $(x,y)$ such that $f(x,y)=0$, i.e., on the line $\{f=0\}$. 
On the other hand, if $F=fgh$ with non-proportional linear factors, then the vector equation $\nabla F(x,y)=0$  
has only the trivial solution $(x,y)=(0,0)$.  Indeed, in the latter case, a linear change of variables makes it possible to assume that
$$
f(x,y)=x,\ g(x,y)=y,\ h(x,y)=x+y.
$$
Then the system becomes $2xy+y^2=0=x^2+2xy$. Each equation describes a union of two lines through the origin, and all four lines are different. Hence, the only solution is the origin. 

In the proof of Proposition \ref{prop:abcd_example_and_wall}, we wanted to know when the quadratic equation $\det \nabla^2F=F_{xx}F_{yy}-F_{xy}^2=0$ had no real solutions, other than $(0,0)$. For that, consider the discriminant of the quadratic equation we obtain from $F_{xx}F_{yy}-F_{xy}^2=0$ by considering the variable $y/x$. There are no real solutions, other than $(0,0)$, if and only if this discriminant is negative. In the proof of Proposition \ref{prop:abcd_example_and_wall} we show that this discriminant is, up to a multiplicative factor, equal to $D$.

Let us add that if $F=f^2g$, then 
$$
F_{xx}F_{yy}-F_{xy}^2 = -4 f^2 (f_xg_y-f_yg_x)^2.
$$
That is, in this case, $\det \nabla^2F$, considered as a quadratic polynomial in $y/x$, indeed has a double root given by the equation $f(x,y)=0$. 



}
\end{remark}

The rest of the section deals with general Lagrangian submanifolds in $\R^{2n}$. We stress that, in the following, we do \textbf{not} assume that the Lagrangian submanifold is given by a cubic generating function. 

\begin{theorem}
Let $L\subset\R^{2n}$ be a Lagrangian submanifold and consider the set $\Delta\subset TL$ of critical points of the map $\Psi:TL\to\R^{2n}$, $\Psi(X,W)=X+W$. We assume that $\Delta$ intersects some tangent fiber $T_XL$ only in the zero section, i.e., there exists $X\in L$ with
\begin{equation}\nonumber
\Delta\cap T_XL=\{0\}.
\end{equation}
Then $\dim L =1,2$.
\end{theorem}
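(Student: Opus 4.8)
The plan is to localize everything at the special point $X\in L$ and reduce the statement to a question about a single symmetric cubic form. First I would choose a linear symplectic chart in which $L$ is, near $X$, the graph $\{(q,\nabla F(q))\}$ of a generating function, with $X$ corresponding to $q=q_0$. By Lemma \ref{lem:description_of_wall_Lagrangian_case}, the fiber slice $\Delta\cap T_XL$ is exactly the set of $w$ for which $\zeta\mapsto\nabla^3F(q_0)[\zeta,w]$ is singular, so the hypothesis $\Delta\cap T_XL=\{0\}$ says precisely that the symmetric operator $A(w):=\nabla^3F(q_0)[\,\cdot\,,w]:\R^n\to\R^n$ is invertible for every $w\neq0$ (here $n=\dim L$). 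Since $\nabla^3F(q_0)=D_XG$ is the intrinsic totally symmetric $3$-tensor coming from the Lagrangian Gauss map, this reformulation is coordinate-free. Encoding the tensor as the cubic form $\phi(w):=D_XG(w,w,w)$ on $T_XL\cong\R^n$, one has $\mathrm{Hess}\,\phi(w)=6A(w)$ and $\nabla\phi(w)=3A(w)w$; in particular $\nabla\phi(w)\neq0$ for all $w\neq0$, because $A(w)$ is invertible and $w\neq0$. Thus the whole problem becomes: a cubic form $\phi$ on $\R^n$ whose Hessian is nonsingular off the origin forces $n\le2$.

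The heart of the argument is topological. I would consider the normalized gradient map
\[
g:S^{n-1}\to S^{n-1},\qquad g(w)=\frac{\nabla\phi(w)}{|\nabla\phi(w)|},
\]
which is well defined by the previous paragraph. Two features drive the proof. First, $g$ is \emph{even}: since $\nabla\phi$ is homogeneous of (even) degree $2$, we have $\nabla\phi(-w)=\nabla\phi(w)$, hence $g(-w)=g(w)$. Second, $g$ is a local diffeomorphism everywhere, which is the main technical step. I would establish it as follows: for $v\in T_wS^{n-1}=w^\perp$, differentiating $g$ gives $dg_w(v)=\tfrac1{|\nabla\phi(w)|}\,\Pi_{\nabla\phi(w)^\perp}\big(\mathrm{Hess}\,\phi(w)\,v\big)$, where $\Pi$ denotes orthogonal projection. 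If $dg_w(v)=0$ then $A(w)v\in\R\,\nabla\phi(w)=\R\,A(w)w$, so $A(w)(v-cw)=0$ for some scalar $c$; invertibility of $A(w)$ forces $v=cw$, and since $v\perp w$ with $w\neq0$ we get $c=0$, i.e.\ $v=0$. Hence $dg_w$ is injective between spaces of equal dimension $n-1$, so it is an isomorphism.

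Finally I would package these two features. Being a local diffeomorphism from the compact connected $S^{n-1}$ to the connected $S^{n-1}$, the map $g$ is a finite covering. If $n\ge3$ then $S^{n-1}$ is simply connected, so the covering has a single sheet and $g$ is a homeomorphism, in particular injective; this contradicts $g(-w)=g(w)$ with $w\neq-w$. Therefore $n\le2$, that is $\dim L\in\{1,2\}$. (Both values genuinely occur: $n=1$ escapes the argument because $S^0$ is disconnected, and $n=2$ because $S^1$ is not simply connected; indeed the cubic example $F=q_1^2q_2+q_1q_2^2$ of Section \ref{subsect:3examples} realizes $n=2$.) I expect the verification that $g$ is a local diffeomorphism — cleanly separating the radial direction $w$ from the tangential directions $w^\perp$ using the invertibility of $A(w)$ — to be the only delicate point; the rest is the bookkeeping reduction to $\phi$ together with standard covering-space theory.
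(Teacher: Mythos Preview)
Your argument is correct and follows the paper's reduction verbatim up to the point where the symmetric bilinear map $f(\zeta,w)=\nabla^3F(q_0)[\zeta,w]$ is identified as having no zero divisors. Your map $g(w)=\nabla\phi(w)/|\nabla\phi(w)|$ is exactly $f(w,w)/|f(w,w)|$, i.e., the lift to $S^{n-1}$ of Hopf's map $\bar h:\RP^{n-1}\to S^{n-1}$ that the paper invokes. The difference is only in how the topological contradiction is reached: the paper quotes Hopf's theorem and reproduces his one-line algebraic trick---if $f(x,x)=t^2f(y,y)$ then $f(x+ty,x-ty)=0$, forcing $x=\pm ty$---to show $\bar h$ is injective, hence an embedding $\RP^{n-1}\hookrightarrow S^{n-1}$; you instead compute $dg_w$ and use the invertibility of $A(w)$ to show $g$ is a local diffeomorphism, hence a covering, which must be trivial when $S^{n-1}$ is simply connected. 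Both routes land on the same impossibility for $n\ge3$. Hopf's injectivity argument is shorter and uses only the bare no-zero-divisor hypothesis; your differential-topological version is a bit more work but is self-contained and makes the role of the Hessian condition $\det A(w)\neq0$ transparently geometric.
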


\begin{remark}
{\rm
For $\dim L=2$, Example 3.2 / 3.5 from \cite{FT2}, i.e., the first example in Section \ref{subsect:3examples}, is a Lagrangian submanifold having the property $\Delta\cap TL=$ zero section, i.e., \textbf{all} tangent fibers intersect $\Delta$ only in zero. Any embedded closed curve with strictly positive curvature is an example with $\dim L=1$ and the same property.
}
\end{remark}

\begin{proof}
The assumption of the theorem is clearly local and we therefore assume without loss of generality that $L$ is globally given by a generating function $F:\R^n\to\R$.

In terms of the generating function the assumption becomes that there exists $q_0\in\R^n$ such that
\begin{equation}\nonumber
\Delta\cap T_{(q_0,\nabla F(q_0))}L=\{0\}.
\end{equation}
We recall from Lemma \ref{lem:description_of_wall_Lagrangian_case} that $(q,w)\in\Delta$ if and only if $\R^n\ni\zeta\mapsto \nabla^3F(q)[\zeta,w]\in\R^n$ does not have full rank. Therefore, by assumption we know that
\begin{equation}\nonumber
\R^n\ni\zeta\mapsto \nabla^3F(q_0)[\zeta,w]\in\R^n
\end{equation}
has full rank for all $w\in\R^n\setminus\{0\}$. This means that the symmetric bilinear map
\begin{equation}\nonumber
\begin{aligned}
\R^n\times\R^n&\to\R^n\\
(\zeta,w)&\mapsto\nabla^3F(q_0)[\zeta,w]
\end{aligned}
\end{equation}
has no zero-divisors. Indeed, assume that there is $(\zeta,w)$ with $\nabla^3F(q_0)[\zeta,w]=0$. If $w\neq0$ then, by our assumption, $\zeta=0$.

In 1940 Heinz Hopf gave in \cite{Hopf} a very elegant argument that the existence of a symmetric (or commutative), bilinear map $\R^n\times\R^n\to\R^n$ without zero divisors implies $n=1,2$, see \cite{Ja} or \cite{Ha} for a modern treatment. For convenience, we present Hopf's argument below. \proofend 
\end{proof}

\begin{theorem}[H. Hopf]
Let $f:\R^n\times\R^n\to\R^{n+k}$ be bilinear, symmetric, and free of zero-divisors then there exists an embedding $\bar h:\RP^{n-1}\to S^{n+k-1}$.
\end{theorem}

\begin{proof}
The following proof is essentially contained in \cite{Hopf}, see also \cite{Ja}. Consider the map
\begin{equation}\nonumber
\begin{aligned}
h:S^{n-1}\times S^{n-1}&\to S^{n+k-1}\\
(x,y)&\mapsto \frac{f(x,y)}{||f(x,y)||}.
\end{aligned}
\end{equation}
This map is well-defined since $f$ is free of zero-divisors. Since $f$ is bilinear, it induces a map
\begin{equation}\nonumber
\begin{aligned}
\bar h:\RP^{n-1}&\to S^{n+k-1}\\
[x]&\mapsto \bar h([x]):=h(x,x).
\end{aligned}
\end{equation}
Assume that $\bar h([x])=\bar h([y])$, i.e., $f(x,x)=t^2f(y,y)$ for some $t\in\R\setminus\{0\}$. Then, using bilinearity and symmetry, we see
\begin{equation}\nonumber
\begin{aligned}
f(x+ty,x-ty)=f(x,x)-t^2f(y,y)=0.
\end{aligned}
\end{equation}
Since $f$ does not have zero divisors we see that $x=\pm ty$, $t\in\R\setminus\{0\}$, and conclude $[x]=[y]$. In other words, $\bar h$ is injective and thus defines the desired embedding $h:\RP^{n-1}\to S^{n+k-1}$. \proofend
\end{proof}

\begin{corollary}
Let $f:\R^n\times\R^n\to\R^{n}$ be bilinear, symmetric, and free of zero-divisors then $n=1,2$.
\end{corollary}

\begin{proof}
There exists an embedding $h:\RP^{n-1}\to S^{n-1}$ if and only if the dimension satisfies $n=1,2$. \proofend
\end{proof}

\subsection{Integrability of quadratic Lagrangian submanifolds}

We directly start with the main theorem of the section title.

\begin{theorem} \label{thm:int}
Let $L\subset\R^{2n}$ be a Lagrangian submanifold admitting a cubic generating function $F:\R^n\to\R$. Then the outer symplectic billiard relation associated with $L$ is completely integrable. More precisely, the $n$ components of the map $\R^{2n}\ni(Q,P)\mapsto P-\nabla F(Q)\in\R^n$ are Poisson commuting and invariant under the outer symplectic billiard relation.
\end{theorem}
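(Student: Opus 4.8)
The plan is to establish the two assertions of the theorem—Poisson commutativity and invariance—directly for the $n$ scalar functions $I_i(Q,P) := P_i - F_i(Q)$, where $F_i = \partial F/\partial q_i$, and then observe that their manifest functional independence upgrades this to Liouville integrability. Throughout I would use the description of $L$ by its generating function, $L=\{(q,\nabla F(q))\mid q\in\R^n\}$ with $\omega=\sum_i dp_i\wedge dq_i$, and the reconstruction of the correspondence from Section~\ref{sec:Lagrangian_case_the_wall}: two points are related exactly when they have the form $X-W$ and $X+W$ for some $X\in L$ and $W\in T_XL$.

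First I would dispose of Poisson commutativity, which in fact uses nothing about the degree of $F$. With $I_i=p_i-F_i(q)$ one has $\partial_{q_k}I_i=-F_{ik}(q)$ and $\partial_{p_k}I_i=\delta_{ik}$, so that, up to an overall sign irrelevant here,
$$
\{I_i,I_j\}=\sum_k\big(\partial_{q_k}I_i\,\partial_{p_k}I_j-\partial_{p_k}I_i\,\partial_{q_k}I_j\big)=F_{ji}-F_{ij}=0
$$
by symmetry of the Hessian. The differentials $dI_i=dp_i-\sum_k F_{ik}\,dq_k$ are visibly linearly independent at every point, so the $I_i$ already form a maximal system of Poisson-commuting functions; it only remains to see that they are invariant.

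For invariance I would write $X=(q,\nabla F(q))\in L$ and $W=(w,\nabla^2F(q)w)\in T_XL$ and evaluate $I$ on the two related points. Their $Q$-components are $q\pm w$ and their $P$-components are $\nabla F(q)\pm\nabla^2F(q)w$, so the claim $I(X+W)=I(X-W)$ reduces to the single vector identity $\nabla F(q+w)-\nabla F(q-w)=2\,\nabla^2F(q)w$. The main point—really the only point—is that this holds \emph{exactly} precisely because $F$ is cubic: the Taylor expansion of $\nabla F$ about $q$ terminates, with $\nabla^4F\equiv 0$, so the odd-in-$w$ part of the left-hand side collapses to its lone linear term $2\,\nabla^2F(q)w$. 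From the same expansion one reads off $I(X\pm W)=-\tfrac12\,\nabla^3F(q)[w,w]$, a quantity even in $W$, which makes the invariance transparent.

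I do not expect a serious obstacle; the entire content sits in the cubic hypothesis, which is exactly what forces the third-order remainder of $\nabla F$ to be even in $W$ and hence to cancel between the two related points. The only care needed is organizational: treating the correspondence symmetrically in $\pm W$ rather than as a map (so that invariance is genuinely a statement about related pairs), and recording that the conserved quantity is $\R^n$-valued, so that ``complete integrability in the Liouville sense'' means precisely the $n$ commuting, independent components $I_1,\dots,I_n$ exhibited above.
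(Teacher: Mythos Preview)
Your proof is correct and follows essentially the same approach as the paper: the Poisson bracket computation is identical (symmetry of the Hessian), and the invariance argument is the same exact-Taylor-expansion observation that $I(X\pm W)=-\tfrac12\nabla^3F(q)[w,w]$ is even in $W$. Your explicit remark on functional independence of the $I_i$ is a small addition the paper leaves implicit.
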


\begin{proof}
The functions
\begin{equation}\nonumber
G_1(Q, P):=P_1-\frac{\partial F}{\partial Q_1}(Q),\;\cdots\;,G_n(Q,P):=P_n-\frac{\partial F}{\partial Q_n}(Q)
\end{equation}  
Poisson commute since the second partial derivatives of $F$ commute, indeed
\begin{equation}\nonumber
\begin{aligned}
dG_j(X_{G_k})&=\left[dP_j-\sum_{r=1}^n \frac{\partial^2F}{\partial Q_r\partial Q_j}dQ_r\right](X_{G_k})\\
&=\left[dP_j-\sum_{r=1}^n \frac{\partial^2F}{\partial Q_r\partial Q_j}dQ_r\right]\left(\partial_{Q_k}+\sum_{s=1}^n \frac{\partial^2F}{\partial Q_s\partial Q_k}\partial_{P_s}\right)\\
&=-\frac{\partial^2F}{\partial Q_k\partial Q_j}+\frac{\partial^2F}{\partial Q_j\partial Q_k}=0.
\end{aligned}
\end{equation}
It remains to show that these functions are invariant under the outer symplectic billiard relation. For that we consider two points $A,B\in\R^{2n}$ in outer symplectic billiard relation, that is, there is $X\in L$ and $W\in T_XL$ such that
\begin{equation*}
A=X+W=\Psi(X,W)\quad\text{and}\quad B=X-W=\Psi(X,-W),
\end{equation*}
which, in terms of the generating function, becomes
\begin{equation*}
\begin{aligned}
A=(q+w,\nabla F(q)+\nabla^2F(q)w)\quad\text{and}\quad B=(q-w,\nabla F(q)-\nabla^2F(q)w).
\end{aligned}
\end{equation*}
Using the (exact) Taylor expansion 
$$\nabla F(q\pm w)=\nabla F(q)\pm\nabla^2F(q)w+\frac12\nabla^3F(q)[w,w]$$
of the cubic function $F$, we see that the values of the map $P-\nabla F(Q)$ at the points $A$ and $B$ agree and are equal to
\begin{equation}\nonumber
\big(\nabla F(q)\pm\nabla^2F(q)w-\nabla F(q\pm w)\big)= -\tfrac12\nabla^3F(q)[w,w].
\end{equation}
This completes the proof. \proofend  
\end{proof}

\hrulefill

\bigskip
\noindent Peter Albers: 
Institute for Mathematics,
Heidelberg University,
69120 Heidelberg,
Germany;
peter.albers@uni-heidelberg.de\\

\noindent Ana Chavez Caliz:
Institute for Mathematics,
Heidelberg University,
69120 Heidelberg,
Germany;
anachavezcaliz@gmail.com\\

\noindent Serge Tabachnikov:
Department of Mathematics,
Pennsylvania State University,
University Park, PA 16802,
USA;
tabachni@math.psu.edu

\end{document}